\renewcommand{\leq}{\leqslant}
\renewcommand{\geq}{\geqslant}
\newtheoremstyle{mythm}
{\baselineskip}	
{\baselineskip}	
{}		
{}		
{\bf}	
{.\\ }		
{ }		
{}		
\theoremstyle{mythm}
\newtheorem{theorem}{Theorem}	
\newtheorem{lemma}[theorem]{Lemma}
\newtheorem{proposition}[theorem]{Proposition}
\newtheorem{corollary}[theorem]{Corollary}
\newtheorem{definition}[theorem]{Definition}
\newtheorem{question}{Question}
\newtheoremstyle{myclaim}
{.5\baselineskip}	
{.5\baselineskip}	
{}		
{}		
{\sc}	
{. }		
{ }		
{}		
\theoremstyle{myclaim}
\newcommand{\astfill}{\noindent\xleaders\hbox{$\ast$}\hfill\kern0pt}
\newcommand{\ds}{\displaystyle}
\renewcommand{\hfill}{\hspace*{\fill}}
\title{An Extension of the Baire Property}
\author[C. Caruvana]{Christopher Caruvana}
\author[R. R. Kallman]{Robert R. Kallman}
\date{\today}
\begin{document}

\begin{abstract}
The purpose of this paper is to define for every Polish space $X$ a class of sets, the $EBP(X)$-sets or the
extended Baire property sets, to work out many properties of the $EBP(X)$-sets and to show their usefulness in
analysis.  For example, a proper generalization of the Pettis Theorem is proved in this context that furnishes
a new automatic continuity result for Polish groups.  The name extended Baire property sets is reasonable
since $EBP(X)$ contains the Baire property sets $BP(X)$ and it is consistent with ZFC that the containment is
proper.

\end{abstract}

\maketitle

\tableofcontents

\section{Introduction}
The purpose of this paper is to define for every Polish space $X$ a class of sets, the $EBP(X)$ sets or the
extended Baire property sets, to work out many properties of the $EBP(X)$ sets and to show their usefulness in
analysis.  For example, a proper generalization of the Pettis Theorem is proved in this context that furnishes
a new automatic continuity result for Polish groups.  The name extended Baire property sets is reasonable
since $EBP(X)$ contains the Baire property sets $BP(X)$ and it is consistent with ZFC that the containment is
proper.

Recall some basic facts about the topology on the space of probability measures on a Polish space $X$.  Let
$\mathcal{M}(X)$ be the collection of all Borel probability measures on $X$ and let $C_{b}(X)$ be the collection of
all functions $f : X \mapsto \mathbb R$ which are continuous and bounded.  Endow $\mathcal{M}(X)$ with the coarsest
topology for which each map

\begin{displaymath}
\mu \mapsto \int f\ d\mu, \mathcal M(X) \mapsto \mathbb R,
\end{displaymath}

\noindent
where $f \in C_{b}(X)$, is continuous.  With this topology, $\mathcal{M}(X)$ is Polish by Theorem 6.5, p. 46,
\cite{partha} and the set of continuous Borel probability measures $\mathcal{M}_{c}(X)$ is a $G_{\delta}$
subset of $\mathcal{M}(X)$ by Theorem 8.1, p. 53, \cite{partha}. Moreover, with Corollary 8.1, p. 55,
\cite{partha}, $\mathcal{M}_{c}(X)$ is dense if and only if $X$ has no isolated points. From this, we see that
we can apply results from the theory of Polish spaces and Baire category to $\mathcal{M}(X)$ or
$\mathcal{M}_{c}(X)$.

The genesis for this paper came from general considerations about random measures and skepticism about their
utility. The notion of a subset of a Polish space $X$ having probability zero with respect to a small fixed
subset of $\mathcal{M}(X)$ is, in general, a rather useless one in cases in which $X$ is large
(infinite-dimensional or not locally compact). For example, every Borel probability measure is supported on a
$\sigma$-compact set. Possible exceptions of interest are measures motivated by physical considerations (e.g.,
Brownian motion) or invariance properties (Haar measure and related measures on locally compact groups or
quotient spaces). Heuristically, it is plausible to call a subset of $X$ ``{small}'' provided that it is null
with respect to ``{most}'' of $\mathcal{M}(X)$.  How can we make this notion of ``{most}'' probability
measures mathematically precise?  A probability measure on $\mathcal{M}(X)$ can then be chosen.  The objection
to this is that there is no natural choice for such a probability measure and, if $X$ is large and therefore
$\mathcal{M}(X)$ is large, such a probability measure is supported on a very thin subset of $\mathcal{M}(X)$.
It is therefore natural to turn to notions of category, a more natural and topologically invariant concept.

In this paper we introduce the concepts of residually measurable sets, residually null sets and $EBP(X)$-sets
and work out many of their associated properties.  The thesis of this paper is to prove that many theorems
that have $BP(X)$-sets in their hypotheses can be very naturally strictly generalized to have $EBP(X)$-sets in
their hypotheses. Dubins and Freedman \cite{dubins_freedman} proved two of the following results of this paper
for compact Polish $X$, though they apparently did not isolate the concepts of residually measurable sets,
residually null sets or $EBP(X)$-sets or work with these general concepts.

\section{Residually Null Sets}

\begin{definition}
Let $X$ be Polish.
Define $\mathcal N : \wp(X) \mapsto \wp(\mathcal M(X))$ by the rule
\[
	\mathcal N(A) = \{ \mu \in \mathcal M(X) : \mu^\ast(A) = 0 \}
\]
where $\mu^\ast$ is the outer measure induced by $\mu$.
We will call $\mathcal N(A)$ the annihilator of $A$.
\end{definition}

\begin{definition}
Let $X$ be a Polish space and let $A \subseteq X$. We say that $A$ is residually measurable if $A$ is
$\mu$-measurable (in the sense of Carath\'{e}odory) for a co-meager set of $\mu \in \mathcal M(X)$, $A$ is
residually null if $\mathcal N(A)$ is co-meager in $\mathcal M(X)$, or $A$ is co-residually null
if $X \setminus A$ is residually null.
\end{definition}

To witness the fact that the collection of residually null subsets of a Polish space form a $\sigma$-ideal, consider the following immediate facts about the annihilator.

\begin{lemma}
Let $X$ be Polish. If $A \subseteq B \subseteq X$, then $\mathcal N(B) \subseteq \mathcal N(A)$. Also, for a
countable family $\mathscr E$ of subsets of $X$, $\mathcal N(\bigcup \mathscr E) = \bigcap \{\mathcal N(E):E
\in \mathscr E\}$.
\label{lem:countable_additivity}
\end{lemma}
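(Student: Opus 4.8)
The plan is to deduce both assertions directly from two standard properties of the outer measure $\mu^\ast$ associated to any Borel probability measure $\mu$: monotonicity and countable subadditivity. Since these hold for every outer measure, the argument becomes purely set-theoretic once those facts are invoked, and no measure-theoretic subtlety beyond them is needed.

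For the monotonicity statement, I would fix $\mu \in \mathcal N(B)$, so that $\mu^\ast(B) = 0$. Because $A \subseteq B$, monotonicity of $\mu^\ast$ gives $\mu^\ast(A) \leq \mu^\ast(B) = 0$, whence $\mu^\ast(A) = 0$ and $\mu \in \mathcal N(A)$. As $\mu$ was arbitrary, this establishes $\mathcal N(B) \subseteq \mathcal N(A)$.

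For the identity $\mathcal N(\bigcup \mathscr E) = \bigcap \{\mathcal N(E) : E \in \mathscr E\}$, I would prove the two inclusions separately. For the inclusion $\subseteq$, observe that each $E \in \mathscr E$ satisfies $E \subseteq \bigcup \mathscr E$, so the first part already yields $\mathcal N(\bigcup \mathscr E) \subseteq \mathcal N(E)$ for every $E$; intersecting over all $E \in \mathscr E$ gives the desired containment. For the reverse inclusion $\supseteq$, I would fix $\mu$ in the intersection, so that $\mu^\ast(E) = 0$ for every $E \in \mathscr E$. Since $\mathscr E$ is countable, countable subadditivity of $\mu^\ast$ gives $\mu^\ast(\bigcup \mathscr E) \leq \sum_{E \in \mathscr E} \mu^\ast(E) = 0$, forcing $\mu^\ast(\bigcup \mathscr E) = 0$ and hence $\mu \in \mathcal N(\bigcup \mathscr E)$.

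There is no genuine obstacle in this argument; the only point deserving attention is that the countability hypothesis on $\mathscr E$ is precisely what licenses the passage to countable (rather than merely finite) subadditivity in the $\supseteq$ direction, which is what makes the annihilator compatible with the $\sigma$-ideal structure alluded to before the statement.
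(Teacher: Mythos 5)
Your proof is correct: the paper states this lemma without proof, treating it as an immediate consequence of exactly the two outer-measure properties you invoke (monotonicity and countable subadditivity), so your argument simply spells out the intended reasoning. Nothing is missing; the passage from countability of $\mathscr E$ to countable subadditivity is the only point of substance, and you handle it properly.
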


Recall the following notions.

\begin{definition}
Let $X$ be a Polish space and $A \subseteq X$. We say that $A$ is universally measurable if $A$ is
$\mu$-measurable for every $\mu \in \mathcal M(X)$. Similarly, we say that $A$ is universally null if
$\mu^\ast(A) = 0$ for every $\mu \in \mathcal M_{c}(X)$.
\end{definition}

Immediately, all universally measurable sets are residually measurable.
Moreover, if $X$ is a Polish space with no isolated points and $A$ is universally null, then $A$ is residually null.
So, in spaces of usual interest, like $\mathbb R$, we see that the $\sigma$-ideal of residually null sets is finer than the $\sigma$-ideal of universally null sets.
In fact, it is strictly finer but we will need Theorem \ref{thm:meager_is_residually_null} to witness this fact.

The following easy proposition states that the notion of residually null is invariant under homeomorphisms.

\begin{proposition}
If $\varphi: X \mapsto Y$ is a homeomorphism and $\varphi^{*}: \mathcal{M}(X) \mapsto \mathcal{M}(Y)$ is
defined to be $\varphi^{*}(\mu) = \mu \circ \varphi^{-1}$, then $\varphi^{*}$ is a homeomorphism and therefore
preserves meager sets and comeager sets.  Therefore in this sense the notion of being residually null is a
topological invariant.
\label{prop:TopologicalInvarianceOfRN}
\end{proposition}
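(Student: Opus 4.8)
The plan is to check that $\varphi^{*}$ is a well-defined bijection whose inverse is $(\varphi^{-1})^{*}$, to establish bicontinuity through the change-of-variables formula, and finally to transport annihilators along $\varphi^{*}$. First I would observe that a homeomorphism is in particular a Borel isomorphism, so for Borel $B \subseteq Y$ the set $\varphi^{-1}(B)$ is Borel and $\varphi^{*}(\mu)(B) = \mu(\varphi^{-1}(B))$ defines a genuine Borel probability measure on $Y$; thus $\varphi^{*} : \mathcal{M}(X) \mapsto \mathcal{M}(Y)$ makes sense. A direct computation on Borel sets, using $\varphi^{-1} \circ \varphi = \mathrm{id}_{X}$, shows that $(\varphi^{-1})^{*}$ is a two-sided inverse for $\varphi^{*}$, so $\varphi^{*}$ is a bijection.

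For continuity, recall that the topology on $\mathcal{M}(Y)$ is the coarsest making each evaluation $\nu \mapsto \int g\, d\nu$ continuous for $g \in C_{b}(Y)$, so it suffices to show that $\mu \mapsto \int g\, d(\varphi^{*}\mu)$ is continuous on $\mathcal{M}(X)$ for every such $g$. The change-of-variables formula for pushforward measures gives $\int g\, d(\varphi^{*}\mu) = \int (g \circ \varphi)\, d\mu$, and since $\varphi$ is continuous and $g$ is bounded and continuous, $g \circ \varphi \in C_{b}(X)$; hence this map is continuous by the very definition of the topology on $\mathcal{M}(X)$. Running the same argument with $\varphi^{-1}$ in place of $\varphi$ shows that $(\varphi^{*})^{-1} = (\varphi^{-1})^{*}$ is continuous as well, so $\varphi^{*}$ is a homeomorphism. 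Since homeomorphisms carry nowhere dense sets to nowhere dense sets, they preserve the meager ideal and hence the comeager filter, giving the stated preservation.

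To deduce topological invariance I would prove the identity $\varphi^{*}(\mathcal{N}(A)) = \mathcal{N}(\varphi(A))$ for every $A \subseteq X$. The crux is that $(\varphi^{*}\mu)^{\ast}(\varphi(A)) = \mu^{\ast}(A)$: because $\varphi$ is a Borel isomorphism, the Borel supersets $B \supseteq \varphi(A)$ in $Y$ correspond bijectively under $B \mapsto \varphi^{-1}(B)$ to the Borel supersets of $A$ in $X$, with $(\varphi^{*}\mu)(B) = \mu(\varphi^{-1}(B))$, so the two infima defining the respective outer measures coincide. It follows that $\mu \in \mathcal{N}(A)$ if and only if $\varphi^{*}(\mu) \in \mathcal{N}(\varphi(A))$, whence $\varphi^{*}(\mathcal{N}(A)) = \mathcal{N}(\varphi(A))$ by bijectivity. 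Because $\varphi^{*}$ sends comeager sets to comeager sets, residual nullity of $A$ (that is, $\mathcal{N}(A)$ comeager in $\mathcal{M}(X)$) forces $\mathcal{N}(\varphi(A))$ to be comeager in $\mathcal{M}(Y)$, so $\varphi(A)$ is residually null.

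The only delicate point, and the step I would treat most carefully, is that residual nullity is phrased in terms of the \emph{outer} measure of an arbitrary, possibly non-measurable, set rather than a Borel measure; confirming that $\varphi^{*}$ respects the infimum defining $\mu^{\ast}$ is exactly where the Borel-isomorphism property of $\varphi$ is indispensable. The remaining verifications (well-definedness, the inverse identity, and change of variables) are routine.
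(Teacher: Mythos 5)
Your proposal is correct, and it supplies precisely the routine verification that the paper leaves implicit: the paper states this as an ``easy proposition'' with no proof at all, the intended argument being exactly the one you give (pushforward via change of variables makes $\varphi^{*}$ a homeomorphism with inverse $(\varphi^{-1})^{*}$, and the identity $\varphi^{*}(\mathcal{N}(A)) = \mathcal{N}(\varphi(A))$ transports comeagerness of annihilators). Your care with the outer-measure step --- matching the infima over Borel supersets through the Borel isomorphism, so that residual nullity of arbitrary, possibly non-measurable sets is preserved --- is the one point where the claim is not purely formal, and you handle it correctly.
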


\section{Complexity Considerations}

Here, we turn our attention to the descriptive relationships between sets and their annihilators.
To begin, recall

\begin{theorem}[Portmanteau Theorem, Theorem 6.1, p. 40, \cite{partha}]
Let $X$ be a Polish space, $\{ \mu_n : n \in \omega \} \subseteq \mathcal M(X)$, and $\mu \in \mathcal M(X)$.
Then the following are equivalent:
\begin{itemize}
\item $\mu_n \to \mu$;
\item for every closed set $F \subseteq X$, $\ds\limsup \{ \mu_n(F) : n \in \omega \} \leq \mu(F)$;
\item for every open set $U \subseteq X$, $\ds \mu(U) \leq \liminf \{ \mu_n(U) : n \in \omega \}$.
\end{itemize}
\label{theorem_portmanteau}
\end{theorem}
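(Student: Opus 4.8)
The plan is to unwind the definition of the topology on $\mathcal M(X)$ and then establish the equivalences by hand. By construction, the topology on $\mathcal M(X)$ is the initial (coarsest) topology for the family of maps $\mu \mapsto \int f\,d\mu$ with $f \in C_b(X)$, so for sequences the assertion $\mu_n \to \mu$ is literally equivalent to $\int f\,d\mu_n \to \int f\,d\mu$ for every $f \in C_b(X)$; I would record this reformulation first. I would also observe at the outset that the closed-set condition and the open-set condition are trivially interchangeable: taking $F = X \setminus U$ and using that each $\mu_n$ and $\mu$ are probability measures gives $\mu_n(F) = 1 - \mu_n(U)$ and $\mu(F) = 1 - \mu(U)$, so $\limsup_n \mu_n(F) \leq \mu(F)$ rearranges into $\mu(U) \leq \liminf_n \mu_n(U)$. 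Thus it suffices to prove that $\mu_n \to \mu$ is equivalent to the closed-set inequality.

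For the forward implication, fix a closed set $F$ and a complete compatible metric $d$ on $X$ (available since $X$ is Polish). For each $k$ set $f_k(x) = \max\{0,\, 1 - k\,d(x,F)\}$; each $f_k$ lies in $C_b(X)$, takes values in $[0,1]$, equals $1$ on $F$, and decreases pointwise to the indicator $\mathbf 1_F$ as $k \to \infty$ (closedness of $F$ is what forces $d(x,F) > 0$ off $F$). Since $\mathbf 1_F \leq f_k$, I would bound $\mu_n(F) \leq \int f_k\,d\mu_n$, take $\limsup_n$ using weak convergence to get $\limsup_n \mu_n(F) \leq \int f_k\,d\mu$, and then let $k \to \infty$, invoking dominated convergence to conclude $\limsup_n \mu_n(F) \leq \mu(F)$.

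For the converse, suppose the closed-set inequality holds and fix $f \in C_b(X)$. After an affine rescaling I may assume $0 \leq f \leq 1$. I would use the layer-cake representation $\int f\,d\nu = \int_0^1 \nu(\{f \geq t\})\,dt$, noting that each superlevel set $\{f \geq t\}$ is closed by continuity of $f$. Applying the hypothesis for each fixed $t$ and then the reverse Fatou lemma (legitimate because $t \mapsto \mu_n(\{f \geq t\})$ is uniformly bounded by $1$ on $[0,1]$) yields $\limsup_n \int f\,d\mu_n \leq \int_0^1 \limsup_n \mu_n(\{f \geq t\})\,dt \leq \int f\,d\mu$. Applying the same inequality to $1 - f$ and using the probability normalization converts this $\limsup$ bound into the matching bound $\liminf_n \int f\,d\mu_n \geq \int f\,d\mu$, so $\int f\,d\mu_n \to \int f\,d\mu$; by the reformulation in the first paragraph this is exactly $\mu_n \to \mu$.

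I expect the converse direction to be the main obstacle: the reduction to $[0,1]$-valued $f$, the choice of \emph{closed} superlevel sets so that the given inequality points in the usable direction, and the domination needed for reverse Fatou are the places where care is required. The forward direction is essentially the standard outer approximation of a closed set by continuous functions and should be routine once the metric is fixed.
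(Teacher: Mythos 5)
Your argument is correct, but there is nothing in the paper to compare it against: the Portmanteau Theorem is stated there only as a recalled result, imported wholesale from Theorem 6.1, p.~40 of \cite{partha}, and the paper never proves it. What you have written is the standard textbook proof, essentially the argument found in the cited reference: (a) convergence in the initial topology generated by the maps $\mu \mapsto \int f\,d\mu$ is, by the defining property of initial topologies, equivalent to $\int f\,d\mu_n \to \int f\,d\mu$ for every $f \in C_b(X)$; (b) the closed-set and open-set inequalities are exchanged by complementation, using that all the measures involved are probability measures; (c) for the forward direction, the functions $f_k(x) = \max\{0,\, 1 - k\,d(x,F)\}$ lie in $C_b(X)$, decrease pointwise to $\mathbf{1}_F$ precisely because $F$ is closed, and dominated convergence finishes; (d) for the converse, the layer-cake formula $\int f\,d\nu = \int_0^1 \nu(\{f \geq t\})\,dt$ with closed superlevel sets, reverse Fatou (legitimate since the integrands are bounded by $1$ on $[0,1]$ and, being monotone in $t$, are measurable), and the substitution $f \mapsto 1-f$ upgrade the one-sided $\limsup$ bound to genuine convergence. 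All of these steps are sound, and the cyclic structure (first condition equivalent to the second, second equivalent to the third) covers the full equivalence. One cosmetic remark: you ask for a \emph{complete} compatible metric, but completeness plays no role anywhere; any compatible metric suffices, since all that is used is that $d(x,F) > 0$ for $x \notin F$.
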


\begin{lemma} \label{lem:open_positive_dense}
Let $X$ be Polish and $U \subseteq X$ be open and non-empty.
Then $\mathcal N(U)$ is closed and nowhere dense in $\mathcal M(X)$.
That is, no non-empty open set is residually null.
\end{lemma}

\begin{proof}
By the Portmanteau Theorem we see that $\{ \mu \in \mathcal M(X) : \mu(U) > 0\}$ is open.
To see that it is dense, let $\mu \in \mathcal M(X)$ be arbitrary and notice that, for $x \in U$ and $\lambda \in [0,1)$, the measure $\nu_\lambda := (1-\lambda)\delta_x + \lambda \mu$ gives $U$ positive measure.
Finally, notice that $\nu_\lambda \to \mu$ as $\lambda \to 1$.
This completes the proof.
\end{proof}

The Portmanteau Theorem actually has some deeper consequences for the Borel structure of certain sets of measures.

\begin{proposition}
Let $X$ be Polish, $B \subseteq X$ be Borel, and $\varepsilon\in[0,1]$.
Then
\[
	M(B,\varepsilon) := \{ \mu \in \mathcal M(X) : \mu(B) \leq \varepsilon\}
\]
is Borel.
In particular, if $B \in \mathbf \Sigma^0_\alpha(X)$, then $M(B,\varepsilon)\in\mathbf\Pi^0_\alpha(\mathcal M(X))$ and, if $B \in \mathbf \Pi^0_\alpha(X)$, then $M(B,\varepsilon) \in \mathbf \Pi^0_{\alpha+1}(\mathcal M(X))$.
\label{prop:bounded_measures}
\end{proposition}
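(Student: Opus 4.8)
The plan is to argue by transfinite induction on the Borel rank, proving simultaneously, for each countable ordinal $\alpha \geq 1$, the pair of statements $(\Sigma_\alpha)$: ``$B \in \mathbf{\Sigma}^0_\alpha(X)$ implies $M(B,\varepsilon) \in \mathbf{\Pi}^0_\alpha(\mathcal{M}(X))$ for every $\varepsilon \in [0,1]$'' and $(\Pi_\alpha)$: ``$B \in \mathbf{\Pi}^0_\alpha(X)$ implies $M(B,\varepsilon) \in \mathbf{\Pi}^0_{\alpha+1}(\mathcal{M}(X))$ for every $\varepsilon \in [0,1]$''. The base case $(\Sigma_1)$ is exactly where the Portmanteau Theorem (Theorem~\ref{theorem_portmanteau}) enters: its third clause says that $\mu \mapsto \mu(U)$ is lower semicontinuous for open $U$, so $\{\mu : \mu(U) > \varepsilon\}$ is open and $M(U,\varepsilon)$ is therefore closed, hence $\mathbf{\Pi}^0_1$ --- the same computation already used for Lemma~\ref{lem:open_positive_dense}. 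Since every Borel set sits in some $\mathbf{\Sigma}^0_\alpha(X)$, establishing all the $(\Sigma_\alpha)$ and $(\Pi_\alpha)$ will yield both the refined complexity bounds and the unqualified conclusion that $M(B,\varepsilon)$ is Borel.

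The heart of the matter is the step $(\Sigma_\alpha)$ for $\alpha > 1$, which I would deduce from $(\Pi_\beta)$ for $\beta < \alpha$. Writing $B = \bigcup_n B_n$ with each $B_n \in \mathbf{\Pi}^0_{\beta_n}(X)$ and $\beta_n < \alpha$, the idea is to replace the $B_n$ by the increasing sets $C_n = \bigcup_{k \leq n} B_k$, so that continuity of measures from below gives $\mu(B) = \sup_n \mu(C_n)$ and hence
\[
M(B,\varepsilon) = \bigcap_n \{ \mu : \mu(C_n) \leq \varepsilon \} = \bigcap_n M(C_n,\varepsilon).
\]
The main subtlety --- the point I would check most carefully --- is the rank bookkeeping: each $M(C_n,\varepsilon)$ is produced by the induction hypothesis $(\Pi_{\gamma_n})$ at the cost of a ``$+1$'', and one must ensure this does not push past $\alpha$. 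It does not, because $C_n$ is a \emph{finite} union, so its rank $\gamma_n = \max_{k \leq n} \beta_k$ is a finite maximum of ordinals below $\alpha$ and is therefore itself below $\alpha$; thus $M(C_n,\varepsilon) \in \mathbf{\Pi}^0_{\gamma_n+1}(\mathcal{M}(X)) \subseteq \mathbf{\Pi}^0_\alpha(\mathcal{M}(X))$, and the countable intersection remains $\mathbf{\Pi}^0_\alpha$.

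Finally, I would obtain $(\Pi_\alpha)$ from $(\Sigma_\alpha)$ at the same level by complementation, which also settles the closed-set base case $\alpha = 1$. If $B \in \mathbf{\Pi}^0_\alpha(X)$ then $X \setminus B \in \mathbf{\Sigma}^0_\alpha(X)$, and from $\mu(B) = 1 - \mu(X \setminus B)$ one writes
\[
M(B,\varepsilon) = \{ \mu : \mu(X\setminus B) \geq 1-\varepsilon \} = \bigcap_{n \geq 1} \bigl( \mathcal{M}(X) \setminus M(X\setminus B,\, 1-\varepsilon-\tfrac1n) \bigr).
\]
By $(\Sigma_\alpha)$ each $M(X\setminus B,\, 1-\varepsilon-\tfrac1n)$ is $\mathbf{\Pi}^0_\alpha$, so each complement is $\mathbf{\Sigma}^0_\alpha \subseteq \mathbf{\Pi}^0_{\alpha+1}$, and the countable intersection lands in $\mathbf{\Pi}^0_{\alpha+1}(\mathcal{M}(X))$, as required.
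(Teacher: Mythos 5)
Your proof is correct and follows essentially the same route as the paper's: the Portmanteau Theorem for the open-set base case, finite unions of lower-rank pieces plus continuity from below for the $\mathbf{\Sigma}^0_\alpha$ step (with the same $\max$-of-ranks bookkeeping), and complementation via a strict-inequality approximation for the $\mathbf{\Pi}^0_\alpha$ step. The only cosmetic difference is that the paper packages your countable intersection of complements into an auxiliary set $M^\ast(B,\varepsilon) = \{\mu \in \mathcal M(X) : \mu(B) < \varepsilon\}$ whose complexity it tracks alongside $M(B,\varepsilon)$ through the induction.
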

\begin{proof}
Let $M^\ast(B,\varepsilon) = \{ \mu \in \mathcal M(X) : \mu(B) < \varepsilon\}$.
Observe that
\begin{equation}\label{eq:measure_strictly_bounded}
	M^\ast(B,\varepsilon) = \bigcup\{M(B,\varepsilon - 2^{-k}) : k \in \omega\}.
\end{equation}

Suppose $U$ is open.
Then, by the Portmanteau Theorem, $M(U,\varepsilon)$ is closed.
Hence, by \eqref{eq:measure_strictly_bounded}, $M^\ast(U,\varepsilon)$ is an $F_\sigma$.

Now, suppose $F$ is closed.
Notice that
\[
	\mu(F) \leq \varepsilon \Longleftrightarrow 1-\varepsilon \leq \mu(X\setminus F).
\]
Hence, we see that
\[
	M(F,\varepsilon) = \mathcal M(X) \setminus M^\ast(X\setminus F,1-\varepsilon),
\]
which provides us with the fact that $M(F,\varepsilon)$ is a $G_\delta$.

Now, let $\alpha$ be any countable ordinal bigger than $1$ and suppose, inductively, that for each $\beta < \alpha$, the following hold:
\begin{itemize}
	\item
	($\forall B \in \mathbf \Sigma^0_\beta(X)$)($M(B,\varepsilon) \in \mathbf \Pi^0_\beta(\mathcal M(X))$ and $M^\ast(B,\varepsilon) \in \mathbf\Sigma^0_{\beta+1}(\mathcal M(X))$);
	\item
	($\forall B \in \mathbf \Pi^0_\beta(X)$)($M(B,\varepsilon) \in \mathbf \Pi^0_{\beta+1}(\mathcal M(X))$).
\end{itemize}

Let $B \in \mathbf \Sigma^0_\alpha(X)$ and pick $\{ B_k : k \in \omega \} \subseteq \mathbf \bigcup\{\mathbf \Pi^0_\beta(X) : \beta < \alpha\}$ so that $B = \bigcup\{B_k:k\in\omega\}$.
For each $k \in \omega$, let $\beta_k < \alpha$ be so that $B_k \in \mathbf \Pi^0_{\beta_k}(X)$ and then define $\gamma_k = \max\{ \beta_i : i \leq k\} < \alpha$.
It follows that $B_0 \cup B_1 \cup \cdots \cup B_k \in \mathbf \Pi^0_{\gamma_k}(X)$ since $\mathbf \Pi^0_{\gamma_k}(X)$ is closed under finite unions.
So
\[
	M(B_0 \cup B_1 \cup \cdots \cup B_k, \varepsilon) \in \mathbf \Pi^0_{\gamma_k+1}(\mathcal M(X)) \subseteq \mathbf \Pi^0_\alpha(\mathcal M(X))
\]
by our inductive hypothesis.

As $\mathbf \Pi^0_\alpha(\mathcal M(X))$ is closed under countable intersections,
\[
	M(B,\varepsilon) = \bigcap\left\{M\left(B_0 \cup B_1 \cup \cdots \cup B_k,\varepsilon \right): k \in\omega \right\} \in \mathbf \Pi^0_\alpha(\mathcal M(X)).
\]
From \eqref{eq:measure_strictly_bounded}, we see that $M^\ast(B,\varepsilon) \in \mathbf \Sigma^0_{\alpha+1}(\mathcal M(X))$.

Now, let $B \in \mathbf \Pi^0_{\alpha}(X)$ and notice that
\[
	M(B,\varepsilon) = \mathcal M(X) \setminus M^\ast(X\setminus B,1-\varepsilon),
\]
Since $X \setminus B \in \mathbf \Sigma^0_{\alpha}(X)$ and $M^\star(X\setminus B, 1-\varepsilon) \in \mathbf \Sigma^0_{\alpha+1}(\mathcal M(X))$ by the above calculation, we see that $M(B,\varepsilon) \in \mathbf \Pi^0_{\alpha+1}(\mathcal M(X))$.
\end{proof}

\begin{corollary}
Let $X$ be Polish and $B \subseteq X$ be Borel.
Then $\mathcal N(B)$ is Borel.
In particular, if $B \in \mathbf \Sigma^0_\alpha(X)$, $\mathcal N(B) \in \mathbf \Pi^0_\alpha(\mathcal M(X))$ and, if $B \in \mathbf \Pi^0_\alpha(X)$, then $\mathcal N(B) \in \mathbf \Pi^0_{\alpha+1}(\mathcal M(X))$.
\label{cor:Borel_has_Borel_annihilator}
\end{corollary}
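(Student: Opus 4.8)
The plan is to recognize $\mathcal N(B)$ as the special case $\varepsilon = 0$ of the set $M(B,\varepsilon)$ already analyzed in Proposition \ref{prop:bounded_measures}, so that the corollary becomes a one-line application once the outer measure is replaced by the measure itself.

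The first and only substantive step is to observe that because $B$ is Borel, it is $\mu$-measurable (in the sense of Carath\'eodory) for \emph{every} $\mu \in \mathcal M(X)$; this is the standard fact that the Carath\'eodory construction applied to a Borel probability measure returns a complete measure whose domain contains the Borel $\sigma$-algebra. Consequently the outer measure and the measure agree on $B$, i.e.\ $\mu^\ast(B) = \mu(B)$ for all $\mu$. Since measures are non-negative, the condition $\mu^\ast(B) = 0$ is then equivalent to $\mu(B) \leq 0$, and therefore
\[
	\mathcal N(B) = \{ \mu \in \mathcal M(X) : \mu(B) = 0 \} = M(B,0).
\]

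Having identified $\mathcal N(B)$ with $M(B,0)$, I would simply invoke Proposition \ref{prop:bounded_measures} with the admissible value $\varepsilon = 0 \in [0,1]$. This immediately yields that $\mathcal N(B)$ is Borel, and the two pointwise-class assertions transfer verbatim: if $B \in \mathbf\Sigma^0_\alpha(X)$ then $\mathcal N(B) = M(B,0) \in \mathbf\Pi^0_\alpha(\mathcal M(X))$, and if $B \in \mathbf\Pi^0_\alpha(X)$ then $\mathcal N(B) = M(B,0) \in \mathbf\Pi^0_{\alpha+1}(\mathcal M(X))$. There is no genuine obstacle here; all of the descriptive-set-theoretic content has already been discharged in the proposition, and the corollary contributes only the measurability reduction $\mu^\ast(B) = \mu(B)$ for Borel $B$, which is where the tiny bit of care is needed.
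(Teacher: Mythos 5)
Your proposal is correct and is exactly the route the paper intends: the corollary is stated without proof precisely because it is the $\varepsilon = 0$ instance of Proposition \ref{prop:bounded_measures}, once one notes that $\mu^\ast(B) = \mu(B)$ for Borel $B$ (every Borel set being $\mu$-measurable), so that $\mathcal N(B) = M(B,0)$. Your write-up supplies that one reduction carefully, and the complexity assertions then transfer verbatim, just as you say.
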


The following appears as \cite[3.16]{dubins_freedman} in the context of compact metrizable spaces.

\begin{theorem}
Let $X$ be a Polish space.
If a set $A \subseteq X$ is meager, then $A$ is residually null.
\label{thm:meager_is_residually_null}
\end{theorem}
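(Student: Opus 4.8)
The plan is to reduce to the case of a closed nowhere dense set and then to show that the annihilator of such a set is a dense $G_\delta$ in $\mathcal{M}(X)$. First I would write the meager set $A$ as $A \subseteq \bigcup_{n} \overline{A_n}$, where each $A_n$ is nowhere dense, so that each $\overline{A_n}$ is closed and nowhere dense. By Lemma \ref{lem:countable_additivity} we have $\mathcal{N}\left(\bigcup_n \overline{A_n}\right) = \bigcap_n \mathcal{N}(\overline{A_n})$ and $\mathcal{N}\left(\bigcup_n \overline{A_n}\right) \subseteq \mathcal{N}(A)$; since a countable intersection of co-meager subsets of the Polish (hence Baire) space $\mathcal{M}(X)$ is co-meager, it suffices to prove that every closed nowhere dense $F$ is residually null, i.e. that $\mathcal{N}(F)$ is co-meager.

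Fix such an $F$. Since $F$ is closed and hence $\mu$-measurable for every $\mu \in \mathcal{M}(X)$, we have $\mathcal{N}(F) = \{\mu \in \mathcal{M}(X) : \mu(F) = 0\}$, and by Corollary \ref{cor:Borel_has_Borel_annihilator} this set is $G_\delta$. (Alternatively, the Portmanteau Theorem makes $\mu \mapsto \mu(F)$ upper semicontinuous for closed $F$, so that $\{\mu : \mu(F) = 0\} = \bigcap_n \{\mu : \mu(F) < 2^{-n}\}$ is an intersection of open sets.) As $\mathcal{M}(X)$ is Baire, it remains only to show that $\mathcal{N}(F)$ is dense, after which being a dense $G_\delta$ forces it to be co-meager and the reduction is complete.

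For density I would exploit that the finitely supported measures are dense in $\mathcal{M}(X)$. Given $\mu \in \mathcal{M}(X)$ and a basic neighborhood of $\mu$ determined by $f_1, \dots, f_n \in C_b(X)$ and $\varepsilon > 0$, first choose a finitely supported $\nu_0 = \sum_j c_j \delta_{x_j}$ with $\left|\int f_i\, d\nu_0 - \int f_i\, d\mu\right| < \varepsilon/2$ for all $i$. Because $F$ is nowhere dense, its complement $X \setminus F$ is dense, so for each atom $x_j$ I can select $x_j' \in X \setminus F$ so close to $x_j$ that, by continuity of the finitely many $f_i$, the measure $\nu = \sum_j c_j \delta_{x_j'}$ satisfies $\left|\int f_i\, d\nu - \int f_i\, d\nu_0\right| \leq \sum_j c_j |f_i(x_j') - f_i(x_j)| < \varepsilon/2$ for every $i$. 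Then $\nu$ lies in the original neighborhood of $\mu$, while every atom of $\nu$ lies off $F$, so $\nu(F) = 0$ and $\nu \in \mathcal{N}(F)$. Hence $\mathcal{N}(F)$ is dense.

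The main obstacle is precisely this density step: one must approximate $\mu$ by finitely supported measures and then nudge each atom off $F$ while controlling all of $f_1, \dots, f_n$ at once, which works because there are only finitely many test functions and finitely many atoms to handle. It is worth emphasizing why the perturbation-of-atoms argument, rather than the density of $\mathcal{M}_c(X)$, is the right tool here: a closed nowhere dense set can carry positive mass under a continuous measure (for instance, a fat Cantor set under Lebesgue measure), so an appeal to continuous measures alone would not yield $\nu(F) = 0$.
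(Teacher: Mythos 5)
Your proof is correct and takes essentially the same route as the paper: reduce to a closed nowhere dense set via Lemma \ref{lem:countable_additivity}, then show its annihilator is a dense $G_\delta$ in $\mathcal{M}(X)$, with the $G_\delta$ part coming from Corollary \ref{cor:Borel_has_Borel_annihilator} exactly as you do. The only difference is the density step, where the paper simply cites Theorem 6.3, p.~44 of Parthasarathy (measures finitely supported on a dense subset of $X$ are dense in $\mathcal{M}(X)$), while you reconstruct the content of that citation by hand via the atom-perturbation argument (granting the standard fact that finitely supported measures are dense).
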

\begin{proof}
By Lemma \ref{lem:countable_additivity}, it suffices to consider the case when $A$ is closed and nowhere
dense. Let $U = X \setminus A$ and notice that $U$ is open and dense in $X$. By Theorem 6.3, p. 44,
\cite{partha}, we see that $\mathcal N(A)$ is dense and, by Corollary \ref{cor:Borel_has_Borel_annihilator},
we have that $\mathcal{N}(A)$ is a $G_{\delta}$, finishing the proof.
\end{proof}

Now we are sufficiently equipped to see that the class of residually null sets is strictly finer than the
class of universally null sets. Notice that the middle-thirds Cantor set is closed and nowhere dense but not
universally null as any continuous non-zero Borel probability on $2^\omega$ can be extended to a continuous
Borel probability on $\mathbb R$.  The same holds for any meager uncountable analytic set.

It also turns out that the class of residually measurable sets is strictly finer than the class of universally
measurable sets. Let $A \subseteq [0,1]$ be non-measurable with respect to the Lebesgue measure. Then define
$E = A \times \{0\} \subseteq [0,1]^2$. It follows that $E$ is a residually measurable subset of $[0,1]^2$
since it's meager but not universally null since we can extend the Lebesgue measure on $[0,1]$ to a continuous
Borel probability on $[0,1]^2$ in the obvious way.

The next four lemmas are general facts of interest.

\begin{lemma}
Let $X$ be a Polish space, $\mu \in \mathcal M(X)$, and $A, B \subseteq X$ so that $\mu^\ast(A \triangle B) = 0$.
If $A$ is $\mu$-measurable, $B$ is $\mu$-measurable.
\label{lem:RelativeMeasurability}
\end{lemma}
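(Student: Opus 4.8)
The plan is to reduce everything to two standard facts about the Carath\'{e}odory construction: that the $\mu$-measurable sets form a $\sigma$-algebra, and that this $\sigma$-algebra is complete, in the sense that every set of outer measure zero is $\mu$-measurable. Granting these, the lemma becomes a short algebraic manipulation expressing $B$ in terms of $A$ and two $\mu$-null sets, so the substance of the argument is really just establishing completeness directly from the definition of $\mu^\ast$.

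First I would record the completeness fact. If $N \subseteq X$ satisfies $\mu^\ast(N) = 0$, then for any test set $T \subseteq X$ we have $\mu^\ast(T \cap N) \leq \mu^\ast(N) = 0$ and $\mu^\ast(T \setminus N) \leq \mu^\ast(T)$, so the nontrivial half of the Carath\'{e}odory criterion, namely $\mu^\ast(T) \geq \mu^\ast(T \cap N) + \mu^\ast(T \setminus N)$, holds; the reverse inequality is merely subadditivity of $\mu^\ast$. Hence $N$ is $\mu$-measurable, and since every subset of $N$ again has outer measure zero, every subset of a $\mu$-null set is $\mu$-measurable.

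Next, set $M = A \triangle B$, so that $\mu^\ast(M) = 0$ by hypothesis. The sets $A \setminus B$ and $B \setminus A$ are both contained in $M$, hence are $\mu$-null and therefore $\mu$-measurable by the previous step. Since $A$ is $\mu$-measurable and the measurable sets form a $\sigma$-algebra, $A \cap B = A \setminus (A \setminus B)$ is measurable as a difference of measurable sets, and then
\[
    B = (A \cap B) \cup (B \setminus A)
\]
is measurable as a finite union of measurable sets, which is the desired conclusion.

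I do not anticipate any genuine obstacle here: the statement is the familiar completeness of the Carath\'{e}odory extension, and the only point deserving a word of care is the verification that outer-measure-zero sets satisfy the Carath\'{e}odory condition. I would handle that directly from the definition of $\mu^\ast$, as in the second paragraph, rather than quoting an external completeness theorem, so that the proof stands on its own.
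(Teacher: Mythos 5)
Your proof is correct. Note that the paper never actually proves this lemma---it is stated as one of several ``general facts of interest'' with no proof supplied---and your argument (verifying the Carath\'{e}odory criterion directly for null sets, so that $A \setminus B$ and $B \setminus A$ are $\mu$-measurable, then writing $B = (A \cap B) \cup (B \setminus A)$ inside the $\sigma$-algebra of $\mu$-measurable sets) is precisely the standard completeness argument the authors are implicitly relying on.
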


\begin{corollary}
Let $X$ be a Polish space.
If $A \subseteq X$ has the Baire property, then $A$ is residually measurable.
\end{corollary}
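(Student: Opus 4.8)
The plan is to exploit the characterization of the Baire property together with the two tools just established, namely Theorem \ref{thm:meager_is_residually_null} (meager implies residually null) and Lemma \ref{lem:RelativeMeasurability} (measurability is preserved under symmetric differences that are outer-null). First I would invoke the defining property of $BP(X)$: since $A$ has the Baire property, there is an open set $U \subseteq X$ such that the symmetric difference $A \triangle U$ is meager. The open set $U$ is Borel, hence $\mu$-measurable for \emph{every} $\mu \in \mathcal M(X)$, so the only obstruction to the $\mu$-measurability of $A$ is the error set $A \triangle U$.

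Next I would control that error set measure-theoretically. Because $A \triangle U$ is meager, Theorem \ref{thm:meager_is_residually_null} tells us that $A \triangle U$ is residually null, i.e.\ $\mathcal N(A \triangle U) = \{\mu \in \mathcal M(X) : \mu^\ast(A \triangle U) = 0\}$ is co-meager in $\mathcal M(X)$. Thus for a co-meager set of $\mu$ we simultaneously have that $U$ is $\mu$-measurable (being Borel) and that $\mu^\ast(A \triangle U) = 0$.

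Finally I would apply Lemma \ref{lem:RelativeMeasurability} pointwise in $\mu$: for each $\mu$ in this co-meager set, the hypotheses ``$U$ is $\mu$-measurable'' and ``$\mu^\ast(A \triangle U) = 0$'' yield that $A$ is $\mu$-measurable. Hence $A$ is $\mu$-measurable for a co-meager set of $\mu \in \mathcal M(X)$, which is exactly the statement that $A$ is residually measurable, completing the argument.

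There is no serious obstacle here; the proof is essentially a bookkeeping combination of the three cited results, and the only point requiring a moment's care is noting that the single co-meager set supplied by Theorem \ref{thm:meager_is_residually_null} is the same set on which Lemma \ref{lem:RelativeMeasurability} can be applied, since universal measurability of the open set $U$ holds on \emph{all} of $\mathcal M(X)$ and therefore does not shrink the co-meager witness.
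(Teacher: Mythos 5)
Your proof is correct and follows essentially the same route as the paper's: both combine Theorem \ref{thm:meager_is_residually_null} applied to the meager error set with Lemma \ref{lem:RelativeMeasurability} applied pointwise on the resulting co-meager set of measures. The only (immaterial) difference is the choice of Borel witness: you use an open $U$ with $A \triangle U$ meager, while the paper writes $A = G \cup M$ with $G$ a $G_\delta$ and $M$ meager, so that $A \triangle G \subseteq M$.
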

\begin{proof}
Let $G \subseteq X$ be a $G_\delta$ and $M \subseteq X$ be meager so that $A = G \cup M$.
Theorem \ref{thm:meager_is_residually_null} guarantees that $\mathcal N(M)$ is co-meager in $\mathcal M(X)$.
For any $\mu \in \mathcal N(M)$, we have that $\mu^\ast(A \triangle G) \leq \mu^\ast(M) = 0$.
So, by Lemma \ref{lem:RelativeMeasurability}, we see that $A$ is $\mu$-measurable for each $\mu \in \mathcal N(M)$ which establishes that $A$ is residually measurable.
\end{proof}

\begin{corollary}
Let $X$ be a Polish space.
If $A \subseteq X$ has the Baire property, then $\mathcal N(A)$ has the Baire property.
\label{cor:Baire_prop}
\end{corollary}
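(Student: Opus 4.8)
The plan is to compare $\mathcal N(A)$ with $\mathcal N(G)$ for a Borel set $G$ that differs from $A$ by a meager set, and to show that these two sets of measures agree off a meager subset of $\mathcal M(X)$. First I would use the Baire property of $A$ to fix a Borel set $G$ (a $G_\delta$, say) and a meager set $M$ with $A \triangle G \subseteq M$; this is precisely the kind of decomposition already exploited in the preceding corollary. By Theorem \ref{thm:meager_is_residually_null} the set $\mathcal N(M)$ is co-meager in $\mathcal M(X)$, so its complement $\mathcal M(X) \setminus \mathcal N(M)$ is meager.

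Next I would establish the pointwise identity $\mathcal N(A) \cap \mathcal N(M) = \mathcal N(G) \cap \mathcal N(M)$. The key observation is that whenever $\mu^\ast(A \triangle G) = 0$ one has $\mu^\ast(A) = \mu^\ast(G)$: since outer measure is finitely subadditive and $A \subseteq G \cup (A \triangle G)$ while $G \subseteq A \cup (A \triangle G)$, each outer measure dominates the other. Now for $\mu \in \mathcal N(M)$ we have $\mu^\ast(A \triangle G) \leq \mu^\ast(M) = 0$, hence $\mu^\ast(A) = \mu^\ast(G) = \mu(G)$, the last equality because $G$ is Borel and therefore $\mu$-measurable. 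Consequently $\mu \in \mathcal N(A)$ if and only if $\mu \in \mathcal N(G)$ for every $\mu \in \mathcal N(M)$, which is exactly the claimed identity.

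It then follows that $\mathcal N(A) \triangle \mathcal N(G) \subseteq \mathcal M(X) \setminus \mathcal N(M)$, a meager set. By Corollary \ref{cor:Borel_has_Borel_annihilator} the set $\mathcal N(G)$ is Borel and hence has the Baire property; since the Baire property is preserved under symmetric difference with a meager set, $\mathcal N(A)$ inherits the Baire property as well.

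I do not expect a genuine obstacle here, as the argument is a measure-theoretic refinement of the preceding corollary and leans on the same two tools (Theorem \ref{thm:meager_is_residually_null} and Corollary \ref{cor:Borel_has_Borel_annihilator}). The only point requiring a moment's care is the subadditivity step that turns a null symmetric difference into genuine equality of the outer measures $\mu^\ast(A)$ and $\mu^\ast(G)$; once that is in hand, the reduction to a Borel annihilator modulo a meager set is immediate.
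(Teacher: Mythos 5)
Your proof is correct, and although it leans on the same two inputs as the paper (Theorem \ref{thm:meager_is_residually_null} and Corollary \ref{cor:Borel_has_Borel_annihilator}), the mechanism is genuinely different. The paper decomposes $A = G \cup M$ with $G$ a $G_\delta$ and $M$ meager and applies Lemma \ref{lem:countable_additivity} to get the \emph{exact} identity $\mathcal N(A) = \mathcal N(G) \cap \mathcal N(M)$, exhibiting $\mathcal N(A)$ outright as the intersection of a Borel set with a co-meager set; no outer measures are ever estimated. You instead take the symmetric-difference form $A \triangle G \subseteq M$ and prove agreement only modulo a meager set: $\mathcal N(A) \cap \mathcal N(M) = \mathcal N(G) \cap \mathcal N(M)$, hence $\mathcal N(A) \triangle \mathcal N(G) \subseteq \mathcal M(X) \setminus \mathcal N(M)$, which is meager, and then you invoke stability of the Baire property under symmetric difference with a meager set. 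All of your steps check out, including the subadditivity argument converting $\mu^\ast(A \triangle G) = 0$ into $\mu^\ast(A) = \mu^\ast(G)$ (a quantitative cousin of Lemma \ref{lem:RelativeMeasurability}). What the paper's route buys is brevity: it is pure set algebra of annihilators. What your route buys is generality: nowhere do you use that $A \triangle G$ is actually meager, only that its annihilator is co-meager, so your argument proves verbatim that $\mathcal N(A)$ has the Baire property whenever $A$ differs from a Borel set by a \emph{residually null} set --- in particular for every EBP-set $A$, taking $G = U$ open with $A \triangle U$ residually null, where $\mathcal N(U)$ is even closed by Lemma \ref{lem:open_positive_dense}. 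The paper's argument does not transfer so directly to that setting, since the EBP analogue of its decomposition, $A = (U \setminus N) \cup M$ from Lemma \ref{lem:R-set_Representations}, has a first piece that need not be Borel.
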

\begin{proof}
Let $G \subseteq X$ be a $G_\delta$ and $M \subseteq X$ be meager so that $A = G \cup M$.
Then, notice that $\mathcal N(A) = \mathcal N(G) \cap \mathcal N(M)$.
Corollary \ref{cor:Borel_has_Borel_annihilator} informs us that $\mathcal N(G)$ is Borel and Theorem \ref{thm:meager_is_residually_null} tells us that $\mathcal N(M)$ is co-meager so we conclude that $\mathcal N(A)$ is a set with the Baire property.
\end{proof}

The following partial converse to Theorem \ref{thm:meager_is_residually_null} appears as \cite[Theorem 3.17]{dubins_freedman}, again, in the context of compact metric spaces.
\begin{theorem} \label{thm:ResiduallyNullAndBaireImpliesMeager}
Let $X$ be Polish and $A \subseteq X$ be a residually null set with the Baire property.
Then $A$ is meager.
\end{theorem}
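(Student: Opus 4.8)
The plan is to argue by contradiction, exploiting the decomposition afforded by the Baire property to produce a nonempty open set whose annihilator is forced to be comeager, in direct conflict with Lemma~\ref{lem:open_positive_dense}. The engine of the proof is the monotonicity and finite additivity of $\mathcal N$ recorded in Lemma~\ref{lem:countable_additivity}, together with the fact that meager sets are residually null (Theorem~\ref{thm:meager_is_residually_null}).

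First I would invoke the Baire property to fix an open set $U \subseteq X$ with $A \triangle U$ meager, and set $M = A \triangle U$. Suppose toward a contradiction that $A$ is not meager. Then $U$ must be nonempty, since $U = \emptyset$ would give $A = M$, which is meager.

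Next, the key observation is the containment $U \subseteq A \cup M$: indeed $U = (U \cap A) \cup (U \setminus A)$, and $U \setminus A \subseteq A \triangle U = M$. Applying the monotonicity and finite additivity of the annihilator from Lemma~\ref{lem:countable_additivity}, this yields
\[
	\mathcal N(A) \cap \mathcal N(M) = \mathcal N(A \cup M) \subseteq \mathcal N(U).
\]
Now $\mathcal N(A)$ is comeager by hypothesis (residual nullity), while $\mathcal N(M)$ is comeager by Theorem~\ref{thm:meager_is_residually_null} since $M$ is meager; hence their intersection is comeager. Consequently $\mathcal N(U)$ contains a comeager set and is therefore itself comeager. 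But $U$ is nonempty and open, so Lemma~\ref{lem:open_positive_dense} forces $\mathcal N(U)$ to be nowhere dense, hence meager. Since $\mathcal M(X)$ is Polish and nonempty, it is a Baire space, and so no set can be simultaneously comeager and meager. This contradiction shows that $A$ must be meager.

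The argument is essentially immediate once the right objects are aligned, so I do not anticipate a serious obstacle; the only point requiring genuine care is selecting the containment direction correctly, so that the two comeager annihilators $\mathcal N(A)$ and $\mathcal N(M)$ combine to overwhelm the nowhere-dense annihilator $\mathcal N(U)$ of the open set rather than the reverse. I would also flag for verification the trivial but load-bearing facts that $\mathcal M(X) \neq \emptyset$ (it contains all point masses) and that $A \triangle U$ meager indeed forces $U \neq \emptyset$ whenever $A$ is non-meager.
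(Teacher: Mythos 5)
Your proof is correct and follows essentially the same route as the paper's: both fix an open $U$ with $A \triangle U$ meager and derive a contradiction with Lemma~\ref{lem:open_positive_dense} from the comeagerness of $\mathcal N(A)$ and $\mathcal N(A \triangle U)$, the latter supplied by Theorem~\ref{thm:meager_is_residually_null}. The only difference is cosmetic: where the paper picks a single measure $\mu$ in the comeager intersection and computes $0 < \mu(U) \leq \mu(U \triangle A) = 0$, you run the same estimate at the level of annihilators, via the containment $U \subseteq A \cup (A \triangle U)$ and the monotonicity and additivity of $\mathcal N$ from Lemma~\ref{lem:countable_additivity}.
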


\begin{proof}
As $A$ has the Baire property, let $U$ be open so that $U \triangle A$ is meager, and suppose, by way of contradiction, that $U \neq \emptyset$.
Since $\mathcal N(A \triangle U)$ is co-meager by Theorem \ref{thm:meager_is_residually_null}, Lemma \ref{lem:open_positive_dense} provides us with the fact that
\[
	G := \mathcal N(A \triangle U) \cap \mathcal N(A) \cap \{ \mu \in \mathcal M(X) : \mu(U) >0\}
\]
is co-meager in $\mathcal M(X)$.
Let $\mu \in G$ and notice that
\[
	0 < \mu(U) \leq \mu((U \setminus A) \cup A) \leq \mu(U \setminus A) + \mu(A) \leq \mu(U \triangle A) = 0,
\]
a contradiction.
Hence, $U = \emptyset$ and we see that $A$ is meager.
\end{proof}

R. Solovay demonstrated, in \cite{Solovay}, that it is consistent with ZF+DC (ZF and Dependent Choice) that all sets of reals have the Baire property assuming the existence of an inaccessible cardinal.
Later, S. Shelah, in \cite{ShelahSolovay}, showed that the inaccessible is unnecessary to obtain a model of ZF+DC where every set of reals has the Baire property.
Hence, it is consistent with ZF+DC that all residually null sets are meager by Theorem \ref{thm:ResiduallyNullAndBaireImpliesMeager}.
Since DC is strictly weaker than the full Axiom of Choice, we are led to the natural question:

\begin{question} \label{question_Mauldin}
Is it consistent with ZFC that every residually null set is meager?
\end{question}

One could ask whether or not there is some relation to a question of \cite{mauldin}, revisited in
\cite{larson_shelah} as Question 4.1, which asks if it is consistent with ZFC that all universally measurable
sets have the Baire Property. Surely, an affirmative answer to Question 1 would settle any doubts about the
consistency with ZFC of all universally null sets having the Baire property. Assuming we're in a universe of
ZFC where all universally null sets have the Baire property, could we conclude that all residually null sets
are meager? Perhaps a bit more generally, is it the case that all residually null sets can be realized as a
union of a universally null set with a meager set?

Hitherto, we've concluded complexity limitations on the annihilator of a set $A$ assuming some complexity limitations on $A$.
Now, we'll show that complexity bounds on the annihilator of $A$ can impose other complexity bounds upon $A$ itself.

Toward such an end, combining Lemma 6.1, p. 42, \cite{partha} and Lemma 6.2, p. 42, \cite{partha}, we obtain that
\begin{lemma} \label{lem:X_homeo_Dirac}
For any Polish space $X$, the map $x \mapsto \delta_x$, $X \mapsto \mathcal M(X)$, is a homeomorphism onto its range.
Moreover, $\{ \delta_x : x \in X\}$ is a closed subspace of $\mathcal M(X)$.
\end{lemma}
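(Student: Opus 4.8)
The plan is to verify the four assertions in turn: that $\Phi\colon x \mapsto \delta_x$ is continuous, that it is injective, that it is a homeomorphism onto its image, and that its image is closed. Throughout I would fix a compatible \emph{bounded} metric $d$ on $X$ (replacing a complete compatible metric by $\min(d,1)$ if necessary, which changes neither the topology nor completeness), and I would use freely that both $X$ and $\mathcal M(X)$ are metrizable, so that continuity and closedness may be tested along sequences.

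For continuity, observe that for every $f \in C_b(X)$ the composite $x \mapsto \int f\,d\delta_x = f(x)$ is continuous; since the topology on $\mathcal M(X)$ is by definition the initial topology induced by the maps $\mu \mapsto \int f\,d\mu$, the universal property of that topology yields continuity of $\Phi$ at once. Injectivity is immediate: if $x \neq y$, then $f(\cdot) = d(x,\cdot) \in C_b(X)$ satisfies $\int f\,d\delta_x = 0 \neq d(x,y) = \int f\,d\delta_y$, so $\delta_x \neq \delta_y$. For the homeomorphism-onto-image claim it then remains to check that $\Phi^{-1}$ is continuous on the subspace $\Phi(X)$, and since that subspace is metrizable it suffices to argue sequentially: if $\delta_{x_n} \to \delta_x$, then testing against the same $f(\cdot) = d(x,\cdot)$ gives $d(x,x_n) = \int f\,d\delta_{x_n} \to \int f\,d\delta_x = 0$, so $x_n \to x$, which is exactly sequential continuity of $\Phi^{-1}$.

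The substantive part is the closedness of $\Phi(X)$, and the idea I would use is to realize it as an intersection of closed sets through a variance characterization of point masses. For each $f \in C_b(X)$ the maps $\mu \mapsto \int f^2\,d\mu$ and $\mu \mapsto \int f\,d\mu$ are continuous (both $f$ and $f^2$ lie in $C_b(X)$), so $C_f := \{\mu \in \mathcal M(X) : \int f^2\,d\mu = (\int f\,d\mu)^2\}$ is closed, and plainly $\delta_x \in C_f$ for every $x$ and $f$. I claim conversely that $\bigcap_{f \in C_b(X)} C_f = \Phi(X)$: if $\mu$ lies in every $C_f$, then each $f \in C_b(X)$ has vanishing $\mu$-variance and is therefore $\mu$-almost everywhere constant. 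Running over a countable dense set $\{z_k\}\subseteq X$ and the functions $d(z_k,\cdot)$, one obtains a single $\mu$-conull set $N$ on which every $d(z_k,\cdot)$ equals its mean; approximating an arbitrary point of $N$ by the $z_k$ then forces $N$ to be a singleton $\{x\}$, whence $\mu = \delta_x$. Consequently $\Phi(X) = \bigcap_{f} C_f$ is closed.

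The one genuine obstacle is this last implication — that vanishing variance against every bounded continuous function forces a point mass — and the density-plus-metric argument above is what makes it go through (equivalently, one may argue by separating two distinct points of $\operatorname{supp}\mu$ with a function of nonzero variance). Everything else is a routine unwinding of the definition of the weak topology.
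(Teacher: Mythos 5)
Your proof is correct, and it takes a genuinely different route from the paper's, because the paper does not prove this lemma at all: it obtains it by citing Lemmas 6.1 and 6.2, p.~42, of \cite{partha}. Your first three steps (continuity of $x \mapsto \delta_x$ via the universal property of the initial topology, injectivity and inverse continuity via the test functions $d(x,\cdot)$ for a bounded compatible metric $d$) are the standard unwinding of the definition, and your appeal to sequences is legitimate within this paper's framework, since the Polishness, hence metrizability, of $\mathcal M(X)$ is already recorded in the introduction (again citing \cite{partha}). The substantive difference is your closedness argument: you characterize the Dirac measures as $\{\delta_x : x \in X\} = \bigcap_{f \in C_b(X)} C_f$ where $C_f = \{\mu : \int f^2\, d\mu = (\int f\, d\mu)^2\}$, each $C_f$ being the equalizer of two continuous real-valued functions and hence closed, and then you use countably many functions $d(z_k,\cdot)$, with $\{z_k\}$ dense, to pin any measure of identically vanishing variance down to a point mass. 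That step is sound: the conull set $N$ you produce is nonempty, and any two points of $N$ are limits of one and the same subsequence of $(z_k)$, hence equal. By contrast, the usual argument behind the cited lemma shows directly that the complement of the Dirac set is open: a non-degenerate $\mu$ charges two disjoint open sets $U, V$, and $\{\nu : \nu(U) > \mu(U)/2,\ \nu(V) > \mu(V)/2\}$ is a weak-topology neighborhood of $\mu$ containing no point mass. What your route buys is a self-contained proof inside the paper, independent of the order of development in \cite{partha}; what the paper's route buys is brevity, at the cost of sending the reader to the reference.
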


\begin{proposition} \label{prop:Gamma_annihilator_implies_negGamma}
Let $X$ be a Polish space and suppose $\mathbf \Gamma$ is one of the classes $\mathbf \Sigma^i_\alpha$ or $\mathbf \Pi^i_\alpha$ where $i = 0,1$ and $\alpha \geq 1$ is a countable ordinal.
Then, if $\mathcal N(A) \in \mathbf \Gamma(\mathcal M(X))$, $A \in \neg \mathbf \Gamma (X).$
\end{proposition}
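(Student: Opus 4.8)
The plan is to pull the complexity of $\mathcal N(A)$ back to $X$ along the Dirac embedding supplied by Lemma \ref{lem:X_homeo_Dirac}. The crucial preliminary observation is an elementary computation of $\mathcal N(A)$ on Dirac measures: for every $x \in X$,
\[
	\delta_x^\ast(A) = 0 \iff x \notin A.
\]
Indeed, if $x \in A$ then every Borel set containing $A$ contains $x$ and so has $\delta_x$-measure $1$, whence $\delta_x^\ast(A) = 1$; conversely, if $x \notin A$ then $X \setminus \{x\}$ is an open (hence Borel) superset of $A$ with $\delta_x(X \setminus \{x\}) = 0$, so $\delta_x^\ast(A) = 0$. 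Writing $e : X \to \mathcal M(X)$ for the map $x \mapsto \delta_x$ and $D = \{\delta_x : x \in X\}$ for its range, this says precisely that $e^{-1}(\mathcal N(A)) = X \setminus A$.

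The second step is pure complexity transfer. By Lemma \ref{lem:X_homeo_Dirac}, $e$ is a homeomorphism of $X$ onto $D$ and $D$ is closed in $\mathcal M(X)$. Assuming $\mathcal N(A) \in \mathbf \Gamma(\mathcal M(X))$, its trace $\mathcal N(A) \cap D$ lies in the relativized class $\mathbf \Gamma(D)$, since each of the boldface classes $\mathbf \Sigma^i_\alpha$ and $\mathbf \Pi^i_\alpha$ is stable under restriction to a subspace. Because $e$ is a homeomorphism onto $D$ and these pointclasses are preserved under preimages by homeomorphisms, and because $e(X) \subseteq D$ gives $e^{-1}(\mathcal N(A)) = e^{-1}(\mathcal N(A) \cap D)$, I would conclude
\[
	X \setminus A = e^{-1}(\mathcal N(A)) \in \mathbf \Gamma(X).
\]
Taking complements then yields $A \in \neg\mathbf \Gamma(X)$, which is the assertion.

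The only substantive step is the outer-measure identity $\delta_x^\ast(A) = 0 \iff x \notin A$; everything after it is bookkeeping resting on the two standard invariance properties (subspace restriction and homeomorphic preimage) that hold uniformly across all the classes named in the statement, so no case analysis in $\mathbf \Gamma$ is needed. The one point to keep honest is that complexity is first measured relative to the subspace $D$ before being pulled back to $X$, but since $D$ is closed in $\mathcal M(X)$ this introduces no loss. Note also that the passage from $\mathbf \Gamma$ to its dual $\neg\mathbf \Gamma$ arises solely from the final complementation, exactly as forced by the equality $e^{-1}(\mathcal N(A)) = X \setminus A$.
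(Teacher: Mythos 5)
Your proof is correct and takes essentially the same route as the paper's: the key identity $\mathcal N(A) \cap \{\delta_x : x \in X\} = \{\delta_x : x \in X \setminus A\}$ (your outer-measure computation) combined with Lemma \ref{lem:X_homeo_Dirac} to pull the complexity back to $X$, followed by complementation. Your write-up simply makes explicit the Dirac-measure calculation and the relativization-to-a-closed-subspace bookkeeping that the paper's terse proof leaves implicit.
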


\begin{proof}
Suppose that $\mathcal N(A) \in \mathbf \Gamma(\mathcal M(X))$.
Notice that
\[
	\mathcal N(A) \cap \{ \delta_x: x \in X\} = \{\delta_x : x \in X \setminus A\}.
\]
By Lemma \ref{lem:X_homeo_Dirac}, $\{ \delta_x : x \in X\setminus A\} \in \mathbf \Gamma(X)$.
It follows that $A \in \neg \mathbf \Gamma(X)$.
\end{proof}

\begin{proposition}
Suppose $A$ is residually null and that $\mathcal N(A)$ is either analytic or co-analytic.
Then $A$ is meager.
\end{proposition}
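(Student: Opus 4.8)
The plan is to leverage Proposition \ref{prop:Gamma_annihilator_implies_negGamma} to transfer the descriptive complexity from $\mathcal N(A)$ back to $A$ itself, and then to invoke Theorem \ref{thm:ResiduallyNullAndBaireImpliesMeager}. First I would split into the two hypothesized cases. If $\mathcal N(A)$ is analytic, i.e. $\mathcal N(A) \in \mathbf \Sigma^1_1(\mathcal M(X))$, then applying Proposition \ref{prop:Gamma_annihilator_implies_negGamma} with $\mathbf \Gamma = \mathbf \Sigma^1_1$ yields $A \in \neg\mathbf \Sigma^1_1(X) = \mathbf \Pi^1_1(X)$, so that $A$ is co-analytic. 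Symmetrically, if $\mathcal N(A)$ is co-analytic, taking $\mathbf \Gamma = \mathbf \Pi^1_1$ gives $A \in \neg\mathbf \Pi^1_1(X) = \mathbf \Sigma^1_1(X)$, so that $A$ is analytic. In either case, $A$ lands in $\mathbf \Sigma^1_1(X) \cup \mathbf \Pi^1_1(X)$.

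Next I would recall the classical theorem of Lusin and Sierpi\'nski that every analytic subset of a Polish space has the Baire property. Since the Baire property sets form a $\sigma$-algebra and are therefore closed under complementation, every co-analytic set also has the Baire property. Thus in both of the cases above $A$ has the Baire property.

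Finally, since $A$ is residually null by hypothesis and has now been shown to have the Baire property, Theorem \ref{thm:ResiduallyNullAndBaireImpliesMeager} applies directly and yields that $A$ is meager, completing the argument.

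I do not anticipate a genuine obstacle here: the only external ingredient is the Lusin--Sierpi\'nski result on the Baire property of analytic sets, and everything else is a direct chaining of the two earlier results. The single point requiring mild care is the bookkeeping of the negated pointclasses---making sure $\neg\mathbf \Sigma^1_1 = \mathbf \Pi^1_1$ and $\neg\mathbf \Pi^1_1 = \mathbf \Sigma^1_1$ are applied in the correct case---but this is routine, and one could even phrase the whole proof without the case split by simply noting that Proposition \ref{prop:Gamma_annihilator_implies_negGamma} places $A$ in $\mathbf \Sigma^1_1(X) \cup \mathbf \Pi^1_1(X)$ regardless.
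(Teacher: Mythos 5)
Your proposal is correct and follows exactly the paper's own proof: transfer the complexity of $\mathcal N(A)$ to $A$ via Proposition \ref{prop:Gamma_annihilator_implies_negGamma}, recall that analytic and co-analytic sets have the Baire property, and conclude with Theorem \ref{thm:ResiduallyNullAndBaireImpliesMeager}. The only difference is that you spell out the pointclass bookkeeping ($\neg\mathbf\Sigma^1_1 = \mathbf\Pi^1_1$ and vice versa) which the paper leaves implicit.
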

\begin{proof}
Recall that analytic sets and co-analytic sets have the Baire property.
Then, observe that, by Proposition \ref{prop:Gamma_annihilator_implies_negGamma}, $A$ is analytic or co-analytic.
Therefore, Theorem \ref{thm:ResiduallyNullAndBaireImpliesMeager} guarantees that $A$ is meager.
\end{proof}

\begin{corollary} \label{cor:Borel_iff_annihilator_Borel}
For any Polish space $X$, a set $A \subseteq X$ is Borel if and only if $\mathcal N(A)$ is Borel.
\end{corollary}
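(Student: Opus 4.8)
The plan is to prove the two implications separately, each of which is essentially immediate from a result already in hand, so I do not expect any genuine obstacle here; the work is in assembling the correct dual classes.

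For the forward direction, suppose $A$ is Borel. Then I would simply invoke Corollary \ref{cor:Borel_has_Borel_annihilator}, which states outright that the annihilator of a Borel set is Borel. Nothing further is needed.

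For the converse, suppose $\mathcal N(A)$ is Borel. The key observation is that every Borel subset of the Polish space $\mathcal M(X)$ lies in $\mathbf\Sigma^0_\alpha(\mathcal M(X))$ for some countable ordinal $\alpha \geq 1$. Fixing such an $\alpha$, I would apply Proposition \ref{prop:Gamma_annihilator_implies_negGamma} with $\mathbf\Gamma = \mathbf\Sigma^0_\alpha$: since $\mathcal N(A) \in \mathbf\Sigma^0_\alpha(\mathcal M(X))$, the proposition yields $A \in \neg\mathbf\Sigma^0_\alpha(X) = \mathbf\Pi^0_\alpha(X)$. As $\mathbf\Pi^0_\alpha(X)$ is a Borel pointclass, this shows $A$ is Borel, completing the equivalence.

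The only point requiring any care is the bookkeeping of the ambiguous/dual classes: one must note that $\neg\mathbf\Sigma^0_\alpha$ is precisely $\mathbf\Pi^0_\alpha$ and that this is contained in the Borel $\sigma$-algebra, so that the conclusion of Proposition \ref{prop:Gamma_annihilator_implies_negGamma} indeed delivers Borelness rather than merely membership in a complementary class. Beyond this, the argument is a direct concatenation of the two cited results and carries no hidden difficulty.
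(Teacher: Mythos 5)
Your proof is correct and is essentially identical to the paper's, which likewise just applies Corollary \ref{cor:Borel_has_Borel_annihilator} for the forward direction and Proposition \ref{prop:Gamma_annihilator_implies_negGamma} for the converse. You have merely spelled out the bookkeeping (placing $\mathcal N(A)$ in some $\mathbf\Sigma^0_\alpha$ and noting $\neg\mathbf\Sigma^0_\alpha = \mathbf\Pi^0_\alpha$ is Borel) that the paper leaves implicit.
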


\begin{proof}
Apply Corollary \ref{cor:Borel_has_Borel_annihilator} and Proposition \ref{prop:Gamma_annihilator_implies_negGamma}.
\end{proof}

In sum, it seems that the descriptive complexity of $A$ is intimately related to the descriptive complexity of its annihilator.
Inspired by Corollaries \ref{cor:Baire_prop} and \ref{cor:Borel_iff_annihilator_Borel}, one could ask the
following question:

\begin{question}
Is it consistent with ZFC that $A$ have the Baire property whenever its annihilator has the Baire property?
\label{question_caruvana}
\end{question}

This is a more general version of Question \ref{question_Mauldin} and we will now see that a negative answer
is consistent with ZFC.  However, it is still possible that a positive answer is consistent with ZFC.

Sets which are universally null but lack the Baire property began to bubble up into the mathematical consciousness more than a century ago.
Assuming the Continuum Hypothesis, both N. Luzin and P. Mahlo independently constructed a set, now called a Luzin set, of power continuum which has countable intersection with each meager set.
About two decades later, combined works of W. Sierpi\'{n}ski and E. Szpilrajn demonstrated that a Luzin set is actually universally null.
Since a Luzin set is non-meager and universally null, Theorem \ref{thm:ResiduallyNullAndBaireImpliesMeager} informs us that a Luzin set cannot have the Baire property.
For a more detailed discussion of this along with the relevant references, see \cite{Miller}.

As it turns out, the assumption of CH to build such sets is not absolutely necessary.
In fact, \cite{larson_shelah} provides a non-meager set which is universally null under more general conditions.
Let $\mathscr M$ be the class of all meager subsets of $\mathbb R$.
Recall that $\text{cov}(\mathscr M)$ is the least cardinal $\kappa$ so that $\mathbb R$ is covered by a union of $\kappa$-many meager sets and that $\text{cof}(\mathscr M)$ is the least cardinal $\kappa$ so that there exists a family $\mathscr F$ of cardinality $\kappa$ so that every meager set is contained in a member of $\mathscr F$.
Then, \cite[Theorem 4.3]{larson_shelah} informs us that, under the assumption $\text{cov}(\mathscr M) = \text{cof}(\mathscr M)$, there exists a universally null set which does not have the Baire property.
The condition $\text{cov}(\mathscr M) = \text{cof}(\mathscr M)$ is implied by both Martin's Axiom and, less generally, the Continuum Hypothesis.

\section{Extending the Baire Property}


\begin{definition}
Let $X$ be a Polish space.
We say that $A \subseteq X$ has the Extended Baire Property or that $A$ is an EBP-set if there exists an open set $U\subseteq X$ so that $U \triangle A$ is residually null.
We will let $EBP(X)$ be the collection of all subsets of $X$ which are EBP-sets.
\end{definition}

In the following sections we prove that many of the properties of
BP-sets are faithfully mirrored in properties of the EBP-sets.
Complete details are provided for the convenience of the reader.

\begin{proposition}
The property of being an EBP-set is a topological invariant.
\end{proposition}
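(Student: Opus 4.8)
The plan is to prove the statement in the form: if $\varphi : X \to Y$ is a homeomorphism of Polish spaces and $A \in EBP(X)$, then $\varphi(A) \in EBP(Y)$. Since $\varphi^{-1}$ is also a homeomorphism, the reverse implication follows by symmetry, which is exactly what topological invariance of the EBP should mean. First I would unpack the hypothesis: because $A$ is an EBP-set, I fix an open set $U \subseteq X$ with $U \triangle A$ residually null. The obvious candidate witnessing $\varphi(A) \in EBP(Y)$ is $V := \varphi(U)$, which is open in $Y$ precisely because $\varphi$ is a homeomorphism.

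The next step is the one purely combinatorial identity I need: images under an injection commute with symmetric difference. Since $\varphi$ is a bijection, $\varphi(U \triangle A) = \varphi(U) \triangle \varphi(A) = V \triangle \varphi(A)$. Therefore proving that $\varphi(A)$ is an EBP-set reduces entirely to showing that $V \triangle \varphi(A) = \varphi(U \triangle A)$ is residually null in $Y$, given that $U \triangle A$ is residually null in $X$.

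That final transfer is the content I would draw from Proposition \ref{prop:TopologicalInvarianceOfRN}. The map $\varphi^\ast : \mathcal M(X) \to \mathcal M(Y)$, $\mu \mapsto \mu \circ \varphi^{-1}$, is a homeomorphism and carries annihilators to annihilators: for any $S \subseteq X$ one checks, using injectivity of $\varphi$ and the fact that $\varphi$ and $\varphi^{-1}$ preserve Borel sets, that $(\mu \circ \varphi^{-1})^\ast(\varphi(S)) = \mu^\ast(S)$, so that $\mathcal N(\varphi(S)) = \varphi^\ast(\mathcal N(S))$. Since $\varphi^\ast$ is a homeomorphism, it preserves comeager sets, and hence $\mathcal N(S)$ comeager in $\mathcal M(X)$ forces $\mathcal N(\varphi(S))$ comeager in $\mathcal M(Y)$; that is, residual nullity of $S$ passes to $\varphi(S)$. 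Applying this with $S = U \triangle A$ yields that $V \triangle \varphi(A)$ is residually null, completing the argument.

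I expect no serious obstacle here: the only genuine content is the transfer of residual nullity, and this is already packaged in Proposition \ref{prop:TopologicalInvarianceOfRN}, while the symmetric-difference identity and the openness of $V = \varphi(U)$ are formal. The one point I would be careful to state explicitly is the identity $\mathcal N(\varphi(S)) = \varphi^\ast(\mathcal N(S))$, since it is what links the topological invariance of residual nullity to the pushforward homeomorphism $\varphi^\ast$ rather than to $\varphi$ itself.
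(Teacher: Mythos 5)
Your proof is correct and follows essentially the same route as the paper: fix a homeomorphism, push forward the witnessing open set, use that bijections commute with symmetric difference, and invoke Proposition \ref{prop:TopologicalInvarianceOfRN} to transfer residual nullity. The only difference is that you unpack the identity $\mathcal N(\varphi(S)) = \varphi^{\ast}(\mathcal N(S))$ explicitly, which the paper leaves implicit in its citation of that proposition.
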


\begin{proof}
Let $h: X \mapsto Y$ be a homeomorphism where $X$ and $Y$ are Polish spaces.
Suppose $A \in EBP(X)$ and pick $U$ open so that $A \triangle U$ is residually null.
Notice that $h[U]$ is open and that
\[
	h[A] \triangle h[U] = h[A \triangle U].
\]
Since the class of residually null sets is invariant under homeomorphisms by Proposition
\ref{prop:TopologicalInvarianceOfRN}, we see that $h[A] \in EBP(Y)$.
\end{proof}

\begin{lemma}
Let $A, B, C, A_\xi , B_\xi \subseteq X$ for $\xi < \kappa$ where $\kappa$ is any cardinal.
Then
\begin{itemize}
\item
$A^c \triangle B^c = A \triangle B$;
\item
$A \triangle C \subseteq (A \triangle B) \cup (B \triangle C)$;
\item
$(A \cap B) \triangle (C \cap D) \subseteq (A \triangle C) \cup (B \triangle D)$;
\item
$\left[ \bigcup \{ A_\xi : \xi < \kappa \} \right] \triangle \left[ \bigcup\{ B_\xi : \xi < \kappa \} \right] \subseteq \bigcup \{ A_\xi \triangle B_\xi : \xi < \kappa \}$.
\end{itemize}
\end{lemma}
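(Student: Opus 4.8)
The plan is to establish each of the four bullets by a direct membership argument, using throughout the basic fact that $x \in P \triangle Q$ holds precisely when $x$ belongs to exactly one of $P$ and $Q$. A convenient unifying device for the first two items is to pass to indicator functions: writing $\chi_S$ for the characteristic function of $S$ and working modulo $2$, one has $\chi_{P \triangle Q} = \chi_P + \chi_Q \pmod 2$, so that $(\wp(X), \triangle, \cap)$ becomes a commutative ring with $\triangle$ as addition. I would exploit this for the algebraic identities and fall back on explicit containments for the two inclusions that genuinely involve inclusion.

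For the first item I would compute directly: since $A^c \setminus B^c = A^c \cap B = B \setminus A$ and, symmetrically, $B^c \setminus A^c = A \setminus B$, the two symmetric differences coincide. Equivalently, in the ring above $\chi_{A^c} = 1 + \chi_A$, and the two added $1$'s cancel modulo $2$. For the second item I would first note the identity $A \triangle C = (A \triangle B) \triangle (B \triangle C)$, which is cancellation in the ring since $\chi_B$ occurs twice, and then invoke the trivial containment $P \triangle Q = (P \setminus Q) \cup (Q \setminus P) \subseteq P \cup Q$ applied with $P = A \triangle B$ and $Q = B \triangle C$.

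For the third item I would argue by cases on a point $x \in (A \cap B) \triangle (C \cap D)$. If $x \in (A \cap B) \setminus (C \cap D)$, then $x \in A$ and $x \in B$, while $x \notin C$ or $x \notin D$; in the former case $x \in A \setminus C \subseteq A \triangle C$, and in the latter $x \in B \setminus D \subseteq B \triangle D$. The case $x \in (C \cap D) \setminus (A \cap B)$ is handled symmetrically. The fourth item runs along the same lines but must be phrased so as to survive an index set of arbitrary cardinality: if $x \in \bigcup_\xi A_\xi \setminus \bigcup_\xi B_\xi$, I would fix a single witnessing index $\xi_0$ with $x \in A_{\xi_0}$; since $x \notin B_\xi$ for every $\xi$, in particular $x \notin B_{\xi_0}$, whence $x \in A_{\xi_0} \triangle B_{\xi_0} \subseteq \bigcup_\xi (A_\xi \triangle B_\xi)$, and the opposite containment is symmetric.

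None of these presents a real obstacle; the only point requiring care is the fourth item, where I must use the single witnessing index $\xi_0$ rather than any global complementation, so that the argument does not secretly rely on $\kappa$ being finite. The ring-theoretic shortcut cleanly dispatches the first two items but does not by itself yield the containments in the last two — those are one-sided inclusions rather than equalities — which is why I would retain the explicit case analysis there.
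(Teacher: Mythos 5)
Your proof is correct; the paper states this lemma without any proof at all, treating it as routine set algebra, and your membership arguments together with the mod-2 indicator-function device are precisely the standard verifications one would supply (the care with a single witnessing index $\xi_0$ in the fourth item is exactly the right touch for arbitrary $\kappa$). One wording nit: in the fourth item, ``the opposite containment is symmetric'' should read ``the other case, $x \in \bigcup_\xi B_\xi \setminus \bigcup_\xi A_\xi$, is handled symmetrically,'' since --- as you yourself note in your closing paragraph --- the reverse inclusion $\bigcup_\xi (A_\xi \triangle B_\xi) \subseteq \bigl[\bigcup_\xi A_\xi\bigr] \triangle \bigl[\bigcup_\xi B_\xi\bigr]$ is false in general, so the phrase as written could be misread as claiming an equality.
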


\begin{theorem}
\label{thm:RSetSigmaAlgebra}
For a Polish space $X$, the collection $EBP(X)$ is a $\sigma$-algebra and is the smallest $\sigma$-algebra containing the open sets and the residually null sets.
\end{theorem}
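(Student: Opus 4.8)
The plan is to verify the three closure axioms directly from the definition and then establish minimality by a short Boolean-algebra argument. The guiding observation is that $A \in EBP(X)$ precisely when $A = U \triangle N$ for some open $U \subseteq X$ and some residually null $N$: if $U \triangle A = N$ is residually null then, by associativity of $\triangle$, $A = U \triangle N$, and the converse is immediate. Throughout I will use freely that the residually null sets form a $\sigma$-ideal (Lemma \ref{lem:countable_additivity} together with the discussion following it) and that every meager set, in particular every nowhere dense set, is residually null (Theorem \ref{thm:meager_is_residually_null}).

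Closure under countable unions is the easy direction. Given $A_n \in EBP(X)$, choose open $U_n$ with $A_n \triangle U_n$ residually null and put $U = \bigcup_n U_n$, which is open. The last item of the preceding symmetric-difference lemma gives
\[
	\left[ \bigcup_n A_n \right] \triangle U \subseteq \bigcup_n (A_n \triangle U_n),
\]
and the right-hand side is a countable union of residually null sets, hence residually null. Thus $\bigcup_n A_n \in EBP(X)$.

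Closure under complementation is where the real work lies, and it is the step I expect to be the main obstacle, since the complement of an open set is closed rather than open. Suppose $A \in EBP(X)$ with $A \triangle U$ residually null, $U$ open. By the first item of the lemma, $A^c \triangle U^c = A \triangle U$ is residually null, but $U^c$ is merely closed. I would therefore replace $U^c$ by its interior $V = \mathrm{int}(U^c)$, which is open, and control the error: since $V \subseteq U^c$, the set $U^c \setminus V$ is exactly the boundary of the closed set $U^c$, which is nowhere dense (any nonempty open subset of it would lie in both $U^c$ and its interior's complement, a contradiction) and hence residually null. The triangle inequality for symmetric differences (second item of the lemma) then yields
\[
	A^c \triangle V \subseteq (A^c \triangle U^c) \cup (U^c \triangle V),
\]
a union of two residually null sets, so $A^c \triangle V$ is residually null and $A^c \in EBP(X)$. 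Since $X$ is open, $X \in EBP(X)$, so $EBP(X)$ is a $\sigma$-algebra; it contains every open set (take $U = A$) and every residually null set (take $U = \emptyset$).

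For minimality, let $\mathcal A$ be any $\sigma$-algebra containing the open sets and the residually null sets. For $A \in EBP(X)$ write $A = U \triangle N$ with $U$ open and $N$ residually null, as above. Because $U \triangle N = (U \cup N) \setminus (U \cap N)$ is built from $U, N \in \mathcal A$ using unions, intersections, and complements, we have $A \in \mathcal A$. Hence $EBP(X) \subseteq \mathcal A$, which, together with the fact that $EBP(X)$ is itself such a $\sigma$-algebra, shows it is the smallest one.
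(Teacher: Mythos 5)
Your proof is correct and follows essentially the same route as the paper: closure under complements via replacing $U^c$ by $\mathrm{int}_X(U^c)$ and absorbing the nowhere dense boundary into the residually null ideal, closure under countable unions via the symmetric-difference inclusion, and minimality by writing $A = U \triangle N$ and noting this lies in any $\sigma$-algebra containing $U$ and $N$. No gaps; the only cosmetic difference is that you make explicit the equivalence between ``$A \triangle U$ residually null'' and ``$A = U \triangle N$,'' which the paper uses implicitly.
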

\begin{proof}
First, we show that the open sets and the residually null sets are in $EBP(X)$.
If $U$ is open, then $U \triangle U = \emptyset$ so $U \in EBP(X)$.
If $A$ is residually null, then $A \triangle \emptyset = A$ so we see that $A \in EBP(X)$.

Now we will see that $EBP(X)$ is a $\sigma$-algebra.
Suppose $A \in EBP(X)$ and let $U$ be open so that $A \triangle U$ is residually null.
Then
\[
	A^c \triangle \text{int}_X(U^c)
	\subseteq (A^c \triangle U^c) \cup ( U^c \triangle \text{int}_X(U^c) )
	= (A \triangle U) \cup ( U^c \setminus \text{int}_X(U^c) ).
\]
Since $U$ is open, $U^c \setminus \text{int}_X(U^c)$ is closed and nowhere dense.
Hence, $A^c \in EBP(X)$.
Now, suppose $\{ A_n : n \in \omega\} \subseteq EBP(X)$ and let $U_n$ be open so that $A_n \triangle U_n$ is residually null for each $n \in \omega$.
Then, observe that
\[
	\left[ \bigcup\{A_n:n\in\omega\} \right] \triangle \left[ \bigcup\{U_n:n\in\omega\} \right] \subseteq \bigcup \{ A_n \triangle U_n : n \in \omega  \},
\]
providing us with the fact that $\bigcup\{A_n:n\in\omega\} \in EBP(X)$.

Now suppose $\mathscr S$ is any $\sigma$-algebra of sets of $X$ containing both the open sets and the residually null sets.
Let $A \in EBP(X)$ and let $U$ be open and $R$ be residually null so that $A \triangle U = R$.
Since $U, R \in \mathscr S$, we see that $A = U \triangle R \in \mathscr S$.
\end{proof}

An immediate observation from Theorems \ref{thm:meager_is_residually_null} and \ref{thm:RSetSigmaAlgebra} is
that $BP(X) \subseteq EBP(X)$.
Also, all residually null sets are EBP-sets which gives us that, by discussion above, the containment
$BP(X) \subseteq EBP(X)$ is proper in all models of ZFC+MA or, less generally, ZFC+CH.

\begin{lemma} \label{lem:R-set_Representations}
Let $X$ be a Polish space and $A \subseteq X$.
Then the following are equivalent:
\begin{enumerate}[label=(\roman*),ref=(\roman*)]
\item \label{RRepFirst}
$A$ is an EBP-set;
\item \label{RRepSecond}
There exists an EBP-set $B$ so that $A \triangle B$ is residually null;
\item \label{RRepThird}
$A = (U \setminus N) \cup M$ where $U$ is open and $N$ and $M$ are both residually null.
\end{enumerate}
\end{lemma}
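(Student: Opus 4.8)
The statement is a cycle of equivalences, so the natural strategy is to prove the implications $(i)\Rightarrow(iii)\Rightarrow(ii)\Rightarrow(i)$. The definition of EBP-set directly packages $(i)$, and the residually null sets form a $\sigma$-ideal (Lemma \ref{lem:countable_additivity}), which is the algebraic fact that makes the symmetric-difference manipulations go through. I expect each implication to be short; the only genuine content is bookkeeping with symmetric differences, for which the displayed set-theoretic lemma preceding this statement is the essential tool.

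First I would prove $(i)\Rightarrow(iii)$. Assuming $A$ is an EBP-set, fix an open set $U$ with $U \triangle A$ residually null, and call this set $R$. The idea is to decompose $R$ into its two pieces $A \setminus U$ and $U \setminus A$, each of which is residually null since each is a subset of $R$ (use the monotonicity half of Lemma \ref{lem:countable_additivity}, or simply that subsets of residually null sets are residually null). Setting $N = U \setminus A$ and $M = A \setminus U$, a routine check gives $A = (U \setminus N) \cup M$: indeed $U \setminus N = U \cap A$ and adjoining $M = A \setminus U$ recovers all of $A$. Thus $A$ has the desired form with $N,M$ residually null.

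Next, $(iii)\Rightarrow(ii)$ is essentially immediate: if $A = (U \setminus N) \cup M$ with $U$ open and $N,M$ residually null, take $B = U$, which is an EBP-set (open sets are EBP-sets by Theorem \ref{thm:RSetSigmaAlgebra}). Then $A \triangle U$ is contained in $N \cup M$ — one should verify this containment by pushing the symmetric difference through the two set operations — and $N \cup M$ is residually null by the countable-additivity part of Lemma \ref{lem:countable_additivity}. Since residually null sets form a $\sigma$-ideal, $A \triangle B$ is residually null, giving $(ii)$.

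Finally, $(ii)\Rightarrow(i)$ is where I would lean on the triangle inequality for symmetric differences. Given an EBP-set $B$ with $A \triangle B$ residually null, choose an open $U$ with $B \triangle U$ residually null. Then the containment $A \triangle U \subseteq (A \triangle B) \cup (B \triangle U)$ exhibits $A \triangle U$ as a subset of a union of two residually null sets, hence residually null, so $A$ is an EBP-set. The main (and only) obstacle is getting the symmetric-difference inclusions exactly right in each step; there is no analytic difficulty, since all the measure-theoretic and topological work has already been done in establishing that the residually null sets form a $\sigma$-ideal and that open sets are EBP-sets.
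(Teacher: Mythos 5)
Your proof is correct, but it is organized differently from the paper's. The paper proves the equivalences hub-and-spoke around (i): it gets (i) $\Rightarrow$ (ii) by taking $B = U$ itself, and then disposes of both converses (ii) $\Rightarrow$ (i) and (iii) $\Rightarrow$ (i) in one stroke by invoking Theorem \ref{thm:RSetSigmaAlgebra}: since $EBP(X)$ is a $\sigma$-algebra containing the open sets and the residually null sets, $A = B \triangle N$ (respectively $A = (U \setminus N) \cup M$) lies in $EBP(X)$ automatically. You instead run the cycle (i) $\Rightarrow$ (iii) $\Rightarrow$ (ii) $\Rightarrow$ (i), and your crucial step (ii) $\Rightarrow$ (i) is done by hand: from $B \in EBP(X)$ choose open $U$ with $B \triangle U$ residually null, then use the inclusion $A \triangle U \subseteq (A \triangle B) \cup (B \triangle U)$ together with the $\sigma$-ideal properties of the residually null sets (Lemma \ref{lem:countable_additivity}). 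This buys you independence from the $\sigma$-algebra theorem: your argument needs only the definition of EBP-set and the $\sigma$-ideal structure, so it would work even if stated before Theorem \ref{thm:RSetSigmaAlgebra} (your one citation of that theorem, to see that open sets are EBP-sets, is unnecessary, since $U \triangle U = \emptyset$ is residually null by definition). The paper's version is shorter given the machinery already in place. Your decomposition in (i) $\Rightarrow$ (iii), taking $N = U \setminus A$ and $M = A \setminus U$, also differs cosmetically from the paper's choice $N = A \triangle U$, $M = N \setminus U$, but both yield $U \setminus N = U \cap A$, so the resulting representations coincide.
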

\begin{proof}
(\ref{RRepFirst} $\Rightarrow$ \ref{RRepSecond})
By definition, if $A$ is an EBP-set, there exists an open set $U$ so that $A \triangle U$ is residually null and $U$ is an EBP-set.

(\ref{RRepSecond} $\Rightarrow$ \ref{RRepFirst})
Let $B$ be an EBP-set so that $A \triangle B =: N$ is residually null.
Then, since $A = B \triangle N$ and the family of EBP-sets is a $\sigma$-algebra by Theorem \ref{thm:RSetSigmaAlgebra}, we see that $A$ is an EBP-set.

(\ref{RRepFirst} $\Rightarrow$ \ref{RRepThird})
Let $U$ be open so that $A \triangle U =: N$ is residually null.
Then notice that, letting $M = N \setminus U$,
\[
	A = U \triangle N = (U\setminus N) \cup (N \setminus U) = (U \setminus N) \cup M,
\]
the desired form.

(\ref{RRepThird} $\Rightarrow$ \ref{RRepFirst})
This follows immediately from Theorem \ref{thm:RSetSigmaAlgebra}.
\end{proof}

\begin{theorem}
All EBP-sets are residually measurable.
\end{theorem}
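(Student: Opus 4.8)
The plan is to argue directly from the definition of an EBP-set, mimicking the proof that Baire property sets are residually measurable. Suppose $A \in EBP(X)$. By definition there is an open set $U \subseteq X$ so that $A \triangle U$ is residually null; that is, $\mathcal N(A \triangle U)$ is co-meager in $\mathcal M(X)$. The strategy is to show that every $\mu$ in this co-meager set is one for which $A$ is $\mu$-measurable, which is exactly what residual measurability demands.

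First I would observe that $U$, being open, is Borel and hence $\mu$-measurable (in the Carath\'eodory sense) for every $\mu \in \mathcal M(X)$. Next, for any $\mu \in \mathcal N(A \triangle U)$ we have by definition $\mu^\ast(A \triangle U) = 0$. This is precisely the hypothesis of Lemma \ref{lem:RelativeMeasurability} with the roles of $A$ and $B$ played by $U$ and $A$: since $U$ is $\mu$-measurable and $\mu^\ast(U \triangle A) = 0$, the lemma yields that $A$ is $\mu$-measurable.

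Finally I would assemble the conclusion: the set of $\mu$ for which $A$ is $\mu$-measurable contains $\mathcal N(A \triangle U)$, which is co-meager by the definition of residually null. Hence $A$ is $\mu$-measurable for a co-meager set of $\mu \in \mathcal M(X)$, so $A$ is residually measurable.

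I do not anticipate a genuine obstacle here; the argument is a short chain through Lemma \ref{lem:RelativeMeasurability}, and the only point requiring any care is the bookkeeping of the symmetric difference $U \triangle A = A \triangle U$ so that the lemma is applied with the measurable set ($U$) in the correct slot. The real content was already invested in Theorem \ref{thm:meager_is_residually_null} and Lemma \ref{lem:RelativeMeasurability}; this theorem simply harvests it by noting that an open set is automatically measurable for every $\mu$, so the defining symmetric difference of an EBP-set transfers measurability on the entire co-meager annihilator.
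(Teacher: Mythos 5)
Your proof is correct and is essentially identical to the paper's: both take the defining open set $U$ with $A \triangle U$ residually null and apply Lemma \ref{lem:RelativeMeasurability} to every $\mu \in \mathcal N(A \triangle U)$ to conclude $A$ is $\mu$-measurable on a co-meager set. The only difference is that you spell out the bookkeeping (that $U$ is $\mu$-measurable for every $\mu$, and which set goes in which slot of the lemma), which the paper leaves implicit.
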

\begin{proof}
Suppose $A \in EBP(X)$ and let $U$ be open so that $U \triangle A$ is residually null.
For any $\mu \in \mathcal N(U \triangle A)$, we see that $A$ is $\mu$-measurable by Lemma \ref{lem:RelativeMeasurability}.
Therefore, $A$ is residually measurable.
\end{proof}

\begin{definition}
For a Polish space $X$, define $R : \wp(X) \mapsto \wp(X)$ by
\[
	R(A) = \bigcup \{ U \text{ open} : U \cap A \text{ is residually null} \}.
\]
\end{definition}
Notice that $R(A)$ is open.
Define $CR(A) = X \setminus R(A)$ and $ICR(A) = \text{int}_X(CR(A))$.
Recall that, inspired by the set function $D$ appearing in \cite[p. 83]{kuratowski},
\[
	M(A) := \bigcup \{ U \text{ open} : U \cap A \text{ is meager}\}
\]
satisfies $M(A) \subseteq R(A)$ as all meager sets are residually null.
From $D(A) := X \setminus M(A)$, we see immediately that $CR(A) \subseteq D(A)$.

\begin{theorem} \label{thm:RNPointsOfARN}
Let $X$ be a Polish space.
For any set $A \subseteq X$, both $A \cap R(A)$ and $A \cap \text{cl}_X(R(A))$ are residually null.
\end{theorem}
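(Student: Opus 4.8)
The plan is to exploit the fact that $R(A)$ is, by its very definition, a union of open sets each of which meets $A$ in a residually null set, and then to reduce this a priori uncountable union to a countable one so that the $\sigma$-ideal structure of the residually null sets applies. First I would record that structure explicitly: by Lemma \ref{lem:countable_additivity} we have $\mathcal N(\bigcup \mathscr E) = \bigcap\{\mathcal N(E) : E \in \mathscr E\}$ for countable $\mathscr E$, and since $\mathcal M(X)$ is Polish (hence Baire) a countable intersection of co-meager sets is co-meager; thus a countable union of residually null sets is residually null. The same lemma gives monotonicity of $\mathcal N$, so any subset of a residually null set is residually null. These two facts are all I need from the $\sigma$-ideal.

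For the first assertion, set $\mathcal U = \{ U \text{ open} : U \cap A \text{ is residually null}\}$, so that $R(A) = \bigcup \mathcal U$ and every member of $\mathcal U$ is contained in $R(A)$. Since $X$ is second countable it is hereditarily Lindel\"of, so the cover $\mathcal U$ of $R(A)$ admits a countable subcover. Concretely, I would fix a countable base $\{B_k : k \in \omega\}$ for $X$ and let $J = \{ k \in \omega : B_k \cap A \text{ is residually null}\}$; using monotonicity one checks that $R(A) = \bigcup\{B_k : k \in J\}$. Then
\[
	A \cap R(A) = \bigcup\{ A \cap B_k : k \in J\}
\]
is a countable union of residually null sets and is therefore residually null.

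For the second assertion I would decompose the closure along the boundary. Because $R(A)$ is open, its boundary $\partial R(A) = \text{cl}_X(R(A)) \setminus R(A)$ is closed with empty interior, hence closed and nowhere dense, so it is meager and therefore residually null by Theorem \ref{thm:meager_is_residually_null}. Writing
\[
	A \cap \text{cl}_X(R(A)) = (A \cap R(A)) \cup (A \cap \partial R(A)),
\]
the first piece is residually null by the previous paragraph and the second is a subset of the residually null set $\partial R(A)$, hence residually null; their union is residually null, as desired.

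The only delicate point is the passage to a countable subcover: the collection $\mathcal U$ is in general uncountable, so countable additivity cannot be invoked against it directly. This is exactly what the second countability of $X$ resolves, either through the Lindel\"of property of the open set $R(A)$ or, as above, through the explicit base enumeration that rewrites $R(A)$ as a countable union of basic open sets each meeting $A$ residually null. Everything else is a routine application of the $\sigma$-ideal structure together with the elementary observation that the boundary of an open set is nowhere dense.
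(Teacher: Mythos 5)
Your proof is correct and follows essentially the same route as the paper's: both rewrite $R(A)$ as a countable union of basic open sets meeting $A$ in a residually null set (using a countable base and monotonicity of $\mathcal N$), apply countable additivity of the residually null $\sigma$-ideal, and then handle $A \cap \text{cl}_X(R(A))$ via the closed nowhere dense boundary $\text{cl}_X(R(A)) \setminus R(A)$ together with Theorem \ref{thm:meager_is_residually_null}.
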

\begin{proof}
Let $\mathcal B$ be a countable base for the topology on $X$ and let
\[
	\mathscr U = \{ B \in \mathcal B : B \cap A \text{ is residually null} \}.
\]
Enumerate $\mathscr U = \{ U_n : n \in \omega \}$ and define $U = \bigcup \mathscr U$.
Surely, $U \subseteq R(A)$.

Suppose $V$ is open so that $V \cap A$ is residually null.
As
\[
	V = \bigcup \{ B \in \mathcal B : B \subseteq V\}
\]
and every $B \in \mathcal B$ so that $B \subseteq V$ satisfies the property that $A \cap B \subseteq A \cap V$ is residually null, we see that $V \subseteq U$.
Hence, $R(A) \subseteq U$ which establishes that $U = R(A)$.

Now, notice that
\[
	A \cap R(A) = A \cap \bigcup \{ U_n : n \in \omega\} = \bigcup \{ A \cap U_n : n \in \omega\}.
\]
Since each $A \cap U_n$ is residually null, we see that $A \cap R(A)$ is residually null.
Moreover, as $R(A)$ is open, $\text{cl}_X( R(A) ) \setminus R(A)$ is closed and nowhere dense.
Therefore,
\[
	A \cap \text{cl}_X(R(A)) \subseteq (A \cap R(A)) \cup (\text{cl}_X( R(A) ) \setminus R(A)),
\]
finishing the proof.
\end{proof}

\begin{lemma} \label{lem:MegaLemma}
Let $X$ be a Polish space, $A,B \subseteq X$, and $A_\xi \subseteq X$ for all $\xi < \kappa$ where $\kappa$ is a cardinal.
\begin{enumerate}[label=(\roman*)]
\item \label{MegaLemma_Monotonicity}
If $A \subseteq B$, then $R(B) \subseteq R(A)$ and, consequently, $CR(A) \subseteq CR(B)$;
\item \label{MegaLemma_Additivity}
$R(A \cup B) = R(A) \cap R(B)$ which provides that
\[
	CR(A\cup B) = CR(A) \cup CR(B);
\]
\item \label{MegaLemma_RelationshipWithClosure}
$X \setminus \text{cl}_X(A) \subseteq R(A)$ which yields $CR(A) \subseteq \text{cl}_X(A)$;
\item \label{MegaLemma_OpenSetClosure}
If $U$ is open, then $CR(U) = \text{cl}_X(U)$.
Moreover, $U \subseteq ICR(U)$ and both $ICR(U) \setminus U$ and $CR(U) \setminus U$ are residually null;
\item \label{MegaLemma_RNiffCREmpty}
$A$ is residually null if and only if $CR(A) = \emptyset$;
\item \label{MegaLemma_AMinusCR}
$A \setminus CR(A)$ is residually null; 
\item \label{MegaLemma_AMinusICR}
$A \setminus ICR(A)$ is residually null; 
\item \label{MegaLemma_Differences}
$CR(A) \setminus CR(B) \subseteq CR(A\setminus B)$;
\item \label{MegaLemma_MonotonicityOverIntersections}
$CR\left( \bigcap\{A_\xi : \xi < \kappa\} \right) \subseteq \bigcap \{ CR(A_\xi) :\xi < \kappa\}$;
\item \label{MegaLemma_MonotonicityOverUnions}
$\bigcup \{ CR(A_\xi) : \xi < \kappa \} \subseteq CR \left( \bigcup\{A_\xi : \xi < \kappa\} \right)$;
\item \label{MegaLemma_IdempotenceOfCR}
$CR(CR(A)) = CR(A)$;
\item \label{MegaLemma_CRMinusA}
$A$ is an EBP-set if and only if $CR(A) \setminus A$ is residually null;
\item \label{MegaLemma_CRTriangleA}
$A$ is an EBP-set if and only if $CR(A) \triangle A$ is residually null;
\item \label{MegaLemma_ICRMinusA}
$A$ is an EBP-set if and only if $ICR(A) \setminus A$ is residually null;
\item \label{MegaLemma_ICRTriangleA}
$A$ is an EBP-set if and only if $ICR(A) \triangle A$ is residually null;
\item \label{MegaLemma_RegularityOfCR}
$CR(A) =\text{cl}_X(ICR(A))$.
\end{enumerate}
\end{lemma}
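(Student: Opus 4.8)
The plan is to prove the two inclusions separately. The inclusion $\text{cl}_X(ICR(A)) \subseteq CR(A)$ is immediate: since $ICR(A) = \text{int}_X(CR(A)) \subseteq CR(A)$ and $CR(A) = X \setminus R(A)$ is closed (as $R(A)$ is open), taking closures gives $\text{cl}_X(ICR(A)) \subseteq \text{cl}_X(CR(A)) = CR(A)$. (Alternatively, since $ICR(A)$ is open, part \ref{MegaLemma_OpenSetClosure} gives $CR(ICR(A)) = \text{cl}_X(ICR(A))$, and monotonicity \ref{MegaLemma_Monotonicity} together with idempotence \ref{MegaLemma_IdempotenceOfCR} yields $\text{cl}_X(ICR(A)) = CR(ICR(A)) \subseteq CR(CR(A)) = CR(A)$, but the direct argument is shorter.)

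For the reverse inclusion $CR(A) \subseteq \text{cl}_X(ICR(A))$ I would argue by contradiction. Suppose $x \in CR(A)$ but $x \notin \text{cl}_X(ICR(A))$. Since $\text{cl}_X(ICR(A))$ is closed, there is an open set $V$ with $x \in V$ and $V \cap \text{cl}_X(ICR(A)) = \emptyset$; in particular $V \cap ICR(A) = \emptyset$. My target is to show that $V \cap A$ is residually null, for then $V$ is one of the open sets appearing in the union defining $R(A)$, so $V \subseteq R(A)$ and hence $x \in R(A) = X \setminus CR(A)$, contradicting $x \in CR(A)$. To obtain this, I would split
\[
	V \cap A = (V \cap A \cap R(A)) \cup (V \cap A \cap CR(A)).
\]
The first piece is contained in $A \cap R(A)$, which is residually null by Theorem \ref{thm:RNPointsOfARN} (this is also recorded as part \ref{MegaLemma_AMinusCR}). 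The second piece is contained in $V \cap CR(A)$, so it suffices to show the latter is residually null; since the residually null sets form a $\sigma$-ideal by Lemma \ref{lem:countable_additivity}, $V \cap A$ will then be residually null as well.

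The crux, and the step I expect to require the most care, is verifying that $V \cap CR(A)$ is nowhere dense. Its closure lies in $\text{cl}_X(V) \cap CR(A)$, and I would show this set has empty interior: any nonempty open $O \subseteq \text{cl}_X(V) \cap CR(A)$ is an open subset of the closed set $CR(A)$, so $O \subseteq \text{int}_X(CR(A)) = ICR(A)$, while $O \subseteq \text{cl}_X(V)$ being open and nonempty forces $O \cap V \neq \emptyset$, producing a nonempty subset of $V \cap ICR(A) = \emptyset$, a contradiction. Hence $V \cap CR(A)$ is nowhere dense, so meager, so residually null by Theorem \ref{thm:meager_is_residually_null}. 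This is the only place where the topological structure (rather than pure $\sigma$-ideal bookkeeping with annihilators) genuinely enters, and it is exactly where the choice of $V$ disjoint from $ICR(A) = \text{int}_X(CR(A))$ is essential. With $V \cap A$ shown residually null the contradiction is reached, establishing $CR(A) \subseteq \text{cl}_X(ICR(A))$ and hence the claimed equality.
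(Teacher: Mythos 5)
Your proposal addresses only item \ref{MegaLemma_RegularityOfCR} of Lemma \ref{lem:MegaLemma}, taking the earlier items as available (which is legitimate, since nothing you cite depends on \ref{MegaLemma_RegularityOfCR}); the other fifteen items would still need proofs if the whole lemma were the target. For item \ref{MegaLemma_RegularityOfCR} itself your argument is correct, and it takes a genuinely different route from the paper's. The easy inclusion $\text{cl}_X(ICR(A)) \subseteq CR(A)$ is handled the same way in both (closedness of $CR(A)$). For the hard inclusion, the paper proves a general topological inequality --- for every $E \subseteq X$, $CR(E) \subseteq \text{cl}_X(\text{int}_X(\text{cl}_X(E)))$, because the open complement of the right-hand side meets $E$ only inside the nowhere dense set $\text{cl}_X(E) \setminus \text{int}_X(\text{cl}_X(E))$ and hence lies in $R(E)$ --- and then specializes to $E = CR(A)$, using closedness of $CR(A)$ to collapse the right-hand side to $\text{cl}_X(ICR(A))$ and idempotence \ref{MegaLemma_IdempotenceOfCR} to replace $CR(CR(A))$ by $CR(A)$. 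You instead argue by contradiction at a point $x \in CR(A) \setminus \text{cl}_X(ICR(A))$: you choose an open $V \ni x$ disjoint from $\text{cl}_X(ICR(A))$, split $V \cap A$ along the partition $X = R(A) \cup CR(A)$, dispose of the first piece by Theorem \ref{thm:RNPointsOfARN}, and of the second by showing $V \cap CR(A)$ is nowhere dense. Incidentally, your ``crux'' paragraph can be shortened: since $V \cap ICR(A) = \emptyset$, you have $V \cap CR(A) \subseteq CR(A) \setminus ICR(A)$, which is the boundary of the closed set $CR(A)$ and hence already closed and nowhere dense; your interior-emptiness argument re-proves exactly this, so in substance your topological step is the paper's boundary observation localized at $V$. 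The trade-off between the two routes: the paper's factoring is shorter given item \ref{MegaLemma_IdempotenceOfCR} and isolates a reusable inequality valid for all sets, while your proof never invokes idempotence (nor items \ref{MegaLemma_Differences} and \ref{MegaLemma_RNiffCREmpty} on which its proof rests), needing only Theorem \ref{thm:RNPointsOfARN}, Theorem \ref{thm:meager_is_residually_null}, and the $\sigma$-ideal property of residually null sets, and it makes explicit the single place where topology genuinely enters.
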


\begin{proof}
\ref{MegaLemma_Monotonicity}
Notice that $R(B)$ is open and $R(B) \cap A \subseteq R(B) \cap B$, the latter of which is residually null.
It follows that $R(B) \subseteq R(A)$ and, therefore, that $CR(A) \subseteq CR(B)$.

\ref{MegaLemma_Additivity}
From \ref{MegaLemma_Monotonicity} we see that $R(A\cup B) \subseteq R(A) \cap R(B)$.
Now, observe that
\begin{align*}
	(A\cup B) \cap (R(A)\cap R(B))
	&= (A \cap (R(A)\cap R(B))) \cup (B \cap (R(A)\cap R(B)))\\
	&\subseteq (A \cap R(A)) \cup (B\cap R(B)),
\end{align*}
the last of which is residually null.
Hence, $R(A) \cap R(B) \subseteq R(A\cup B)$.
Consequently, $CR(A \cup B) = CR(A) \cup CR(B)$.

\ref{MegaLemma_RelationshipWithClosure}
Notice that $X \setminus \text{cl}_X(A)$ is open and that $A \cap (X \setminus \text{cl}_X(A)) = \emptyset$ which is surely residually null.
Hence, $X \setminus \text{cl}_X(A) \subseteq R(A)$ which provides us with the fact that $CR(A) \subseteq \text{cl}_X(A)$.

\ref{MegaLemma_OpenSetClosure}
Let $U$ be open.
Since any non-empty open set is not residually null, we see that $R(U) = X \setminus \text{cl}_X(U)$.
That is, $CR(U) = \text{cl}_X(U)$.
Obviously, $U \subseteq \text{cl}_X(U) = CR(U)$ which implies that $U \subseteq ICR(U)$.
The rest follows from the fact that $CR(U) \setminus U = \text{cl}_X(U) \setminus U$, which is closed and nowhere dense.

\ref{MegaLemma_RNiffCREmpty}
If $A$ is residually null, $R(A) = X$ which implies that $CR(A) = \emptyset$.
If $CR(A) = \emptyset$, then $R(A) = X$ so we see that $A = R(A) \cap A$ which provides us with the fact that $A$ is residually null.

\ref{MegaLemma_AMinusCR}
Notice that $A \setminus CR(A) = A \cap R(A)$ which is residually null by Theorem \ref{thm:RNPointsOfARN}.

\ref{MegaLemma_AMinusICR}
Observe that $A \setminus ICR(A) \subseteq (A \setminus CR(A)) \cup (CR(A)\setminus ICR(A))$ and that $CR(A)\setminus ICR(A)$ is closed and nowhere dense.
So \ref{MegaLemma_AMinusCR} guarantees that $A \setminus ICR(A)$ is residually null

\ref{MegaLemma_Differences}
Using \ref{MegaLemma_Monotonicity} and \ref{MegaLemma_Additivity}, notice that
\begin{align*}
	CR(A)
	&= CR((A\setminus B) \cup (A\cap B))\\
	&= CR(A\setminus B) \cup CR(A\cap B)\\
	&\subseteq CR(A \setminus B) \cup CR(B).
\end{align*}
Hence,
\[
	CR(A) \setminus CR(B) \subseteq CR(A\setminus B).
\]

Both \ref{MegaLemma_MonotonicityOverIntersections} and \ref{MegaLemma_MonotonicityOverUnions} follow immediately from \ref{MegaLemma_Monotonicity}.

\ref{MegaLemma_IdempotenceOfCR}
From \ref{MegaLemma_RelationshipWithClosure} we see that $CR(CR(A)) \subseteq \text{cl}_X(CR(A)) = CR(A)$.
From \ref{MegaLemma_Differences} and \ref{MegaLemma_AMinusCR} we see that $CR(A) \setminus CR(CR(A)) \subseteq CR(A \setminus CR(A)) = \emptyset$.
Hence, $CR(A) \subseteq CR(CR(A))$ establishing that $CR(A) = CR(CR(A))$.

\ref{MegaLemma_CRMinusA}
Surely, if $CR(A)\setminus A$ is residually null, then
\[
	CR(A) \triangle A = (CR(A)\setminus A) \cup (A \setminus CR(A))
\]
is residually null using \ref{MegaLemma_AMinusCR}.
Since $CR(A)$ is closed, $A$ is an EBP-set.

Now, suppose $A$ is an EBP-set and, appealing to Lemma \ref{lem:R-set_Representations}, write $A = (U \setminus N) \cup M$ where $U$ is open and both $N$ and $M$ are residually null.
Notice that $CR(U) \setminus CR(N) \subseteq CR(U\setminus N) \subseteq CR(U)$ by \ref{MegaLemma_Differences} and \ref{MegaLemma_Monotonicity} and that $CR(M) = CR(N) = \emptyset$ by \ref{MegaLemma_RNiffCREmpty}.
Hence, $CR(U) = CR(U \setminus N)$.
It follows that, capitalizing on \ref{MegaLemma_Additivity},
\[
	CR(A) = CR( (U \setminus N) \cup M ) = CR(U).
\]
Now,
\begin{align*}
	CR(A) \setminus A
	&= CR(U) \setminus ((U \setminus N) \cup M)\\
	&= CR(U) \cap ((U \cap N^c)^c \cap M^c)\\
	&= CR(U) \cap (U^c \cup N) \cap M^c\\
	&= [(CR(U)\setminus U) \cup (CR(U) \cap N)] \cap M^c\\
	&\subseteq (CR(U) \setminus U) \cup N.
\end{align*}
By \ref{MegaLemma_OpenSetClosure}, $CR(U) \setminus U$ is meager so we see that $CR(A) \setminus A$ is residually null.

\ref{MegaLemma_CRTriangleA}
Combine \ref{MegaLemma_AMinusCR} and \ref{MegaLemma_CRMinusA}.

\ref{MegaLemma_ICRMinusA}
If $ICR(A)\setminus A$ is residually null, then
\[
	ICR(A) \triangle A = (ICR(A)\setminus A) \cup (A \setminus ICR(A))
\]
is residually null using \ref{MegaLemma_AMinusICR}.
As $ICR(A)$ is open, $A$ is an EBP-set.

Now, suppose $A$ is an EBP-set.
Then $CR(A) \setminus A$ is residually null by \ref{MegaLemma_CRMinusA}.
Since $ICR(A) \setminus A \subseteq CR(A) \setminus A$, we have that $ICR(A) \setminus A$ is residually null.

\ref{MegaLemma_ICRTriangleA}
Combine \ref{MegaLemma_AMinusICR} and \ref{MegaLemma_ICRMinusA}.

\ref{MegaLemma_RegularityOfCR}
Notice that, for $E \subseteq X$,
\begin{align*}
	E \cap (\text{cl}_X(\text{int}_X(\text{cl}_X(E))))^c
	&\subseteq \text{cl}_X(E) \cap (\text{int}_X(\text{cl}_X(E)))^c\\
	&= \text{cl}_X(E) \setminus \text{int}_X(\text{cl}_X(E))
\end{align*}
and the last set is closed and nowhere dense.
It follows that
\[
	 (\text{cl}_X(\text{int}_X(\text{cl}_X(E))))^c \subseteq R(E) \Longrightarrow CR(E) \subseteq \text{cl}_X(\text{int}_X(\text{cl}_X(E))).
\]
Now, with \ref{MegaLemma_IdempotenceOfCR} and the fact that $CR(A)$ is closed,
\[
	CR(A)
	= CR(CR(A))
	\subseteq \text{cl}_X(\text{int}_X(CR(A)))
	= \text{cl}_X(ICR(A))
	\subseteq CR(A).
\]
Conclusively, $\text{cl}_X(ICR(A)) = CR(A)$.
\end{proof}

\begin{corollary} \label{cor:ICRSeparation}
Suppose $X$ is Polish and that $A$ and $B$ are EBP-sets.
Then
\[
	ICR(A) \cap ICR(B) \neq \emptyset \Longrightarrow A \cap B \neq \emptyset.
\]
\end{corollary}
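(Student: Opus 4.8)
The plan is to convert the hypothesis $ICR(A) \cap ICR(B) \neq \emptyset$ into the statement that a certain non-empty open set is residually null, contradicting Lemma \ref{lem:open_positive_dense}. Accordingly, set $V := ICR(A) \cap ICR(B)$. Since $ICR(A)$ and $ICR(B)$ are interiors, they are open, so $V$ is open, and by hypothesis $V \neq \emptyset$.

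The first step is to apply part \ref{MegaLemma_ICRMinusA} of Lemma \ref{lem:MegaLemma}: because $A$ and $B$ are EBP-sets, both $ICR(A) \setminus A$ and $ICR(B) \setminus B$ are residually null. Intersecting with $V$ only shrinks these sets, so $V \setminus A \subseteq ICR(A) \setminus A$ and $V \setminus B \subseteq ICR(B) \setminus B$ are each residually null. Using the set identity $V \setminus (A \cap B) = (V \setminus A) \cup (V \setminus B)$ together with the fact that the residually null sets form a $\sigma$-ideal (hence are closed under finite unions, via Lemma \ref{lem:countable_additivity}), I would conclude that $V \setminus (A \cap B)$ is residually null.

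Finally, suppose toward a contradiction that $A \cap B = \emptyset$. Then $V \setminus (A \cap B) = V$, so the non-empty open set $V$ would be residually null, contradicting Lemma \ref{lem:open_positive_dense}. Therefore $A \cap B \neq \emptyset$, as desired.

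I do not anticipate a genuine obstacle here; the argument is short once the pieces are assembled. The only conceptual point worth flagging is that the whole proof pivots on Lemma \ref{lem:open_positive_dense}, which is precisely the mechanism that translates the measure-theoretic ``smallness'' of the sets $ICR(A) \setminus A$ and $ICR(B) \setminus B$ back into a topological non-emptiness conclusion about $A \cap B$.
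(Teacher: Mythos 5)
Your proof is correct and is essentially the paper's argument: both rest on Lemma \ref{lem:MegaLemma} \ref{MegaLemma_ICRMinusA} to see that $ICR(A)\setminus A$ and $ICR(B)\setminus B$ are residually null, use the $\sigma$-ideal property to cover $ICR(A)\cap ICR(B)$ by their union when $A\cap B=\emptyset$, and then invoke Lemma \ref{lem:open_positive_dense} to conclude that this non-empty open set cannot be residually null. The only difference is cosmetic (contradiction versus contrapositive, and your identity $V\setminus(A\cap B)=(V\setminus A)\cup(V\setminus B)$ in place of the paper's displayed inclusion).
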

\begin{proof}
Suppose $A \cap B = \emptyset$ and notice that
\begin{align*}
	ICR(A) \cap ICR(B)
	&\subseteq [(ICR(A) \setminus A)\cup A] \cap [(ICR(B) \setminus B)\cup B]\\
	&\subseteq (ICR(A)\setminus A) \cup (ICR(B) \setminus B).
\end{align*}
Since $ICR(A) \cap ICR(B)$ is an open set and Lemma \ref{lem:MegaLemma} \ref{MegaLemma_ICRMinusA} provides us with the fact that both $ICR(A) \setminus A$ and $ICR(B)\setminus B$ are residually null, we see that $ICR(A) \cap ICR(B) = \emptyset$.
\end{proof}

\begin{lemma} \label{lem:NiceSets}
Let $X$ be a Polish space and $A \subseteq X$.
Then $A \subseteq CR(A)$ if and only if, for every open set $U \subseteq X$ with $U \cap A \neq \emptyset$, $U \cap A$ is not residually null in $X$.
\end{lemma}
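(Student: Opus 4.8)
The plan is to reduce everything to the single observation that $A \subseteq CR(A)$ is equivalent to $A \cap R(A) = \emptyset$, which is immediate from the definition $CR(A) = X \setminus R(A)$. Once this reformulation is in hand, both implications follow by unwinding the definition $R(A) = \bigcup\{U \text{ open} : U \cap A \text{ is residually null}\}$, so the whole argument is a matter of careful definition-chasing rather than any substantial machinery.

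For the forward direction, I would argue by contradiction: assume $A \cap R(A) = \emptyset$ and suppose that some open set $U$ has $U \cap A \neq \emptyset$ with $U \cap A$ residually null. By the very definition of $R(A)$ as a union over all such open sets, this $U$ satisfies $U \subseteq R(A)$, whence $U \cap A \subseteq A \cap R(A) = \emptyset$, contradicting $U \cap A \neq \emptyset$. Thus no such $U$ can exist, which is exactly the right-hand condition.

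For the backward direction, I would use the contrapositive: suppose $A \not\subseteq CR(A)$, so there is a point $x \in A \cap R(A)$. The structural feature I will exploit is that $R(A)$ is a \emph{union} of open sets, so membership $x \in R(A)$ produces a single open set $U$ with $x \in U$ and $U \cap A$ residually null. Since $x \in A \cap U$, we have $U \cap A \neq \emptyset$ while $U \cap A$ is residually null, and this violates the right-hand condition. Hence the right-hand condition forces $A \subseteq CR(A)$.

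I do not anticipate a genuine obstacle here; the only point requiring a little care is the backward direction, where one must remember that $R(A)$ is literally the union of its witnessing open sets, so that any of its points lies inside one such witness — this is precisely what lets us pass from $x \in R(A)$ to an actual open neighborhood $U$ of $x$ with $U \cap A$ residually null. (One could alternatively invoke the refinement established in the proof of Theorem \ref{thm:RNPointsOfARN}, that $R(A)$ coincides with the union of the basic open sets $B$ with $B \cap A$ residually null, but this is not needed for the argument.)
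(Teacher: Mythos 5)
Your proof is correct and takes essentially the same approach as the paper: your forward direction is exactly the paper's argument (a witnessing open $U$ lies inside $R(A)$, so $U \cap A \subseteq A \cap R(A)$), and your backward direction, done by contrapositive via extracting a witnessing open set for a point of $A \cap R(A)$, is just a cosmetic variant of the paper's direct computation that the hypothesis forces $R(A) = X \setminus \text{cl}_X(A)$, hence $A \subseteq \text{cl}_X(A) = CR(A)$. There are no gaps.
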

\begin{proof}
Suppose that, for each open set $U \subseteq X$, $U \cap A \neq \emptyset$ implies that $U \cap A$ is not residually null.
Then, $R(A) = X \setminus \text{cl}_X(A)$.
Hence, $A \subseteq \text{cl}_X(A) \subseteq CR(A)$.

Now, assume there is an open set $U\subseteq X$ so that $U \cap A \neq \emptyset$ but $U \cap A$ is residually null.
It follows that $A \cap U \subseteq U \subseteq R(A)$.
For $x \in A \cap U$, $x \in A \cap R(A)$.
That is, $A \not\subseteq CR(A)$.
\end{proof}

A few of the results that follow generalize some of those found in \cite{Cohen}.

\begin{theorem} \label{thm:ContinuityWithNiceSets}
Suppose $X$ is a Polish space, $( Y, \rho )$ is a metric space, and $f : X \mapsto Y$ satisfies the following:
\begin{itemize}
\item $f^{-1}[B] \in EBP(X)$ for each open ball $B \subseteq Y$;
\item $f^{-1}[V] \subseteq CR(f^{-1}[V])$ for all open $V \subseteq Y$.
\end{itemize}
Then the points of continuity of $f$ form a dense $G_\delta$ subset of $X$.
\end{theorem}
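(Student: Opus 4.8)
The plan is to control the oscillation of $f$. For $x \in X$ set $\operatorname{osc}_f(x) = \inf\{\operatorname{diam} f[U] : U \ni x \text{ open}\}$; since $(Y,\rho)$ is metric, $f$ is continuous at $x$ exactly when $\operatorname{osc}_f(x)=0$, and each set $D_\varepsilon := \{x : \operatorname{osc}_f(x) < \varepsilon\}$ is open. Hence the set of continuity points equals $\bigcap_{n\ge 1} D_{1/n}$ and is automatically a $G_\delta$. Because $X$ is Polish and therefore Baire, it will remain only to prove that each $D_\varepsilon$ is dense. I would reduce this to the following: for every nonempty open $W \subseteq X$ and every $\varepsilon>0$ there is a nonempty open $W_0 \subseteq W$ with $\operatorname{diam} f[W_0] \le \varepsilon/2$, since then $W_0 \subseteq D_\varepsilon \cap W$.

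To construct $W_0$, I would fix a nonempty open $W$ and $\varepsilon>0$, choose any $x_0 \in W$, put $y_0 = f(x_0)$, and set $B = B(y_0,\varepsilon/4)$ and $A = f^{-1}[B]$. By hypothesis $A \in EBP(X)$ and $A \subseteq CR(A)$. Since $x_0 \in W \cap A$, Lemma \ref{lem:NiceSets} (applied to the open set $W$) guarantees that $W \cap A$ is not residually null. Writing $W \cap A = (W \cap A \cap ICR(A)) \cup (W \cap (A \setminus ICR(A)))$ and using that $A \setminus ICR(A)$ is residually null by Lemma \ref{lem:MegaLemma}\ref{MegaLemma_AMinusICR}, together with the fact that the residually null sets form a $\sigma$-ideal (Lemma \ref{lem:countable_additivity}), the piece $W \cap A \cap ICR(A)$ cannot be residually null and is in particular nonempty. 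Thus $W_0 := W \cap ICR(A)$ is a nonempty open subset of $W$.

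The main obstacle is that $W_0$ need not be contained in $A$: all that is controlled a priori is that $W_0 \setminus A \subseteq ICR(A)\setminus A$ is residually null (Lemma \ref{lem:MegaLemma}\ref{MegaLemma_ICRMinusA}, using $A \in EBP(X)$), and a residually null — but possibly nonempty — exceptional set could in principle push $\operatorname{diam} f[W_0]$ past $\varepsilon$. This is exactly where the ``nice set'' hypothesis must do its work. Suppose some $x_1 \in W_0$ had $\rho(f(x_1),y_0) > \varepsilon/4$; put $y_1 = f(x_1)$, $r = \rho(y_0,y_1)-\varepsilon/4 > 0$, and $V' = B(y_1,r)$. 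The triangle inequality makes $V'$ disjoint from $B$, so $f^{-1}[V'] \cap A = \emptyset$ and therefore $W_0 \cap f^{-1}[V'] \subseteq W_0 \setminus A$ is residually null. But $x_1 \in W_0 \cap f^{-1}[V'] \ne \emptyset$, and since $f^{-1}[V'] \subseteq CR(f^{-1}[V'])$, Lemma \ref{lem:NiceSets} forces $W_0 \cap f^{-1}[V']$ to not be residually null — a contradiction.

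Hence no such $x_1$ exists, so $f[W_0] \subseteq \{y : \rho(y,y_0) \le \varepsilon/4\}$, giving $\operatorname{diam} f[W_0] \le \varepsilon/2 < \varepsilon$. This completes the construction of $W_0$, establishes density of each $D_\varepsilon$, and by the reduction in the first paragraph yields that the continuity points form a dense $G_\delta$. I expect the only delicate point to be the disjoint-balls argument of the third paragraph, which is what converts the merely ``residually null up to exceptions'' control coming from $EBP(X)$ into genuine containment of $f[W_0]$ in a small ball; the rest is a routine oscillation-plus-Baire-category argument.
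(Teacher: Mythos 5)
Your proposal is correct and follows essentially the same route as the paper: both proofs run a Baire-category/oscillation scheme built on the open sets $ICR(f^{-1}[B])$ for small balls $B$, get their density (locally in your case, globally in the paper's) from the hypothesis $f^{-1}[V] \subseteq CR(f^{-1}[V])$ together with the residual nullity of $A \setminus ICR(A)$ or the regularity $CR(A) = \text{cl}_X(ICR(A))$, and bound the oscillation by a disjoint-balls contradiction. The only cosmetic difference is that you close that contradiction by applying Lemma \ref{lem:NiceSets} to the preimage of the disjoint ball (using that $ICR(A)\setminus A$ is residually null since $A \in EBP(X)$), whereas the paper invokes Corollary \ref{cor:ICRSeparation}; both rest on the same underlying machinery from Lemma \ref{lem:MegaLemma}.
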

\begin{proof}
Fix $n \in \omega$ and, for $x \in X$, let $V_x = B_\rho \left( f(x), 2^{-(n+1)} \right)$ and $U_x = ICR(f^{-1}[V_x])$.
Then define $\mathcal U_n = \bigcup \{ U_x : x \in X\}$ and notice that $\mathcal U_n$ is open.
We will endeavor to show $\mathcal U_n$ is dense.
Let $W \subseteq X$ be a non-empty open set and pick $x \in W$.
Observe that
\[
	x \in f^{-1}[V_x] \subseteq CR(f^{-1}[V_x]).
\]
It follows that $W \cap CR(f^{-1}[V_x]) \neq \emptyset$ and is relatively open in $CR(f^{-1}[V_x])$.
By Lemma \ref{lem:MegaLemma} \ref{MegaLemma_RegularityOfCR}, we know that $U_x$ is dense in $CR(f^{-1}[V_x])$.
Hence, $W \cap U_x \neq \emptyset$.
It follows that $\mathcal U_n$ is open and dense in $X$.

Next, we'll show that $\text{osc}_f(x) \leq 2^{-n}$ at each point $x \in \mathcal U_n$.
Towards this end, we will first see that, for any open ball $V \subseteq Y$ and $x \in {ICR}(f^{-1}[V])$, $f(x) \in \text{cl}_Y(V)$.
Otherwise, there is an open ball $W$ about $f(x)$ so that $W \cap V = \emptyset$.
Since $x \in f^{-1}[W] \subseteq CR(f^{-1}[W])$, we see that
\[
	{ICR}(f^{-1}[V]) \cap CR(f^{-1}[W]) \neq \emptyset.
\]
Alas, as $ICR(f^{-1}[W])$ is dense in $CR(f^{-1}[W])$,
\[
	{ICR}(f^{-1}[V]) \cap {ICR}(f^{-1}[W]) \neq \emptyset.
\]
Since $f^{-1}[V]$ and $f^{-1}[W]$ are EBP-sets, Lemma \ref{cor:ICRSeparation} implies that $f^{-1}[V] \cap f^{-1}[W] \neq \emptyset$ which provides $V \cap W \neq \emptyset$, a contradiction.

Now, let $x \in \mathcal U_n$ and pick $w \in X$ so that $x \in U_w$.
By the above paragraph, for $y, z \in U_w$, we have that $f(y), f(z) \in \text{cl}_Y(V_w)$.
It follows that
\[
	\rho(f(y),f(z)) \leq \rho(f(y),f(w)) + \rho(f(w),f(z)) \leq \frac{1}{2^{n+1}} + \frac{1}{2^{n+1}} = \frac{1}{2^n}.
\]
Since $x \in \mathcal U_n$ was arbitrary, we see that $\text{osc}_f(x) \leq 2^{-n}$ for every point $x \in \mathcal U_n$.

Finally, $\mathcal U = \bigcap\{ \mathcal U_n : n \in \omega\}$ is a dense $G_\delta$ for which $f$ has zero oscillation for every point of $\mathcal U$.
That is, $f$ is continuous at every point of $\mathcal U$.
\end{proof}

\begin{definition}
Let $X$ be a Polish space, $Y$ be a topological space, and $f : X \mapsto Y$ be a function.
We say that $f$ is EBP-measurable if $f^{-1}[U]$ is an EBP-set for every open set $U\subseteq Y$.
\end{definition}

\begin{theorem} \label{thm:R-measurableAndSecondCountable}
Let $X$ be a Polish space, $Y$ be a topological space with a countable
basis, and $f : X \mapsto Y$ be EBP-measurable.
Then, there exists a set $A \subseteq X$ which is co-residually null so that $f \restriction_A$ is continuous.
\end{theorem}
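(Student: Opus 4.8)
The plan is to mirror the classical proof that a Baire-measurable map into a second-countable space is continuous off a meager set, but with ``meager'' replaced throughout by ``residually null.'' Let $\{V_n : n \in \omega\}$ be a countable basis for $Y$. Since $f$ is EBP-measurable, each $f^{-1}[V_n]$ is an EBP-set, so by Lemma \ref{lem:MegaLemma}\ref{MegaLemma_ICRTriangleA} the symmetric difference $ICR(f^{-1}[V_n]) \triangle f^{-1}[V_n]$ is residually null. The natural candidate for the bad set is the union of these symmetric differences: set
\[
	N = \bigcup\{ ICR(f^{-1}[V_n]) \triangle f^{-1}[V_n] : n \in \omega\}.
\]
Because the residually null sets form a $\sigma$-ideal (Lemma \ref{lem:countable_additivity} together with Theorem \ref{thm:meager_is_residually_null}), $N$ is residually null, and I would take $A = X \setminus N$, which is co-residually null by definition.

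The heart of the argument is then to verify that $f\restriction_A$ is continuous. Fix $x \in A$ and an open $V \subseteq Y$ with $f(x) \in V$; choose a basic $V_n$ with $f(x) \in V_n \subseteq V$. The goal is to produce a relatively open neighborhood of $x$ in $A$ whose image lands in $V_n$. Since $x \in f^{-1}[V_n]$ and $x \notin N$, we have $x \in ICR(f^{-1}[V_n])$, an \emph{open} set in $X$; this open set $ICR(f^{-1}[V_n])$ is the candidate neighborhood. The key containment to establish is
\[
	ICR(f^{-1}[V_n]) \cap A \subseteq f^{-1}[V_n],
\]
which would show $f$ maps the relatively open set $ICR(f^{-1}[V_n]) \cap A$ into $V_n \subseteq V$, giving continuity of $f\restriction_A$ at $x$. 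This containment is immediate from the construction: if $y \in ICR(f^{-1}[V_n]) \cap A$, then $y \notin N$ forces $y \notin ICR(f^{-1}[V_n]) \triangle f^{-1}[V_n]$, and since $y$ \emph{is} in $ICR(f^{-1}[V_n])$, it must be in $f^{-1}[V_n]$.

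I expect the main obstacle to be purely bookkeeping rather than conceptual: one must be careful that the neighborhood $ICR(f^{-1}[V_n])$ genuinely contains $x$ \emph{relative to} $A$, and that a single choice of basic open $V_n \subseteq V$ containing $f(x)$ suffices. The only subtlety is to confirm that $A$ meets every such $ICR(f^{-1}[V_n])$ in a way that realizes $x$ as an interior point in $A$; but since $x \in ICR(f^{-1}[V_n])$ and this set is open in $X$, $ICR(f^{-1}[V_n]) \cap A$ is automatically a relative neighborhood of $x$ in $A$, so no further density argument (of the sort used in Theorem \ref{thm:ContinuityWithNiceSets}) is needed here. The second-countability of $Y$ is used exactly once and essentially: it is what allows the single residually null exceptional set $N$ to be assembled as a \emph{countable} union, so that it lies in the $\sigma$-ideal of residually null sets.
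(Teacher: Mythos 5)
Your proof is correct and follows essentially the same route as the paper's: approximate each $f^{-1}[V_n]$ by an open set up to a residually null error, discard the countable union of errors (a residually null set), and observe that on the complement the preimages of basic open sets become relatively open. The only cosmetic difference is that you take $ICR(f^{-1}[V_n])$ via Lemma \ref{lem:MegaLemma}\ref{MegaLemma_ICRTriangleA} as the open approximant and check continuity pointwise, whereas the paper uses the representation $f^{-1}[V_n] = (U_n \setminus N_n) \cup M_n$ from Lemma \ref{lem:R-set_Representations} and verifies the equality $f^{-1}[V_n] \cap A = U_n \cap A$ directly.
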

\begin{proof}
Let $\{ V_n : n \in \omega\}$ be a basis for $Y$ and notice that $f^{-1}[V_n]$ is an EBP-set for each $n \in \omega$ by hypothesis.
Hence, by Lemma \ref{lem:R-set_Representations}, we can write, for each $n \in \omega$,
\[
	f^{-1}[V_n] = (U_n \setminus N_n) \cup M_n
\]
where $U_n$ is open and both $N_n$ and $M_n$ are residually null.
Let
\begin{align*}
	A
	&= \left[ X\setminus \bigcup\{N_n : n\in\omega\} \right] \cap \left[  X\setminus \bigcup\{M_n : n\in\omega\} \right]\\
	&= X \setminus \left[ \bigcup\{N_n: n \in \omega\} \cup \bigcup\{M_n: n \in \omega\}\right]
\end{align*}
and notice that $A$ is co-residually null.
Now, to see that $f\restriction_A$ is continuous, notice that
\begin{align*}
	f^{-1}[V_n] \cap A
	&= [(U_n \setminus N_n) \cup M_n] \cap A\\
	&= (U_n \cap N_n^c \cap A) \cup (M_n \cap A)\\
	&= U_n \cap A
\end{align*}
and $U_n\cap A$ is relatively open in $A$, finishing the proof.
\end{proof}

\begin{proposition} \label{prop:GleasonActions}
Let $X$ be Polish, $Y$ be a separable metric space, and $G$ be a group that acts as a group of homeomorphisms on $X$ and $Y$ which is transitive on $X$.\
If $f : X \mapsto Y$ is an  map which is $G$-equivariant, then $f$ is continuous.
\end{proposition}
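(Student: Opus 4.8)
The statement should read that $f$ is an \emph{EBP-measurable} $G$-equivariant map, and the plan is to combine the regularity machinery of Theorem~\ref{thm:ContinuityWithNiceSets} with a homogeneity (all-or-nothing) argument. The organizing observation is that the set $C=\{x\in X: f\text{ is continuous at }x\}$ is $G$-invariant: writing $\alpha_g$ and $\beta_g$ for the homeomorphisms $x\mapsto g\cdot x$ of $X$ and $y\mapsto g\cdot y$ of $Y$, equivariance gives $f=\beta_g\circ f\circ \alpha_g^{-1}$, so continuity of $f$ at $x$ is equivalent to continuity at $g\cdot x$. Since $G$ acts transitively on $X$, $C$ is either empty or all of $X$. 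Thus it suffices to produce a single point of continuity, and everything reduces to showing $C\neq\emptyset$.

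To produce continuity points I would verify the hypotheses of Theorem~\ref{thm:ContinuityWithNiceSets}. EBP-measurability gives directly that $f^{-1}[B]\in EBP(X)$ for every open ball $B$, so the only real work is the ``nice-set'' condition $f^{-1}[V]\subseteq CR(f^{-1}[V])$ for every open $V\subseteq Y$. Granting this, Theorem~\ref{thm:ContinuityWithNiceSets} yields that $C$ contains a dense $G_\delta$, hence $C\neq\emptyset$, and the paragraph above upgrades this to $C=X$.

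The crux is therefore the nice-set condition, and this is where homogeneity does the real work. First, since $Y$ is separable metric (hence second countable), Theorem~\ref{thm:R-measurableAndSecondCountable} supplies a co-residually null set $A\subseteq X$ with $f\restriction_A$ continuous; because $X\setminus A$ is residually null while $X$ itself is not (Lemma~\ref{lem:open_positive_dense}), $A\neq\emptyset$. Now I argue by contradiction using the characterization in Lemma~\ref{lem:NiceSets}: if the condition fails for some open $V$, there is an open $U$ with $U\cap f^{-1}[V]\neq\emptyset$ and $U\cap f^{-1}[V]$ residually null. Choosing $x_0\in U\cap f^{-1}[V]$ and, by transitivity, $g\in G$ with $g\cdot x_0\in A$, I would transport the bad neighborhood: equivariance gives $g\cdot f^{-1}[V]=f^{-1}[g\cdot V]$, so with $V'=g\cdot V$ the set $g\cdot U\cap f^{-1}[V']=g\cdot(U\cap f^{-1}[V])$ is residually null (homeomorphic images of residually null sets are residually null by Proposition~\ref{prop:TopologicalInvarianceOfRN}), while $g\cdot U$ is an open neighborhood of $g\cdot x_0$ and $f(g\cdot x_0)=g\cdot f(x_0)\in V'$.

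Finally, continuity of $f\restriction_A$ at the point $g\cdot x_0\in A$ produces an open $O\ni g\cdot x_0$ with $O\subseteq g\cdot U$ and $f[O\cap A]\subseteq V'$, that is $O\cap A\subseteq f^{-1}[V']$; hence $O\cap A\subseteq g\cdot U\cap f^{-1}[V']$ is residually null. Since $O\cap(X\setminus A)$ is also residually null, the nonempty open set $O$ would be residually null, contradicting Lemma~\ref{lem:open_positive_dense}. This establishes the nice-set condition and completes the argument. I expect the main obstacle to be exactly this last step: continuity on the co-residually null set $A$ is \emph{not} by itself enough to force a point of continuity (residually null sets can be topologically large), so transitivity must be used to relocate an arbitrary ``bad'' neighborhood into the good region $A$, and the homeomorphism-invariance of residual nullity is what makes the relocation legitimate.
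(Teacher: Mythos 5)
Your proof is correct, including your reading of the garbled hypothesis as \emph{EBP-measurable}, but your treatment of the crux --- the condition $f^{-1}[V]\subseteq CR(f^{-1}[V])$ --- takes a genuinely different route from the paper's. The paper argues directly and globally: picking $x\in U\cap f^{-1}[V]$, transitivity gives $\text{graph}(f)\subseteq\bigcup\{g\cdot U\times g\cdot V: g\in G\}$; since the graph is separable metric, hence Lindel\"{o}f, countably many translates suffice, so $X=\bigcup\{g_n\cdot(U\cap f^{-1}[V]): n\in\omega\}$; because the residually null sets form a $\sigma$-ideal invariant under homeomorphisms (Proposition \ref{prop:TopologicalInvarianceOfRN}) and $X$ itself is not residually null, $U\cap f^{-1}[V]$ cannot be residually null, and Lemma \ref{lem:NiceSets} finishes. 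Your argument instead routes through Theorem \ref{thm:R-measurableAndSecondCountable}: continuity of $f$ on a co-residually null set $A$, plus transitivity to transport a hypothetical residually null trace $U\cap f^{-1}[V]$ onto a point of $A$, produces a nonempty open set that is residually null, contradicting Lemma \ref{lem:open_positive_dense}. Both proofs hinge on transitivity and homeomorphism-invariance of residual nullity, but the paper's covering argument is more self-contained (it needs only the $\sigma$-ideal property, not the almost-everywhere continuity theorem) and is direct rather than by contradiction, while yours localizes all the work at a single good point and reuses machinery the paper proves but does not invoke here. Your closing step --- the set of continuity points is $G$-invariant, hence empty or all of $X$ by transitivity --- is a cleaner packaging of the paper's explicit sequence computation $g\cdot f(x_n)=f(g\cdot x_n)\to f(g\cdot x)=g\cdot f(x)$, but it is the same idea.
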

\begin{proof}
First, we will see that $f^{-1}[V] \subseteq CR(f^{-1}[V])$ for any open set $V \subseteq Y$.
Let $U \subseteq X$ and $V \subseteq Y$ be open so that $U \cap f^{-1}[V] \neq \emptyset$ and pick $x \in U \cap f^{-1}[V]$.
Then $\langle x , f(x) \rangle \in U \times V$.
For $y \in X$, let $g \in G$ be so that $g \cdot x = y$.
Then
\[
	\langle y, f(y) \rangle = \langle g \cdot x , f( g \cdot x) \rangle = \langle g \cdot x , g \cdot f(x) \rangle \in g\cdot U \times g \cdot V.
\]
It follows that $\text{graph}(f) \subseteq \bigcup \{ g \cdot U \times g \cdot V : g \in G \}$.
Since $\text{graph}(f)$ is separable and metrizable as it is a subset of $X \times Y$, it is Lindel\"{o}f which implies that there is $\{ g_n : n \in \omega\} \subseteq G$ so that
\[
	\text{graph}(f) \subseteq \bigcup \{ g_n \cdot U \times g_n \cdot V : n \in \omega\}.
\]
Let $X_n = \{ x \in X : \langle x , f(x) \rangle \in g_n \cdot U \times g_n \cdot V \} = g_n \cdot (U \cap f^{-1}[V])$.
Since $X = \bigcup\{ X_n : n \in \omega\}$, some $X_n$ is not residually null.
Hence, $U \cap f^{-1}[V]$ is not residually null.
Thus, by Lemma \ref{lem:NiceSets}, $f^{-1}[V] \subseteq CR(f^{-1}[V])$.

Now, by Theorem \ref{thm:ContinuityWithNiceSets}, we have that the points of continuity of $f$ is a dense $G_\delta$ subset of $X$.
To see that $f$ is continuous everywhere, let $x \in X$.
For some $y \in X$, $f$ is continuous at $y$.
As $G$ is transitive, let $g \in G$ be so that $g\cdot x = y$ and $\{ x_n : n \in \omega\} \subseteq X$ so that $x_n \to x$.
Then $g \cdot x_n \to g \cdot  x = y$.
As $f$ is $G$-equivariant, $f(g \cdot x_n) = g \cdot f(x_n)$ and notice that
\[
	g\cdot f(x_n) = f(g \cdot x_n) \to f(y) = f(g \cdot x) = g \cdot f(x)
\]
as $f$ is continuous at $y$.
It follows that $f(x_n) \to f(x)$, which is to say that $f$ is continuous at $x$, finishing the proof.
\end{proof}

\begin{corollary}
Let $G$ be a multiplicative group with a Polish topology so that, for
each $g \in G$, the map $h \mapsto gh$, $G \mapsto G$, is continuous and,
for each $h \in G$, the map $g \mapsto gh$, $G \mapsto G$, is EBP-measurable. Then $G$ is a Polish group.
\end{corollary}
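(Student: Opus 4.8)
The plan is to use Proposition \ref{prop:GleasonActions} to promote the right translations from EBP-measurable to continuous, so that $G$ becomes a group with a Polish topology in which multiplication is separately continuous; a classical automatic-continuity theorem then upgrades this to joint continuity of multiplication together with continuity of inversion, which is exactly what it means for $G$ to be a Polish group.

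For the first step I would let $G$ act on itself by left translation, taking $X = Y = G$ and $g \cdot x = gx$. Each left translation $\lambda_g : x \mapsto gx$ is continuous by hypothesis, and since its inverse is $\lambda_{g^{-1}}$ (also a hypothesized left translation), every $\lambda_g$ is a homeomorphism; thus $G$ acts as a group of homeomorphisms on $X$ and on $Y$, and the action is transitive because $g = yx^{-1}$ carries $x$ to $y$. Now fix $h \in G$ and consider the right translation $\rho_h : x \mapsto xh$. Associativity gives $\rho_h(g \cdot x) = (gx)h = g(xh) = g \cdot \rho_h(x)$, so $\rho_h$ is $G$-equivariant, and $\rho_h$ is EBP-measurable by hypothesis. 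Since $X$ and $Y$ equal $G$ and are separable metric, Proposition \ref{prop:GleasonActions} applies and tells us that $\rho_h$ is continuous. As $h$ was arbitrary, every right translation is continuous; combined with the continuity of the left translations, this says precisely that multiplication on $G$ is separately continuous.

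What remains is to pass from separate continuity to a genuine topological group structure, and this is the step I expect to be the main obstacle, since it is not formal: separate continuity alone forces neither joint continuity of multiplication nor continuity of inversion. Here I would appeal to the classical theorem, in the tradition of Montgomery and Ellis, that a semitopological group whose underlying space is Polish (indeed, \v{C}ech-complete) is automatically a topological group. Applying it to $G$ makes multiplication $G \times G \to G$ and inversion $G \to G$ continuous; since the topology is Polish by hypothesis, $G$ is a Polish group. Everything preceding this invocation is a direct, essentially bookkeeping, application of Proposition \ref{prop:GleasonActions}, so the real content of the corollary is concentrated in this final classical input.
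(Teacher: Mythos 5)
Your proof is correct and follows essentially the same route as the paper: you invoke Proposition \ref{prop:GleasonActions} with the left-translation action (which is transitive and by homeomorphisms) to upgrade each EBP-measurable right translation $\rho_h$ to a continuous map via the equivariance $\rho_h(gx) = g\rho_h(x)$, and then pass from separate continuity to a topological group structure via the classical theorem of Montgomery --- which is precisely the citation \cite{montgomery} the paper uses for that final step.
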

\begin{proof}
Fix $h \in G$ and let $\phi : G \mapsto G$ be defined by $\phi(g) = gh$.
By assumption, $\phi$ is EBP-measurable.
Let $G$ act on itself by $\langle g,x \rangle \mapsto gx$, $G^2 \mapsto G$, and notice that $G$ acts as a group of homeomorphisms on $G$ and that this action is transitive.
Moreover, $\phi$ is $G$-equivariant as, for any $g ,x \in G$, $\phi(gx) = gxh = g \phi(x)$.
Hence, $\phi$ is continuous by Proposition \ref{prop:GleasonActions}.
It follows that multiplication is separately continuous so, applying \cite{montgomery}, we are done.
\end{proof}

\section{The Alexandrov-Suslin Operation}

We will now discover that EBP-sets are closed under the Alexandrov-Suslin operation, hereinafter referred to as the $\mathcal A$-operation, which was introduced independently by P. Alexandrov in \cite{PAlex} and M. Suslin in \cite{Suslin}.
One of the properties of the $\mathcal A$-operation is that all $\mathbf \Sigma^1_1$ subsets of a Polish space can be obtained by the $\mathcal A$-operation on a countable family of closed sets.
It's also true that countable families of Baire property sets are preserved under the $\mathcal A$-operation.
Let's recall the definition.

\begin{definition}
Let $X$ be a set.
By an $\mathcal A$-system, we mean a function $S : \omega^{<\omega} \mapsto \wp(X)$.
For an $\mathcal A$-system $S$, we define the $\mathcal A$-operation on $S$ to be
\[
	\mathcal A(S) = \bigcup_{w \in \omega^\omega} \bigcap_{n \in \omega} S(w \restriction_n).
\]
We say that an $\mathcal A$-system $S$ is regular if $S(w\restriction_{n+1}) \subseteq S(w \restriction_n)$ for all $w\in \omega^{<\omega}$ and $n \in \omega$.
\end{definition}
\begin{lemma} \label{lem:RegularSystems}
With respect to the $\mathcal A$-operation, all $\mathcal A$-systems can be assumed to be regular without loss of generality.
That is, for any $\mathcal A$-system $S$, there is a regular $\mathcal A$-system $S'$ so that $\mathcal A(S) = \mathcal A(S')$.
\end{lemma}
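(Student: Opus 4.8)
The plan is to regularize $S$ by intersecting along initial segments, a move that forces monotonicity down each branch while leaving every branchwise intersection untouched. Concretely, for each $s \in \omega^{<\omega}$ with $|s| = n$, I would define
\[
	S'(s) = \bigcap_{k \leq n} S(s\restriction_k),
\]
with the convention that $s\restriction_0$ is the empty sequence, so that $S(\emptyset) = S(s\restriction_0)$ always appears in the intersection. Note in passing that $S'(s) \subseteq S(s)$ automatically, a bonus property that is often wanted downstream.

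First I would check that $S'$ is regular. Take $w \in \omega^{<\omega}$ with $|w| = n+1$; then $w\restriction_{n+1} = w$ and
\[
	S'(w) = \bigcap_{k \leq n+1} S(w\restriction_k) = \left[ \bigcap_{k \leq n} S(w\restriction_k) \right] \cap S(w) = S'(w\restriction_n) \cap S(w) \subseteq S'(w\restriction_n),
\]
which is exactly the required containment $S'(w\restriction_{n+1}) \subseteq S'(w\restriction_n)$. Since $w$ was an arbitrary finite sequence, $S'$ is regular.

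Finally I would verify $\mathcal A(S') = \mathcal A(S)$ by showing the two systems produce identical branchwise intersections. For any $w \in \omega^\omega$,
\[
	\bigcap_{n \in \omega} S'(w\restriction_n) = \bigcap_{n \in \omega} \bigcap_{k \leq n} S(w\restriction_k) = \bigcap_{n \in \omega} S(w\restriction_n),
\]
the last equality holding because a point lies in every partial intersection $\bigcap_{k \leq n} S(w\restriction_k)$ if and only if it lies in $S(w\restriction_k)$ for every $k \in \omega$. Taking the union over all $w \in \omega^\omega$ then gives $\mathcal A(S') = \mathcal A(S)$, as desired.

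This lemma is essentially bookkeeping rather than a substantive argument, so there is no real obstacle; the one point to get right is the collapse of the nested double intersection in the final display (together with the harmless convention on the empty sequence), which is precisely what guarantees that regularization changes neither the branchwise intersections nor their union.
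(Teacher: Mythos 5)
Your proposal is correct and uses exactly the same construction as the paper: regularize by setting $S'(\sigma) = \bigcap\{S(\sigma\restriction_m) : m \leq |\sigma|\}$, which is monotone along branches and leaves every branchwise intersection (hence $\mathcal A(S)$) unchanged. The paper states this construction and asserts it works; your write-up simply fills in the routine verification.
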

\begin{proof}
For each $\sigma \in \omega^{<\omega}$ of length $n$, define
\[
	S'( \sigma ) = \bigcap \{ S( \sigma\restriction_m ) : m \leq n \}
\]
and notice that $S'$ is the desired system.
\end{proof}

\begin{lemma} \label{lem:CoveringLemma}
Let $X$ be a Polish space and $A \subseteq X$.
There exists an EBP-set $B$ so that, for any EBP-set $E$, if $A \subseteq E$, $B \setminus E$ is residually null.
If desired, this $B$ can be chosen to be the union of a closed set with a residually null set.
\end{lemma}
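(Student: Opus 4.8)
The plan is to take the candidate $B = CR(A)$ and read off the required minimality directly from the machinery already developed in Lemma \ref{lem:MegaLemma}. Since $R(A)$ is open by construction, $B = CR(A) = X \setminus R(A)$ is closed, hence an EBP-set and already of the promised form (a closed set together with the empty, hence residually null, set). If in addition one wants $A \subseteq B$ literally rather than merely modulo residually null, I would instead set $B = CR(A) \cup (A \cap R(A))$; here $A \cap R(A) = A \setminus CR(A)$ is residually null by Lemma \ref{lem:MegaLemma} \ref{MegaLemma_AMinusCR} (equivalently by Theorem \ref{thm:RNPointsOfARN}), so $B$ is genuinely a union of a closed set with a residually null set, and $A = (A \cap CR(A)) \cup (A \cap R(A)) \subseteq B$.

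The core of the argument is the minimality, which I expect to follow in two short steps. Let $E$ be any EBP-set with $A \subseteq E$. First, monotonicity of $CR$ from Lemma \ref{lem:MegaLemma} \ref{MegaLemma_Monotonicity} gives $CR(A) \subseteq CR(E)$. Second, since $E$ is an EBP-set, $CR(E) \setminus E$ is residually null by Lemma \ref{lem:MegaLemma} \ref{MegaLemma_CRMinusA}. Combining these, $CR(A) \setminus E \subseteq CR(E) \setminus E$ is residually null. In the first formulation $B \setminus E = CR(A) \setminus E$ and we are finished; in the second, $B \setminus E = (CR(A) \setminus E) \cup ((A \cap R(A)) \setminus E)$, where the second term is empty because $A \cap R(A) \subseteq A \subseteq E$, so again $B \setminus E$ is residually null.

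I do not anticipate a genuine obstacle once $CR(A)$ is identified as the right object: each substantive fact—that $CR$ is monotone, that $CR(E) \setminus E$ is residually null for EBP-sets $E$, and that $A \cap R(A)$ is residually null—has already been established in Lemma \ref{lem:MegaLemma} and Theorem \ref{thm:RNPointsOfARN}. The only point requiring a moment's care is the bookkeeping for the optional refinement $A \subseteq B$, where one must verify that adjoining the residually null set $A \cap R(A)$ does not disturb minimality; this is immediate, since that set is contained in every $E \supseteq A$. Thus the lemma reduces to recognizing that $CR(A)$ is simultaneously the smallest EBP-set containing $A$ modulo residually null sets and representable in the desired closed-plus-residually-null form.
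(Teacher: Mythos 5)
Your proof is correct, and your set $B = CR(A) \cup (A \cap R(A))$ is literally the same set the paper constructs: the paper takes $F = CR(A)$, $R = A \setminus CR(A)$, $B = F \cup R$, and $A \setminus CR(A) = A \cap R(A)$ since $CR(A) = X \setminus R(A)$. Where you genuinely differ is in verifying minimality. The paper does not invoke Lemma \ref{lem:MegaLemma} \ref{MegaLemma_CRMinusA} at this point; instead it decomposes the competing EBP-set $E$ as $(U \setminus N) \cup M$ via Lemma \ref{lem:R-set_Representations}, grinds through set algebra to reduce $B \setminus E$ to $[F \setminus (U \cup M)] \cup N \cup R$, and then uses $A \subseteq U \cup M \Rightarrow CR(A) \subseteq CR(U \cup M) = CR(U) = \text{cl}_X(U)$ to conclude $F \setminus (U \cup M) \subseteq \text{cl}_X(U) \setminus U$, a closed nowhere dense set. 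Your route --- monotonicity (Lemma \ref{lem:MegaLemma} \ref{MegaLemma_Monotonicity}) gives $CR(A) \subseteq CR(E)$, and \ref{MegaLemma_CRMinusA} gives that $CR(E) \setminus E$ is residually null, whence $CR(A) \setminus E \subseteq CR(E) \setminus E$ is residually null --- reaches the same conclusion in two lines, essentially because the computation the paper performs here is a rerun of the one it already carried out in proving \ref{MegaLemma_CRMinusA}; you reuse that work rather than redo it. Your bookkeeping for the optional requirement $A \subseteq B$ (adjoining $A \cap R(A)$, which is residually null and lies inside every $E \supseteq A$, so it contributes nothing to $B \setminus E$) is also correct and worth retaining: although the lemma as stated does not demand $A \subseteq B$, the paper's subsequent application to closure under the $\mathcal A$-operation via \cite[Theorem 29.13]{kechris} requires an envelope that actually contains $A$, so your second variant is the one that matters downstream.
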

\begin{proof}
Let $F = CR(A)$, $R = (A \setminus CR(A))$, $B = F \cup R$, and notice that $A \subseteq B$.
Since $A \setminus CR(A)$ is residually null by Lemma \ref{lem:MegaLemma} \ref{MegaLemma_AMinusCR}, Lemma \ref{lem:R-set_Representations} guarantees that $B$ is an EBP-set.

Now, suppose $E$ is any EBP-set so that $A \subseteq E$.
By Lemma \ref{lem:R-set_Representations}, we can write $E$ as $(U \setminus N) \cup M$ where $U$ is open and both $N$ and $M$ are residually null.
Now,
\begin{align*}
	(F \cup R) \setminus ((U \setminus N)\cup M)
	&= (F\cup R) \cap ((U\cap N^c)^c \cap M^c)\\
	&= (F\cup R) \cap ((U^c \cup N) \cap M^c)\\
	&\subseteq [F \cap ((U^c \cup N) \cap M^c)] \cup R\\
	&\subseteq (F \cap U^c \cap M^c) \cup N \cup R\\
	&= [F \setminus (U \cup M)] \cup N \cup R.
\end{align*}
So we just need to see that $F \setminus (U \cup M)$ is residually null.
Observe that, appealing to Lemma \ref{lem:MegaLemma},
\[
	A \subseteq U \cup M \Longrightarrow CR(A) \subseteq CR(U\cup M) = CR(U) \Longrightarrow F \subseteq \text{cl}_X(U).
\]
Thus,
\[
	F \setminus (U \cup M) \subseteq \text{cl}_X(U) \setminus (U \cup M) \subseteq \text{cl}_X(U) \setminus U,
\]
finishing the proof.
\end{proof}

\begin{theorem}
The class of EBP-sets is closed under the $\mathcal A$-operation.
That is, suppose $S$ is an $\mathcal A$-system so that, for every $\sigma \in \omega^{<\omega}$, $S(\sigma)$ is an EBP-set.
Then $\mathcal A(S)$ is an EBP-set.
\end{theorem}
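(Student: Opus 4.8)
The plan is to mirror the classical Marczewski--Szpilrajn argument for the Baire property, using the covering construction of Lemma \ref{lem:CoveringLemma} in place of the usual ``hull'' operation. By Lemma \ref{lem:RegularSystems} I may assume $S$ is regular. For each $\sigma \in \omega^{<\omega}$ I would set
\[
  A_\sigma = \bigcup\Big\{ \bigcap_{n\in\omega} S(w\restriction_n) : w \in \omega^\omega,\ \sigma \subseteq w \Big\},
\]
so that $A_\emptyset = \mathcal A(S)$. Three elementary facts drive everything: first, $A_\sigma = \bigcup_{k\in\omega} A_{\sigma^\frown k}$, obtained by splitting a branch $w \supseteq \sigma$ according to its value at coordinate $|\sigma|$ (this needs no regularity); second, regularity gives $A_\sigma \subseteq S(\sigma)$, since every branch through $\sigma$ already passes through $S(\sigma)$; and third, regularity gives $S(\sigma^\frown k) \subseteq S(\sigma)$.

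Next I would invoke Lemma \ref{lem:CoveringLemma} to attach to each $A_\sigma$ an EBP-set $B_\sigma \supseteq A_\sigma$ with the minimality property that $B_\sigma \setminus E$ is residually null for every EBP-set $E \supseteq A_\sigma$. I then refine this to $\widehat B_\sigma := B_\sigma \cap S(\sigma)$, which is again an EBP-set (finite intersection, via Theorem \ref{thm:RSetSigmaAlgebra}), still contains $A_\sigma$ (as $A_\sigma \subseteq S(\sigma)$), is contained in $S(\sigma)$, and retains the minimality property because $\widehat B_\sigma \setminus E \subseteq B_\sigma \setminus E$. The key consequence is that, for every $\sigma$, the set
\[
  R_\sigma := \widehat B_\sigma \setminus \bigcup_{k\in\omega}\widehat B_{\sigma^\frown k}
\]
is residually null: the set $\bigcup_k \widehat B_{\sigma^\frown k}$ is an EBP-set (a countable union, using Theorem \ref{thm:RSetSigmaAlgebra}) containing $\bigcup_k A_{\sigma^\frown k} = A_\sigma$, so the minimality of $\widehat B_\sigma$ applies. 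By Lemma \ref{lem:countable_additivity} the countable union $R := \bigcup_{\sigma \in \omega^{<\omega}} R_\sigma$ is then residually null.

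Finally I would show $\widehat B_\emptyset \setminus R \subseteq \mathcal A(S)$ by a diagonal extraction, which is the heart of the argument. Given $x \in \widehat B_\emptyset \setminus R$, the fact that $x \notin R_\emptyset$ forces $x \in \widehat B_{(k_0)}$ for some $k_0$; since $x \notin R_{(k_0)}$ this forces $x \in \widehat B_{(k_0,k_1)}$ for some $k_1$, and recursively one builds a branch $w = (k_0,k_1,\dots) \in \omega^\omega$ with $x \in \widehat B_{w\restriction_n}$ for every $n$. As $\widehat B_{w\restriction_n} \subseteq S(w\restriction_n)$, this yields $x \in \bigcap_n S(w\restriction_n) \subseteq \mathcal A(S)$. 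Hence $\widehat B_\emptyset \setminus \mathcal A(S) \subseteq R$ is residually null, and since $\mathcal A(S) = A_\emptyset \subseteq \widehat B_\emptyset$ we conclude that $\mathcal A(S) \triangle \widehat B_\emptyset$ is residually null. As $\widehat B_\emptyset$ is an EBP-set, Lemma \ref{lem:R-set_Representations} \ref{RRepSecond} gives that $\mathcal A(S)$ is an EBP-set. I expect the main difficulty to be packaging the recursive branch construction cleanly and verifying that the refinement $\widehat B_\sigma$ preserves the minimality property from Lemma \ref{lem:CoveringLemma}; the measure-theoretic content is entirely carried by the residual-nullity of the $R_\sigma$, which reduces to that lemma.
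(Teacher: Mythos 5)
Your proof is correct and coincides with the paper's approach: the paper's entire proof is ``Apply Lemma \ref{lem:CoveringLemma} and \cite[Theorem 29.13]{kechris},'' and what you have written out is exactly that---the covering lemma followed by an inline reproduction of the Marczewski--Szpilrajn argument which proves the cited theorem (regularize, form the sets $A_\sigma$, take covers refined to lie inside $S(\sigma)$, observe the differences $R_\sigma$ are in the $\sigma$-ideal, and diagonalize along branches). The only caveat is that you use $A_\sigma \subseteq B_\sigma$, which the statement of Lemma \ref{lem:CoveringLemma} omits but its proof supplies (the constructed cover $CR(A) \cup (A \setminus CR(A))$ contains $A$), so your reading is the intended one.
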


\begin{proof}
Apply Lemma \ref{lem:CoveringLemma} and \cite[Theorem 29.13]{kechris}.
\end{proof}

\section{Applications to Polish Groups}

\begin{proposition} \label{prop:coResiduallyNullGroupHom_Continuous}
Let $G$ be a multiplicative Polish group, $H$ be a multiplicative topological group, and $\phi: G \mapsto H$ be a group homomorphism.
If there exists a set $A \subseteq X$ which is co-residually null so that $\phi \restriction_A$ is continuous, then $\phi$ is continuous.
\end{proposition}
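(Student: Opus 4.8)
The plan is to establish continuity of $\phi$ at the identity $e_G$ and then bootstrap to continuity everywhere using the homomorphism property. Since $G$ is a topological group, for any $g_0 \in G$ and any sequence $x_n \to g_0$ we have $g_0^{-1} x_n \to e_G$; if we know $\phi$ is continuous at $e_G$, then $\phi(g_0^{-1}x_n) = \phi(g_0)^{-1}\phi(x_n) \to e_H$, and continuity of left multiplication by $\phi(g_0)$ in $H$ yields $\phi(x_n) \to \phi(g_0)$. As $G$ is metrizable, hence first countable, sequential continuity suffices, so it is enough to show: whenever $g_n \to e_G$ in $G$, one has $\phi(g_n) \to e_H$ in $H$. (Here $A \subseteq G$; the ``$X$'' in the statement should read ``$G$''.)

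First I would fix a sequence $g_n \to e_G$ and locate a single point $a \in A$ with the property that $a g_n \in A$ for every $n \in \omega$. To do this, note that right translation $x \mapsto x g_n^{-1}$ is a homeomorphism of $G$, so by Proposition \ref{prop:TopologicalInvarianceOfRN} the set $(G \setminus A) g_n^{-1} = G \setminus (A g_n^{-1})$ is residually null for each $n$. Since the residually null sets form a $\sigma$-ideal by Lemma \ref{lem:countable_additivity}, the set
\[
	G \setminus \Big( A \cap \bigcap_{n \in \omega} A g_n^{-1} \Big) = (G \setminus A) \cup \bigcup_{n\in\omega}\big(G \setminus A g_n^{-1}\big)
\]
is residually null, so $A \cap \bigcap_n A g_n^{-1}$ is co-residually null. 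Because no nonempty open set is residually null (Lemma \ref{lem:open_positive_dense}) and $G$ is itself open and nonempty, this co-residually null set is nonempty; I would choose $a$ in it, so that $a \in A$ and $a g_n \in A$ for all $n$.

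Then I would conclude by transporting the continuity of $\phi \restriction_A$ at $a$ through the homomorphism. Joint continuity of multiplication in $G$ gives $a g_n \to a$, and since $a, a g_n \in A$, continuity of $\phi \restriction_A$ yields $\phi(a g_n) \to \phi(a)$. Writing $\phi(a g_n) = \phi(a)\phi(g_n)$ and applying the continuous left multiplication by $\phi(a)^{-1}$ in $H$, I obtain $\phi(g_n) \to e_H$, which is exactly the desired continuity at $e_G$.

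The only real subtlety lies in the second step: one needs a single point $a$ that simultaneously keeps all translates $a g_n$ inside $A$, and this is precisely where both the translation invariance of the residually null ideal (Proposition \ref{prop:TopologicalInvarianceOfRN}) and its closure under countable unions (Lemma \ref{lem:countable_additivity}) are indispensable; mere meagerness would suffice for the argument, but residually null is the correct level of generality for the statement. I would also emphasize that $H$ is not assumed separable or metrizable, so working with the fixed sequence $g_n$ and verifying convergence of the single image sequence $\phi(g_n)$ — rather than appealing to any countable structure on $H$ — is what keeps the argument valid at this level of generality.
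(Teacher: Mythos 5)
Your proof is correct and follows essentially the same route as the paper's: both arguments use translation invariance of residually null sets (Proposition \ref{prop:TopologicalInvarianceOfRN}) together with the $\sigma$-ideal property (Lemma \ref{lem:countable_additivity}) to find a single point of $A$ whose translates by the terms of a convergent sequence all remain in $A$, and then transport the continuity of $\phi\restriction_A$ through the homomorphism. The only difference is cosmetic: you first reduce to continuity at the identity and use right translates $Ag_n^{-1}$, whereas the paper handles an arbitrary sequence $g_n \to g$ directly by picking $h \in g^{-1}A \cap \bigcap_n g_n^{-1}A$.
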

\begin{proof}
Let $\{ g_n : n \in \omega \} \subseteq G$ be so that $g_n \to g \in G$.
Since $A$ is co-residually null, $g^{-1}A \cap \bigcap\{ g_n^{-1} A : n \in \omega\}$ is co-residually null and, in particular, non-empty.
So, pick $h \in g^{-1}A \cap \bigcap\{ g_n^{-1} A : n \in \omega\}$.
It follows that $gh \in A$ and, for each $n \in \omega$, $g_n h \in A$.
Since $\phi\restriction_A$ is continuous and $g_n h \to gh$,
\[
	\phi(g_n)\phi(h) = \phi(g_n h) \to \phi(gh) = \phi(g) \phi(h).
\]
It follows that $\phi(g_n) \to \phi(g)$, which is to say that $\phi$ is continuous.
\end{proof}

\begin{corollary}
Let $G$ be a multiplicative Polish group, $H$ be a multiplicative topological group with a countable basis, and $\phi : G \mapsto H$ be a group homomorphism which is EBP-measurable.
Then $\phi$ is continuous.
\end{corollary}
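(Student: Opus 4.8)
The plan is to obtain this as an immediate consequence of the two preceding results, since the substantive analytic work has already been carried out. First I would check that the hypotheses of Theorem \ref{thm:R-measurableAndSecondCountable} are met: here $G$ plays the role of the Polish space $X$, the target group $H$ plays the role of the topological space $Y$ with a countable basis, and $\phi$ is EBP-measurable by assumption. Applying that theorem therefore yields a set $A \subseteq G$ which is co-residually null and on which the restriction $\phi \restriction_A$ is continuous.

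Next I would feed this set $A$ into Proposition \ref{prop:coResiduallyNullGroupHom_Continuous}. The remaining hypotheses match exactly: $G$ is a multiplicative Polish group, $H$ is a multiplicative topological group, and $\phi$ is a group homomorphism. Since we have just produced a co-residually null set $A \subseteq G$ with $\phi \restriction_A$ continuous, that proposition delivers the continuity of $\phi$ on all of $G$, which is precisely the conclusion sought.

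I do not expect any genuine obstacle at this stage, as the corollary is a packaging of Theorem \ref{thm:R-measurableAndSecondCountable} (which extracts a large continuity set from EBP-measurability) with Proposition \ref{prop:coResiduallyNullGroupHom_Continuous} (which upgrades continuity on a co-residually null set to global continuity using the homogeneity supplied by the group structure). The only point meriting a word of care is bookkeeping: confirming that the co-residually null set furnished by the first result is interpreted as a subset of $G$ rather than of $H$, so that the translation argument inside Proposition \ref{prop:coResiduallyNullGroupHom_Continuous}, which intersects left-translates $g^{-1}A$ and uses that co-residually null sets form a filter base of nonempty sets, applies verbatim.
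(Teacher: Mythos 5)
Your proposal is correct and is exactly the paper's own proof: the authors likewise dispose of this corollary by combining Theorem \ref{thm:R-measurableAndSecondCountable} with Proposition \ref{prop:coResiduallyNullGroupHom_Continuous}. Your additional bookkeeping remark about $A$ living in $G$ is a sensible sanity check but introduces nothing beyond what the paper does.
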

\begin{proof}
Use Theorem \ref{thm:R-measurableAndSecondCountable} and Proposition \ref{prop:coResiduallyNullGroupHom_Continuous}.
\end{proof}

Now we provide a generalization to the famous theorem of B. J. Pettis in \cite{Pettis}.

\begin{theorem} \label{thm:R-setPettis}
Suppose $G$ is a multiplicative Polish group and suppose $A$ is an EBP-set of $G$ which is not residually null.
Then both $A^{-1}A$ and $AA^{-1}$ contain a neighborhood of the identity.
\end{theorem}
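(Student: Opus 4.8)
The plan is to mirror the classical proof of Pettis's theorem, replacing the Baire-category machinery with the EBP-set apparatus developed in the preceding sections. First I would reduce to the set $ICR(A)$, which by Lemma \ref{lem:MegaLemma} \ref{MegaLemma_ICRMinusA} differs from $A$ by a residually null set; the key observation is that since $A$ is \emph{not} residually null, $CR(A) \neq \emptyset$ by Lemma \ref{lem:MegaLemma} \ref{MegaLemma_RNiffCREmpty}, and hence by the regularity identity \ref{MegaLemma_RegularityOfCR} we have $CR(A) = \text{cl}_X(ICR(A))$, so the open set $V := ICR(A)$ is non-empty. This $V$ will play the role that the interior of a second-category-with-Baire-property set plays in the ordinary Pettis argument.

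Next I would fix a point of $V$ and exploit translation. For $a \in V$, the set $a^{-1}V$ is an open neighborhood of the identity; I claim it is contained in $A^{-1}A$ (after adjusting for the residually null discrepancy). The mechanism is the standard one: for $g$ in a suitable neighborhood of $e$, the two translates $V$ and $gV$ should overlap in a set that is \emph{not} residually null, forcing $A$ and $gA$ to genuinely intersect. Concretely, given $g$ near $e$, I want $gV \cap V$ to be a non-empty open set; since non-empty open sets are never residually null by Lemma \ref{lem:open_positive_dense}, the corrected translates $gA$ and $A$ must meet, yielding an element $x$ with $x \in A$ and $g^{-1}x \in A$, whence $g = x(g^{-1}x)^{-1} \in AA^{-1}$, and symmetrically for $A^{-1}A$. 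To make ``$g$ near $e$'' precise I would use continuity of multiplication in the Polish group $G$ to produce an open neighborhood $W$ of $e$ with $gV \cap V \neq \emptyset$ for all $g \in W$.

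The main obstacle—and the place where residual nullity rather than meagerness must be handled carefully—is the passage from the open sets $V$, $gV$ to the actual sets $A$, $gA$. In the classical proof one uses that a Baire-property set agrees with an open set modulo a meager set, and that meager sets are translation-invariant as an ideal under the group operation. Here the analogous facts are that $A \triangle ICR(A)$ is residually null (Lemma \ref{lem:MegaLemma} \ref{MegaLemma_ICRTriangleA}) and that left/right translations by a fixed group element are homeomorphisms of $G$, so by Proposition \ref{prop:TopologicalInvarianceOfRN} they preserve the residually null ideal. Thus $gA \triangle gV$ is residually null for each fixed $g$. The crux is then: $gV \cap V$ is a non-empty open set, hence not residually null; but $(gV \cap V) \setminus (gA \cap A) \subseteq (gV \triangle gA) \cup (V \triangle A)$ is residually null, so $gA \cap A$ cannot be empty. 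I would verify this containment by the symmetric-difference inequalities in the lemma preceding Theorem \ref{thm:RSetSigmaAlgebra}, and the rest is bookkeeping. The symmetry between $A^{-1}A$ and $AA^{-1}$ follows by running the argument once with left translations and once with right translations, or equivalently by applying the first case to the inverted set.
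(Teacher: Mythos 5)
Your proposal is correct and follows essentially the same route as the paper's proof: both replace $A$ by a non-empty open set $V$ with $A \triangle V$ residually null, use invariance of residual nullity under translation homeomorphisms (Proposition \ref{prop:TopologicalInvarianceOfRN}) together with Lemma \ref{lem:open_positive_dense} to force $A$ and a translate of $A$ to meet whenever the corresponding translates of $V$ overlap, and then unwind to get a neighborhood of the identity inside $A^{-1}A$ and $AA^{-1}$. The only cosmetic difference is that you manufacture $V$ as $ICR(A)$ via Lemma \ref{lem:MegaLemma}, whereas the paper simply takes the open set supplied by the definition of an EBP-set, noting it is non-empty because $A$ is not residually null.
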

\begin{proof}
Let $U$ be open so that $A \triangle U$ is residually null and notice that $U \neq \emptyset$ since $A$ is assumed to not be residually null.
Pick $g \in U$ and find a neighborhood of the identity $V$ so that $gVV^{-1} \subseteq U$.
Let $h, p \in V$ and notice that $gph^{-1} \in U$ which implies that $gp \in Uh$.
Also, since $V$ is a neighborhood of identity, $gp \in gVV^{-1} \subseteq U$.
As $p \in V$ was arbitrary, we see that $gV \subseteq U \cap Uh$.

For $h \in V$,
\[
	(U \cap Uh) \triangle (A \cap Ah) \subseteq (U \triangle A) \cup (Uh \triangle Ah) = (U\triangle A) \cup ((U\triangle A)h).
\]
Since multiplication on the right is a homeomorphism and the notion of being residually null is a topological invariant by Proposition \ref{prop:TopologicalInvarianceOfRN}, we see that
\[
	(U \cap Uh) \triangle (A \cap Ah)
\]
is residually null.
Since $U \cap Uh$ is a non-empty open set, $U \cap Uh$ is not residually null by Lemma \ref{lem:open_positive_dense} so $A \cap Ah \neq \emptyset$.

So, let $h \in V$ and, since $A\cap Ah^{-1} \neq \emptyset$, let $g \in A \cap Ah^{-1}$.
It follows that $g^{-1} \in A^{-1}$ and $gh \in A$.
Thus, $h = g^{-1}gh \in A^{-1}A$.
That is, $V \subseteq A^{-1} A$ so we see that $A^{-1}A$ contains a neighborhood of identity.
A similar argument establishes the same result for $AA^{-1}$.
\end{proof}

\begin{corollary}
Let $G$ be a multiplicative Polish group and suppose $H$ is a subgroup which is an EBP-set but not residually null.
Then $H$ is open.
\end{corollary}
\begin{proof}
Since $H$ is an EBP-set which is not residually null, by Theorem \ref{thm:R-setPettis}, we see that $HH^{-1}$ contains a neighborhood $U$ of the identity.
So $U \subseteq HH^{-1} \subseteq H$ and we see that $H = \bigcup \{ hU : h \in H \}$ establishing that $H$ is open.
\end{proof}

\begin{theorem} \label{thm:R-setContinuity}
Let $G$ be a multiplicative Polish group, $H$ be a multiplicative separable group, and $\phi : G \mapsto H$ be a homomorphism which is EBP-measurable.
Then $\phi$ is continuous.
\end{theorem}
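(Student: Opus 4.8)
The plan is to prove continuity of $\phi$ at the identity $e_G$ directly via the Pettis-type Theorem \ref{thm:R-setPettis}, rather than routing through Theorem \ref{thm:R-measurableAndSecondCountable} as in the preceding corollary. The reason is that Theorem \ref{thm:R-measurableAndSecondCountable} requires a countable basis on the target, whereas here we are only granted separability of $H$, which for a (possibly non-metrizable) topological group is strictly weaker. So I would fix an arbitrary open neighborhood $W$ of $e_H$ and, using that $H$ is a topological group, first select a symmetric open neighborhood $V$ of $e_H$ with $V^{-1}V = VV \subseteq W$ (choose $V_0$ with $V_0 V_0 \subseteq W$ and set $V = V_0 \cap V_0^{-1}$).

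Next I would exploit separability to reduce to a single good preimage. If $D \subseteq H$ is countable and dense, then $\{ dV : d \in D\}$ covers $H$, since for any $x \in H$ the nonempty open set $xV^{-1}$ meets $D$, yielding $d \in D$ with $x \in dV$. Pulling back along $\phi$ gives $G = \bigcup\{ \phi^{-1}[dV] : d \in D\}$, and each $\phi^{-1}[dV]$ is an EBP-set because $\phi$ is EBP-measurable and $dV$ is open. Since the residually null sets form a $\sigma$-ideal (Lemma \ref{lem:countable_additivity}) while the nonempty open set $G$ is not residually null (Lemma \ref{lem:open_positive_dense}), this countable union cannot consist entirely of residually null sets. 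Hence there is some $d_0 \in D$ for which $A := \phi^{-1}[d_0 V]$ is an EBP-set that is \emph{not} residually null.

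Now Theorem \ref{thm:R-setPettis} applies to $A$, furnishing a neighborhood $N$ of $e_G$ with $N \subseteq A^{-1}A$. The key computation is that $\phi$ sends $A^{-1}A$ into $W$: for $a, b \in A$ we have $\phi(a), \phi(b) \in d_0 V$, so $\phi(a)^{-1}\phi(b) \in V^{-1} d_0^{-1} d_0 V = V^{-1} V \subseteq W$. Therefore $\phi[N] \subseteq \phi[A^{-1}A] \subseteq W$. As $W$ was an arbitrary neighborhood of $e_H$, this establishes continuity of $\phi$ at $e_G$; and a group homomorphism between topological groups that is continuous at the identity is continuous everywhere (if $g_n \to g$ then $g^{-1}g_n \to e_G$, so $\phi(g)^{-1}\phi(g_n) = \phi(g^{-1}g_n) \to e_H$ and hence $\phi(g_n) \to \phi(g)$).

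I expect the main subtlety to lie exactly at the place where separability replaces second countability: the passage from the countable cover $\{dV : d \in D\}$ to a single non-residually-null preimage $A$, which is what makes the argument go through for merely separable $H$. The remainder is careful bookkeeping of the Pettis conclusion together with the elementary fact that $\phi(a)^{-1}\phi(b)$ lands in $V^{-1}V$.
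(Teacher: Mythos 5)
Your proposal is correct and follows essentially the same route as the paper's own proof: both reduce to continuity at the identity, cover $H$ by countably many translates $dV$ of a neighborhood $V$ with $V^{-1}V$ inside the target neighborhood, use the $\sigma$-ideal property of residually null sets together with Lemma \ref{lem:open_positive_dense} to extract a non-residually-null EBP preimage, and then apply Theorem \ref{thm:R-setPettis} plus the computation $\phi[A^{-1}A] \subseteq V^{-1}V$. The only cosmetic differences are your use of a symmetric $V$ and your explicit final reduction from continuity at the identity to global continuity, which the paper leaves implicit.
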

\begin{proof}
It suffices to show that $\phi$ is continuous at $1_G$ so let $U$ be a neighborhood of $1_H$.
Then pick a neighborhood $V$ of $1_H$ so that $V^{-1}V \subseteq U$.
Let $\{ h_n : n \in \omega\} \subseteq H$ be dense and notice that, for any $h \in H$, $hV^{-1}$ is a neighborhood of $h$.
Pick $h_n$ so that $h_n \in hV^{-1}$ and $p \in V^{-1}$ so that $h_n = hp$.
Then $h = h_n p^{-1} \in h_n V$.
So we see that $H = \bigcup \{ h_nV : n \in \omega\}$.

Now, $G = \bigcup\{ \phi^{-1}[h_n V] : n \in \omega\}$.
It follows that there must be some $n \in \omega$ so that $\phi^{-1}[h_nV]$ is not residually null.
By assumption, $\phi^{-1}[h_nV]$ is an EBP-set so Theorem \ref{thm:R-setPettis} provides us with the fact that
\[
	(\phi^{-1}[h_nV])^{-1}\phi^{-1}[h_nV]
\]
contains a neighborhood of $1_G$.
Observe that
\begin{align*}
	(\phi^{-1}[h_nV])^{-1}\phi^{-1}[h_nV]
	&= \phi^{-1}[(h_nV)^{-1}]\phi^{-1}[h_nV]\\
	&\subseteq \phi^{-1}[(h_nV)^{-1} h_nV]\\
	&= \phi^{-1}[V^{-1}h_n^{-1}h_n V]\\
	&= \phi^{-1}[V^{-1}V]\\
	&\subseteq \phi^{-1}[U],
\end{align*}
finishing the proof.
\end{proof}

\begin{corollary}
Let $G$ and $H$ be multiplicative Polish groups and $\phi : G \mapsto H$ be a homomorphism which is EBP-measurable.
Then $\phi$ is continuous.
If moreover, $\phi[G]$ is not residually null, $\phi$ is open.
\end{corollary}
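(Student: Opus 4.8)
The plan is to treat continuity and openness separately, leaning on the two group-theoretic results already in hand. For continuity I would simply invoke Theorem \ref{thm:R-setContinuity}: a Polish group is by definition separable, so $H$ is a multiplicative separable group and $\phi$ is an EBP-measurable homomorphism, whence that theorem applies verbatim to give that $\phi$ is continuous.

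For openness it suffices, by homogeneity, to show that the image of every neighborhood of $1_G$ contains a neighborhood of $1_H$; indeed, for any open $O \subseteq G$ and $g \in O$ one then writes $\phi[O] = \phi(g)\,\phi[g^{-1}O]$ and reads off that $\phi[O]$ is open in $H$. So fix a neighborhood $W$ of $1_G$ and choose an open neighborhood $V$ of $1_G$ with $VV^{-1} \subseteq W$. The crucial observation is that, $\phi$ being now known to be continuous, $\phi[V]$ is the continuous image of an open (hence Borel) subset of a Polish space and is therefore analytic; analytic sets have the Baire property and so belong to $EBP(H)$.

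It remains to verify that $\phi[V]$ is not residually null, and this is where the extra hypothesis enters. As $G$ is separable, hence Lindel\"of, I would cover it by countably many left-translates $G = \bigcup_{n} g_n V$, so that $\phi[G] = \bigcup_n \phi(g_n)\,\phi[V]$. Since the residually null sets form a $\sigma$-ideal (Lemma \ref{lem:countable_additivity}) and $\phi[G]$ is assumed not residually null, some $\phi(g_n)\,\phi[V]$ must fail to be residually null; because left-translation by $\phi(g_n)$ is a homeomorphism of $H$ and residual nullity is a topological invariant (Proposition \ref{prop:TopologicalInvarianceOfRN}), $\phi[V]$ itself is not residually null. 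Now Theorem \ref{thm:R-setPettis}, applied to the EBP-set $\phi[V]$ in the Polish group $H$, yields that $\phi[V](\phi[V])^{-1}$ contains a neighborhood of $1_H$. Since $\phi$ is a homomorphism, $\phi[V](\phi[V])^{-1} = \phi[VV^{-1}] \subseteq \phi[W]$, so $\phi[W]$ contains a neighborhood of $1_H$, completing the openness argument.

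The individual steps are short; the one carrying the real content, and the step I expect to require the most care, is the identification of $\phi[V]$ as an EBP-set that is not residually null. It is exactly the interplay between continuity (to secure analyticity, hence the Baire property and membership in $EBP(H)$) and the $\sigma$-ideal and topological-invariance properties of residual nullity (to transport non-nullity from $\phi[G]$ down to $\phi[V]$) that makes the generalized Pettis theorem applicable.
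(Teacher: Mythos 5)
Your proposal is correct and follows essentially the same route as the paper's proof: continuity from Theorem \ref{thm:R-setContinuity}, then a countable cover $G=\bigcup_n g_nV$ combined with the $\sigma$-ideal property and topological invariance of residual nullity to show $\phi[V]$ is not residually null, analyticity of $\phi[V]$ (via continuity) to place it in $EBP(H)$, and Theorem \ref{thm:R-setPettis} to capture a neighborhood of $1_H$ inside $\phi[W]$, followed by translation to all points of $\phi[U]$. The only cosmetic differences are that you obtain the countable cover via Lindel\"of rather than a countable dense set and state the reduction to identity neighborhoods up front rather than at the end; neither affects the argument.
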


\begin{proof}
Immediately, by Theorem \ref{thm:R-setContinuity}, $\phi$ is continuous.
So suppose $\phi[G]$ is not residually null and fix an open set $W \subseteq G$ with $1_G \subseteq W$.
Let $V \subseteq G$ be open so that $1_G \in V$ and $V^{-1}V \subseteq W$.
Let $\{ g_n : n \in \omega \}$ be a dense subset of $G$ and notice that $G = \bigcup \{ g_n V : n \in \omega \}$.
It follows that
\[
	\phi[G] = \phi\left[  \bigcup \{ g_n V : n \in \omega \} \right] = \bigcup\{ \phi(g_n) \phi[V] : n \in \omega \}
\]
and, since $\phi[G]$ is not residually null, there is some $n \in \omega$ so that $\phi(g_n) \phi[V]$ is not residually null.
Hence, $\phi[V]$ is not residually null and, as the continuous image of an open set, $\phi[V]$ is also an analytic subset of $H$ so $\phi[V]$ is an EBP-set.
Thus, applying Theorem \ref{thm:R-setPettis}, we see that there exists some open $W' \subseteq H$ so that
\[
	1_H \in W' \subseteq (\phi[V])^{-1}\phi[V] = \phi[V^{-1}] \phi[V] \subseteq \phi[V^{-1} V] \subseteq \phi[W].
\]

Now, for any non-empty open set $U \subseteq G$, let $y \in \phi[U]$ and pick $x \in U$ so that $\phi(x) = y$.
Find an open set $W_x$ so that $1_G \in W_x$ and $x W_x \subseteq U$.
By the above paragraph, we can find an open set $V_y \subseteq H$ so that $1_H \in V_y \subseteq \phi[W_x]$.
Then
\[
	y \in y V_y \subseteq \phi(x) \phi[W_x] = \phi[ x W_x ] \subseteq \phi[U]
\]
which guarantees that
\[
	\phi[U] = \bigcup \{ yV_y : y \in \phi[U] \},
\]
establishing that $\phi[U]$ is open.
\end{proof}

\section{Martin's Axiom and Residually Null Sets}
\label{section:MA}

Recall that a topological space $X$ has countable cellularity provided that, for any family $\mathscr U$ of pair-wise disjoint open sets, $\#\mathscr U \leq \aleph_0$.
For a set $A$ and a cardinal $\kappa$, we will say that $A$ is $\kappa$-sized provided $\# A \leq \kappa$.

\begin{lemma} \label{lem:CountableCellularity}
Suppose $X$ is any topological space and $D \subseteq X$ be a dense subspace which is of countable cellularity with respect to its inherited topology.
Then $X$ is of countable cellularity.
\end{lemma}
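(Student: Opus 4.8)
The plan is to show that a dense subspace of countable cellularity forces the whole space to have countable cellularity by transferring any pairwise disjoint family of open sets in $X$ down to a corresponding pairwise disjoint family inside $D$, where we already know countability holds. The key observation is that intersecting with a dense set is injective on nonempty open sets and preserves disjointness.

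First I would let $\mathscr{U}$ be an arbitrary family of pairwise disjoint nonempty open subsets of $X$; without loss of generality every member is nonempty, since empty sets can be discarded without affecting the cardinality count. For each $U \in \mathscr{U}$, since $D$ is dense and $U$ is open and nonempty, the intersection $U \cap D$ is nonempty. Define a map $\Phi : \mathscr{U} \to \wp(D)$ by $\Phi(U) = U \cap D$. The heart of the argument is that $\{U \cap D : U \in \mathscr{U}\}$ is a family of pairwise disjoint nonempty open subsets of $D$ (in the subspace topology): disjointness is immediate because if $U, V \in \mathscr{U}$ are distinct then $U \cap V = \emptyset$, so $(U \cap D) \cap (V \cap D) \subseteq U \cap V = \emptyset$; each $U \cap D$ is relatively open in $D$ by definition of the subspace topology; and each is nonempty by density.

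Next I would argue that $\Phi$ is injective. If $U \neq V$ in $\mathscr{U}$, then $U \cap D$ and $V \cap D$ are disjoint and both nonempty, hence they are distinct subsets of $D$; this shows $\Phi(U) \neq \Phi(V)$. Therefore $\# \mathscr{U} = \#\Phi[\mathscr{U}]$. Since $\Phi[\mathscr{U}]$ is a pairwise disjoint family of nonempty open subsets of $D$, and $D$ has countable cellularity by hypothesis, we conclude $\#\Phi[\mathscr{U}] \leq \aleph_0$, whence $\#\mathscr{U} \leq \aleph_0$. As $\mathscr{U}$ was an arbitrary pairwise disjoint family of open sets in $X$, this establishes that $X$ has countable cellularity.

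I do not anticipate a serious obstacle here: the entire argument is a routine transfer of the cellularity condition across a dense embedding, and the only subtlety is remembering that density guarantees nonemptiness of the traces $U \cap D$, which is exactly what makes the map $\Phi$ injective and disjointness-preserving. The lemma is genuinely elementary, so the proof should be short; the main thing to get right is stating carefully that we may assume all members of $\mathscr{U}$ are nonempty and that disjointness in $X$ descends to disjointness in $D$.
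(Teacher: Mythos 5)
Your proof is correct and follows essentially the same route as the paper's: intersect a pairwise disjoint family of open sets in $X$ with the dense set $D$ to get a pairwise disjoint family of nonempty relatively open sets in $D$, then invoke countable cellularity of $D$. Your write-up is in fact slightly more careful than the paper's, since you make explicit the injectivity of $U \mapsto U \cap D$ and the harmless discarding of the empty set, both of which the paper leaves implicit.
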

\begin{proof}
Let $\mathscr U$ be a family of pair-wise disjoint open subsets of $X$ and define $\mathscr U_D = \{ U \cap D : U \in \mathscr U\}$.
Notice that $\mathscr U_D$ is a family of pair-wise disjoint open subsets of $D$ and that, since $D$ is dense in $X$, $U \cap D \neq \emptyset$ for each $U \in \mathscr U$.
As $D$ is assumed to be of countable cellularity, we see that $X$ is also of countable cellularity.
\end{proof}

The following is the topological equivalent to the classical Martin's Axiom.

\begin{definition}
For a cardinal $\kappa$, let $\text{MA}(\kappa)$ be the statement:
For any compact Hausdorff space $X$ with countable cellularity and for any $\kappa$-sized collection $\mathscr U$ of open dense subsets of $X$, $\bigcap \mathscr U \neq \emptyset$.
\end{definition}

In particular, $\text{MA}(\omega)$ is true without the condition of countable cellularity and is equivalent to the Baire Category Theorem.
Also, $\text{MA}(\mathfrak c)$ is false by looking at $[0,1]$ and, for each $x \in [0,1]$, defining $U_x = [0,1]\setminus\{x\}$.
Then the family $\mathscr U := \{ U_x : x \in [0,1] \}$ is a $\mathfrak c$-sized collection of open dense subsets but $\bigcap \mathscr U = \emptyset$.
From this, we see that Martin's Axiom is only non-trivial in models of $\neg\text{CH}$.
In general, we will refer to Martin's Axiom as the assertion that $\text{MA}(\kappa)$ holds for all cardinals $\kappa < \mathfrak c$.

Recall that a topological space $X$ is \v{C}ech-complete if $X$ admits a compactification $K$ in which $X$ is a $G_\delta$ subset of $K$.
In fact, if a topological space $X$ is \v{C}ech-complete, $X$ is actually a $G_\delta$ subset of any of its compactifications (\cite[Theorem 3.9.1]{Engelking}).

\begin{proposition} \label{prop:MA_equivalents}
For any cardinal $\kappa$, $\text{MA}(\kappa)$ is equivalent to the statement:
For any \v{C}ech-complete space $X$ of countable cellularity and any $\kappa$-sized family $\mathscr U$ of open dense sets, $\bigcap \mathscr U \neq \emptyset$;
\end{proposition}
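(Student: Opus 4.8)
The plan is to establish the equivalence by proving each direction. The forward direction (that $\mathrm{MA}(\kappa)$ implies the statement about \v{C}ech-complete spaces) is the interesting one; the reverse direction is immediate since every compact Hausdorff space is \v{C}ech-complete (it is trivially a $G_\delta$, indeed clopen, subset of itself), so the \v{C}ech-complete statement specializes directly to the compact Hausdorff statement defining $\mathrm{MA}(\kappa)$.

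For the forward direction, suppose $\mathrm{MA}(\kappa)$ holds and let $X$ be \v{C}ech-complete with countable cellularity, and let $\mathscr U$ be a $\kappa$-sized family of open dense subsets of $X$. First I would fix a compactification $K$ of $X$; by \v{C}ech-completeness and \cite[Theorem 3.9.1]{Engelking}, $X$ is a $G_\delta$ subset of $K$, so write $X = \bigcap\{G_n : n \in \omega\}$ where each $G_n \subseteq K$ is open. Since $X$ is dense in its compactification $K$ and $X$ has countable cellularity, Lemma \ref{lem:CountableCellularity} gives that $K$ has countable cellularity. Thus $K$ is a compact Hausdorff space of countable cellularity to which $\mathrm{MA}(\kappa)$ applies.

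The key step is to manufacture a suitable family of open dense subsets of $K$ from the data. For each $U \in \mathscr U$, choose an open set $\widetilde U \subseteq K$ with $\widetilde U \cap X = U$; since $U$ is dense in $X$ and $X$ is dense in $K$, the set $\widetilde U$ is dense in $K$ (its closure contains $\mathrm{cl}_K(U) \supseteq \mathrm{cl}_K(X) = K$). This produces a $\kappa$-sized family $\{\widetilde U : U \in \mathscr U\}$ of open dense subsets of $K$. I would then adjoin the countably many open dense sets $G_n$, which are dense in $K$ because they contain the dense set $X$. The combined family has cardinality at most $\kappa \cdot \aleph_0 = \kappa$ (using $\kappa \geq \aleph_0$, which we may assume since otherwise $\mathrm{MA}(\kappa)$ is vacuous or trivial), so $\mathrm{MA}(\kappa)$ yields a point $p \in \bigcap\{\widetilde U : U \in \mathscr U\} \cap \bigcap\{G_n : n \in \omega\}$.

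Finally I would check that this $p$ does the job. Since $p \in \bigcap_n G_n = X$, the point $p$ lies in $X$, and for each $U \in \mathscr U$ we have $p \in \widetilde U \cap X = U$. Hence $p \in \bigcap \mathscr U$, so $\bigcap \mathscr U \neq \emptyset$, as required. The main obstacle is the verification that the lifted sets $\widetilde U$ remain dense in $K$ and the bookkeeping that the enlarged family stays $\kappa$-sized; both are routine once the density transfers through $X \subseteq K$ are set up correctly, and the genuine content is simply transporting the problem to the compact space $K$ via the $G_\delta$ representation of $X$ and Lemma \ref{lem:CountableCellularity}.
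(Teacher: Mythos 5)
Your proposal is correct and follows essentially the same route as the paper's own proof: pass to a compactification $K$, use Lemma \ref{lem:CountableCellularity} to transfer countable cellularity, lift each dense open set to $K$, adjoin the countably many dense open sets witnessing that $X$ is a $G_\delta$ in $K$, and apply $\text{MA}(\kappa)$ in $K$. The only cosmetic difference is that you verify the density of the lifted sets and of the $G_\delta$ witnesses explicitly, which the paper asserts more briefly.
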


\begin{proof}
Since any compact Hausdorff space $X$ is trivially \v{C}ech-complete, we need only see that $\text{MA}(\kappa)$ implies the proposed equivalent.

Suppose $X$ is a \v{C}ech-complete space of countable cellularity and let $\mathscr U$ be a $\kappa$-sized family of open dense sets where $\kappa < \mathfrak c$.
Choose a compactification $K$ of $X$ and notice that, by Lemma \ref{lem:CountableCellularity}, $K$ is also of countable cellularity.
Then, for each $U \in \mathscr U$, let $V_U \subseteq K$ be open so that $X \cap V_U = U$ and define $\mathscr V = \{ V_U : U \in \mathscr U \}$.
As each $U \in \mathscr U$ is dense in $X$ and $X$ is dense in $K$, we see that each $V_U$ is an open dense subset of $K$.

Now, as $X$ is a dense $G_\delta$ in $K$, let $\mathscr W$ be a countable family of open dense subsets of $K$ so that $\bigcap \mathscr W = X$.
As long as $\kappa$ is an infinite cardinal, $\mathscr W \cup \mathscr V$ is a $\kappa$-sized collection of open sets so we can apply $\text{MA}(\kappa)$ to see that
\[
	\emptyset
	\neq \bigcap \mathscr W \cap \bigcap \mathscr V
	= X \cap \bigcap \mathscr V
	= \bigcap \mathscr U,
\]
finishing the proof.
\end{proof}

Let $X$ be Polish and recall that $X$ can be homeomorphically embedded into $[0,1]^\omega$.
As $[0,1]^\omega$ is also Polish, Mazurkiewicz' Theorem guarantees that $X$ is \v{C}ech-complete and of countable singularity as $X$ is separable.

\begin{corollary} \label{cor:kappa_meager}
Let $X$ be a Polish space and $\mathscr E$ be a $\kappa$-sized family of meager sets where $\kappa < \mathfrak c$.
Assuming Martin's Axiom, $\bigcup \mathscr E$ is meager.
\end{corollary}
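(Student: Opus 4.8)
The plan is to reduce the statement to the following combinatorial core: given $\kappa<\mathfrak c$ closed nowhere dense sets, cover their union by countably many closed nowhere dense sets. First I would dispose of the easy cases: if $\kappa$ is finite the union is a finite, hence countable, union of nowhere dense sets, and if $\kappa=\aleph_0$ the conclusion is immediate from the definition of meagerness; so assume $\aleph_1\leq\kappa<\mathfrak c$, which is exactly the range in which $\mathrm{MA}(\kappa)$ is available. Next, writing each $E\in\mathscr E$ as $E\subseteq\bigcup_{n}N^E_n$ with each $N^E_n$ nowhere dense and replacing $N^E_n$ by its closure, I obtain a family $\{F_i:i<\kappa\}$ of closed nowhere dense sets (of size $\kappa\cdot\aleph_0=\kappa$) with $\bigcup\mathscr E\subseteq\bigcup_{i<\kappa}F_i$. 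It therefore suffices to produce dense open sets $W_k\subseteq X$ ($k\in\omega$) such that $\bigcup_{i<\kappa}F_i\subseteq\bigcup_{k\in\omega}(X\setminus W_k)$, since each $X\setminus W_k$ is then closed and nowhere dense and the right-hand side is meager.

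To build the $W_k$ I would force with the following partial order $\mathbb P$. Fix a countable base $\mathcal B$ of nonempty open subsets of $X$. A condition is a triple $p=(\langle a^p_k:k<n_p\rangle,\,F_p,\,\ell_p)$ where $n_p\in\omega$, each $a^p_k$ is a finite subset of $\mathcal B$ (a finite approximation to $W_k$), $F_p\in[\kappa]^{<\omega}$ is a finite set of commitments, and $\ell_p\colon F_p\to\{0,\dots,n_p\}$ is a stamp recording the stage at which each commitment was made, subject to the consistency requirement that $B\cap F_i=\emptyset$ whenever $i\in F_p$, $\ell_p(i)\leq k<n_p$, and $B\in a^p_k$. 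A condition $q$ extends $p$ when $n_q\geq n_p$, $a^q_k\supseteq a^p_k$ for $k<n_p$, $F_q\supseteq F_p$, and $\ell_q\supseteq\ell_p$. Reading off $W_k=\bigcup\{\,\bigcup a^p_k:p\in G\,\}$ from a sufficiently generic filter $G$, the $W_k$ are open; they are dense provided $G$ meets, for each $k$ and each basic $B_j\in\mathcal B$, the set $D_{k,j}$ of conditions placing into $a_k$ some basic set contained in $B_j$, and provided $G$ meets the sets $E_m=\{p:n_p\geq m\}$; and $\bigcup_{i}F_i$ is covered provided $G$ meets, for each $i<\kappa$, the set $C_i=\{p:i\in F_p\}$, because then some $p\in G$ has $\ell_p(i)=:k_i$ and every basic set added to $a_{k_i}$ avoids $F_i$, whence $F_i\subseteq X\setminus W_{k_i}$.

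The verification that these sets are dense is where the nowhere-density of the $F_i$ and the stamping device $\ell$ do their work: to meet $C_i$ I simply adjoin $i$ to $F_p$ with stamp $\ell_p(i)=n_p$, which constrains no existing coordinate and is therefore always legal; and to meet $D_{k,j}$ I need a basic set $B\subseteq B_j$ disjoint from the finitely many committed sets $F_i$ with $\ell(i)\leq k$, which exists because a finite union of nowhere dense sets is nowhere dense and so cannot contain $B_j$. Altogether there are only $\aleph_0$ density requirements $D_{k,j},E_m$ together with $\kappa$ covering requirements $C_i$, i.e.\ $\kappa$ dense sets in all. Applying the combinatorial form of Martin's Axiom for ccc posets --- which follows from the stated $\mathrm{MA}(\kappa)$ by passing to the Stone space of the Boolean completion of $\mathbb P$, a compact Hausdorff space whose countable cellularity is exactly the countable chain condition on $\mathbb P$ and in which each dense subset of $\mathbb P$ induces a dense open set --- yields a filter $G$ meeting all $\kappa$ of them, and the resulting $W_k$ complete the proof.

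I expect the principal obstacle to be verifying that $\mathbb P$ satisfies the countable chain condition, so that the stated topological Martin's Axiom actually applies. This I would handle with a $\Delta$-system argument: in an uncountable set of conditions, uncountably many share a single approximation part $\langle a_k:k<n\rangle$ (there are only countably many such), the corresponding finite commitment sets $F_\xi$ may be thinned to a $\Delta$-system with root $\rho$, and since each $\ell_\xi\restriction\rho$ is one of only finitely many functions $\rho\to\{0,\dots,n\}$, uncountably many conditions agree on $\rho$; any two of these have a common extension obtained by amalgamating their commitment data, so they are compatible. A secondary point to get right is the bookkeeping of the stamp $\ell$: it must bind only the coordinates $k\geq\ell(i)$ created after $i$ is committed, which is precisely what makes both the covering requirements satisfiable and the density of each $W_k$ stable under later commitments.
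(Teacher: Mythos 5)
Your proof is correct, but it is worth being clear about how it relates to the paper: the paper gives no argument at all for Corollary \ref{cor:kappa_meager}, presenting it as an immediate consequence of Proposition \ref{prop:MA_equivalents} together with the observation that a Polish space is \v{C}ech-complete and of countable cellularity. That implicit derivation is thinner than it looks. Applying Proposition \ref{prop:MA_equivalents} to $X$ itself, or to each basic open subspace of $X$, only shows that $X \setminus \bigcup \mathscr E$ is dense --- a covering-number statement, which is strictly weaker than meagerness of $\bigcup \mathscr E$ (the irrationals have dense complement in $\mathbb R$ but are comeager). What the additivity statement actually requires is exactly the machinery you build: the finite-approximation poset with stamped commitments, the $\Delta$-system verification of the countable chain condition, and the application of the paper's topological $\text{MA}(\kappa)$ not to $X$ but to an auxiliary compact Hausdorff space (the Stone space of the completion of $\mathbb P$). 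This is essentially the classical argument, due in this form to the standard treatments of Martin's Axiom (it is the category analogue of the result the paper cites from \cite{Kunen2011} as Theorem \ref{thm:kappa_null}), and your write-up of the poset, the density of the sets $C_i$, $D_{k,j}$, $E_m$, and the ccc check are all sound. In short: the paper's route is a one-line gloss that, taken literally, proves less; your route is self-contained and proves the stated result.

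One step should be tightened. Your bridge from the topological axiom to the poset form asserts that a point $x$ in the intersection of the induced dense open subsets of the Stone space "yields a filter $G$" on $\mathbb P$. What it directly yields is $G=\{p \in \mathbb P : e(p) \in x\}$ (with $e$ the canonical dense embedding into the completion), which is upward closed, meets every prescribed dense set, and is pairwise compatible --- but is not automatically a filter, since a common extension of two members of $G$ need not itself lie in $G$. The standard repairs are either to observe that $\#\mathbb P = \kappa$ and throw in the $\kappa$-many additional dense sets $D_{p,q}$ of conditions deciding compatibility of each pair, or simply to cite the known equivalence of the topological and combinatorial forms of MA. In your specific argument, however, no repair is needed: directedness of $G$ is used only in the covering step, where, given $p \in G$ committing $i$ and $q \in G$ contributing a basic set $B$ to coordinate $k_i$, you pass to a common extension $r \leq p,q$ --- and you use only that $r$ exists and is a consistent condition, never that $r \in G$. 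So pairwise compatibility suffices throughout, and your proof stands once "filter" is weakened to "linked, upward-closed set meeting the given dense sets."
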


\begin{theorem}[{\cite[Exercise III.3.30]{Kunen2011}}] \label{thm:kappa_null}
Let $X$ be a Polish space, $\mu$ be a Borel probability measure on $X$, and $\mathscr E$ be a $\kappa$-sized family of $\mu$-null sets where $\kappa < \mathfrak c$.
Assuming Martin's Axiom, $\bigcup \mathscr E$ is $\mu$-null.
\end{theorem}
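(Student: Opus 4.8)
The plan is to reduce the theorem to a single covering statement and then extract the required cover from a filter furnished by Martin's Axiom. Since being $\mu$-null means having $\mu$-outer measure zero (equivalently, lying in a Borel null set), it suffices to produce, for each $\varepsilon > 0$, an \emph{open} set $W \supseteq \bigcup \mathscr E$ with $\mu(W) \le \varepsilon$; intersecting the sets obtained for $\varepsilon = 1/n$ yields a $G_\delta$ set of measure zero containing $\bigcup\mathscr E$, whence $\mu^\ast(\bigcup\mathscr E) = 0$. Writing $\kappa = \#\mathscr E < \mathfrak c$, note that each $E \in \mathscr E$ has $\mu^\ast(E) = 0$, so every $E$ is contained in open sets of arbitrarily small measure.

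Fix $\varepsilon \in (0,1)$ and consider the amoeba poset $\mathbb A_\varepsilon$ whose conditions are the open sets $U \subseteq X$ with $\mu(U) < \varepsilon$, ordered by reverse inclusion (so $U' \le U$ iff $U' \supseteq U$). For each $E \in \mathscr E$ the set $D_E = \{U \in \mathbb A_\varepsilon : E \subseteq U\}$ is dense: given $U \in \mathbb A_\varepsilon$, choose an open $V \supseteq E$ with $\mu(V) < \varepsilon - \mu(U)$, so that $U \cup V \in D_E$ refines $U$. There are $\kappa$ such dense sets. The plan is to apply Martin's Axiom to obtain a filter $G$ on $\mathbb A_\varepsilon$ meeting every $D_E$; concretely, I would pass to the Stone space $S$ of the regular open (Boolean) completion of $\mathbb A_\varepsilon$, which is compact Hausdorff and---precisely because $\mathbb A_\varepsilon$ is ccc---has countable cellularity, translate each $D_E$ into the open dense set $\bigcup_{U \in D_E} N_U \subseteq S$, and invoke $\mathrm{MA}(\kappa)$ (equivalently, Proposition \ref{prop:MA_equivalents}) to obtain a point of $S$, that is, a filter $G \subseteq \mathbb A_\varepsilon$ meeting all the $D_E$. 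Setting $W = \bigcup\{U : U \in G\}$, each $E \in \mathscr E$ lies in some member of $G$, so $\bigcup\mathscr E \subseteq W$; and since $G$ is a filter, $\{U : U \in G\}$ is upward directed under inclusion, so any compact $K \subseteq W$ is covered by a single $U \in G$, giving $\mu(K) < \varepsilon$. Inner regularity of the Radon measure $\mu$ then yields $\mu(W) \le \varepsilon$, as required.

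The heart of the argument---and the step I expect to be the main obstacle---is verifying that $\mathbb A_\varepsilon$ has the countable chain condition, since everything else is bookkeeping. Two conditions $U, U'$ are compatible exactly when $\mu(U \cup U') < \varepsilon$, so an antichain would be an uncountable family of open sets of measure $< \varepsilon$ whose pairwise unions all have measure $\ge \varepsilon$. I would rule this out by a double pigeonhole: first pass to an uncountable subfamily on which $\mu(U) < \varepsilon - \delta$ for a fixed rational $\delta > 0$; then, using that every open set is a countable increasing union of finite unions of basic open sets from a fixed countable base of the Polish space $X$, together with continuity from below, approximate each $U$ from inside by such a finite union $U'$ with $\mu(U \setminus U') < \delta/3$. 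Since there are only countably many such finite unions, an uncountable subfamily shares one common $U'$, and any two members $U_1, U_2$ of it satisfy $\mu(U_1 \cup U_2) \le \mu(U') + \mu(U_1 \setminus U') + \mu(U_2 \setminus U') < (\varepsilon-\delta) + \delta/3 + \delta/3 < \varepsilon$, contradicting incompatibility. A secondary technical point is the passage from the topological form of Martin's Axiom to its formulation for the ccc poset $\mathbb A_\varepsilon$; this is the standard Stone-space translation indicated above, in which ccc-ness of the poset is exactly what supplies the countable cellularity hypothesis of $\mathrm{MA}(\kappa)$.
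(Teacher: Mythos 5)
The paper offers no proof of this statement at all---it is stated with a bare citation to Kunen's Exercise III.3.30---so your proposal is effectively being compared against the standard solution of that exercise rather than against anything in the text. Your argument \emph{is} that standard solution: the amoeba poset $\mathbb A_\varepsilon$ of open sets of measure $<\varepsilon$ under reverse inclusion. Its core steps are sound: the reduction to producing, for each $\varepsilon>0$, an open $W \supseteq \bigcup\mathscr E$ with $\mu(W)\leq\varepsilon$; the density of each $D_E$ (using outer regularity of $\mu$, so a set of outer measure zero lies in open sets of arbitrarily small measure); the ccc verification by the double pigeonhole (fix a rational $\delta$ with $\mu(U)<\varepsilon-\delta$ on an uncountable subfamily, then approximate from inside by finite unions of basic open sets); and the final covering step via tightness of $\mu$. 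This fills a real hole, since the paper states MA only in its topological form, and Corollary \ref{cor:kappa_meager} alone does not yield the measure statement.

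The one step that is wrong as written is the claim that a point of the Stone space $S$ of $\mathrm{RO}(\mathbb A_\varepsilon)$ ``is'' a filter $G\subseteq \mathbb A_\varepsilon$ meeting all the $D_E$. A point of $S$ is an ultrafilter $\mathcal U_x$ on the regular open completion; its pullback $G=\{U \in \mathbb A_\varepsilon : e(U)\in\mathcal U_x\}$ along the canonical dense embedding $e$ is upward closed and pairwise compatible, but it need not be directed, because $\mathbb A_\varepsilon$ is not separative (two conditions whose symmetric difference is $\mu$-null are compatible with exactly the same conditions), so $e$ is not an order embedding. Directedness is precisely what your covering step uses: a compact $K\subseteq W$ is covered by finitely many members of $G$, and you need a single condition containing their union. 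Two repairs are available. (i) Quote the standard equivalence of topological $\mathrm{MA}(\kappa)$ with $\mathrm{MA}(\kappa)$ for ccc posets; note, however, that its textbook proof first replaces the poset by a $\kappa$-sized subposet (Kunen's Lemma III.3.15) and only then adjoins the dense sets forcing directedness---one cannot do this naively on $\mathbb A_\varepsilon$ itself, which has $\mathfrak c$-many pairs of conditions. (ii) Argue directly for this poset: if $e(U_1),\dots,e(U_n)$ lie in a common ultrafilter, their meet is nonzero, so by density of $e[\mathbb A_\varepsilon]$ there is a condition $r$ every extension of which is compatible with each $U_i$; inductively $r\cup U_1\cup\dots\cup U_i$ is then a condition, whence $\mu(U_1\cup\dots\cup U_n)<\varepsilon$. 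Since $W$ is the increasing union of such finite unions, continuity from below gives $\mu(W)\leq\varepsilon$ without ever producing an honest filter. With either repair your proof is complete.
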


\begin{theorem} \label{thm:MAForRN}
Let $X$ be a Polish space and $\mathscr E$ be a $\kappa$-sized family of residually null sets where $\kappa < \mathfrak c$.
Assuming Martin's Axiom, $\bigcup \mathscr E$ is residually null.
\end{theorem}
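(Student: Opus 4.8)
The plan is to reduce everything to the two Martin's-Axiom consequences already recorded, namely Corollary~\ref{cor:kappa_meager} (a $\kappa$-sized union of meager sets in a Polish space is meager) and Theorem~\ref{thm:kappa_null} (a $\kappa$-sized union of $\mu$-null sets is $\mu$-null), applied in two different spaces. The central difficulty is that the annihilator does not commute with uncountable unions: Lemma~\ref{lem:countable_additivity} gives $\mathcal N(\bigcup\mathscr E)=\bigcap_{E\in\mathscr E}\mathcal N(E)$ only for countable $\mathscr E$, and for an uncountable family only the inclusion $\mathcal N(\bigcup\mathscr E)\subseteq\bigcap_{E\in\mathscr E}\mathcal N(E)$ survives by monotonicity. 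So I cannot simply intersect the annihilators; instead I will show that the set $G:=\bigcap_{E\in\mathscr E}\mathcal N(E)$ is co-meager and is itself contained in $\mathcal N(\bigcup\mathscr E)$, and each of these two facts will consume one application of Martin's Axiom.

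First I would establish that $G$ is co-meager in $\mathcal M(X)$. For each $E\in\mathscr E$ the set $E$ is residually null, so $\mathcal N(E)$ is co-meager and its complement $\mathcal M(X)\setminus\mathcal N(E)$ is meager. The family $\{\mathcal M(X)\setminus\mathcal N(E):E\in\mathscr E\}$ is $\kappa$-sized with $\kappa<\mathfrak c$, and $\mathcal M(X)$ is itself Polish, so Corollary~\ref{cor:kappa_meager} applied \emph{in the space $\mathcal M(X)$} yields that $\bigcup_{E\in\mathscr E}\bigl(\mathcal M(X)\setminus\mathcal N(E)\bigr)$ is meager. Taking complements, $G=\bigcap_{E\in\mathscr E}\mathcal N(E)$ is co-meager.

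Next I would show $G\subseteq\mathcal N(\bigcup\mathscr E)$, which is where Theorem~\ref{thm:kappa_null} enters. Fix $\mu\in G$. Then $\mu^\ast(E)=0$ for every $E\in\mathscr E$, so $\mathscr E$ is a $\kappa$-sized family of $\mu$-null sets; since $\kappa<\mathfrak c$, Theorem~\ref{thm:kappa_null} gives that $\bigcup\mathscr E$ is $\mu$-null, i.e.\ $\mu^\ast(\bigcup\mathscr E)=0$ and hence $\mu\in\mathcal N(\bigcup\mathscr E)$. As $\mu\in G$ was arbitrary, $G\subseteq\mathcal N(\bigcup\mathscr E)$; since $G$ is co-meager, so is the larger set $\mathcal N(\bigcup\mathscr E)$, which is precisely the assertion that $\bigcup\mathscr E$ is residually null.

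The essential point, and the only genuine obstacle, is exactly this decoupling: co-meagerness of the intersection of the annihilators is a statement about meager unions in the measure space $\mathcal M(X)$, while the pointwise nullity $\mu^\ast(\bigcup\mathscr E)=0$ is a statement about null unions in $X$, and Martin's Axiom is what lets one surpass the mere countable additivity of Lemma~\ref{lem:countable_additivity} in both places. For finite or countable $\mathscr E$ no appeal to Martin's Axiom is needed, since Lemma~\ref{lem:countable_additivity} already delivers the conclusion, so the entire content of Theorem~\ref{thm:MAForRN} lies in the uncountable case handled above.
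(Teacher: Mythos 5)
Your proposal is correct and takes essentially the same route as the paper's own proof: you apply Corollary~\ref{cor:kappa_meager} in the Polish space $\mathcal M(X)$ to make $\bigcap\{\mathcal N(E) : E \in \mathscr E\}$ co-meager, and then apply Theorem~\ref{thm:kappa_null} to each fixed $\mu$ in that intersection to conclude $\bigcap\{\mathcal N(E) : E \in \mathscr E\} \subseteq \mathcal N\left(\bigcup \mathscr E\right)$. The only difference is expository: you spell out explicitly why Lemma~\ref{lem:countable_additivity} fails for uncountable families and why only one inclusion survives, a point the paper leaves implicit.
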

\begin{proof}
Let $\mathscr E = \{ E_\xi : \xi < \kappa \}$ be a collection of residually null sets and notice that, by Corollary \ref{cor:kappa_meager}, $\bigcap \{ N(E_\xi) : \xi < \kappa \}$ is co-meager in $\mathcal M(X)$.
Let $\mu \in \bigcap \{ N(E_\xi) : \xi < \kappa \}$ and notice that, by Theorem \ref{thm:kappa_null}, $\mu(\bigcup \mathscr E) = 0$.
Hence, $\bigcap \{ N(E_\xi) : \xi < \kappa \} \subseteq N(\bigcup \mathscr E)$ which affirms that $\bigcup \mathscr E$ is residually null.
\end{proof}

\begin{corollary}
Let $X$ be a Polish space and $\mathscr E$ be a $\kappa$-sized family of EBP-sets where $\kappa < \mathfrak c$.
Assuming Martin's Axiom, $\bigcup \mathscr E$ is an EBP-set.
\end{corollary}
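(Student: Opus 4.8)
The plan is to reduce the statement to Theorem \ref{thm:MAForRN} on unions of residually null sets. Write $\mathscr E = \{ A_\xi : \xi < \kappa \}$. Since each $A_\xi$ is an EBP-set, the definition furnishes an open set $U_\xi \subseteq X$ with $A_\xi \triangle U_\xi$ residually null. The first observation is that $U := \bigcup\{ U_\xi : \xi < \kappa\}$ is open, since an arbitrary union of open sets is open regardless of the cardinality of the index set.

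Next I would compare $\bigcup \mathscr E$ with $U$ via symmetric differences. Using the inclusion $\left[\bigcup\{A_\xi : \xi < \kappa\}\right] \triangle \left[\bigcup\{U_\xi : \xi < \kappa\}\right] \subseteq \bigcup\{ A_\xi \triangle U_\xi : \xi < \kappa\}$ supplied by the symmetric-difference lemma (its fourth clause, stated for unions indexed by an arbitrary cardinal $\kappa$), the set $\left(\bigcup\mathscr E\right)\triangle U$ is contained in a $\kappa$-sized union of residually null sets. Invoking Theorem \ref{thm:MAForRN}, which is exactly where Martin's Axiom enters, this union is itself residually null.

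Finally, I would use monotonicity of the annihilator (Lemma \ref{lem:countable_additivity}) to conclude that a subset of a residually null set is residually null: if $C \subseteq D$ with $D$ residually null, then $\mathcal N(D) \subseteq \mathcal N(C)$, and a superset of a co-meager set is co-meager, so $\mathcal N(C)$ is co-meager. Hence $\left(\bigcup\mathscr E\right)\triangle U$ is residually null, and since $U$ is open this exhibits $\bigcup\mathscr E$ as an EBP-set directly from the definition.

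The argument is short because the substantive work has been done upstream: the only genuine obstacle lives entirely inside Theorem \ref{thm:MAForRN}, whose proof deploys Martin's Axiom through Corollary \ref{cor:kappa_meager} and Theorem \ref{thm:kappa_null} to control $\kappa$-sized unions of meager sets and of $\mu$-null sets simultaneously. Granting that theorem, the remaining steps are the routine facts that unions of open sets are open and that the residually null ideal is downward closed, so no new difficulty arises at this stage.
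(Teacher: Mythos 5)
Your proposal is correct and follows essentially the same route as the paper: choose open witnesses $U_\xi$, bound $\left[\bigcup_\xi A_\xi\right] \triangle \left[\bigcup_\xi U_\xi\right]$ by $\bigcup_\xi (A_\xi \triangle U_\xi)$ via the symmetric-difference lemma, and invoke Theorem \ref{thm:MAForRN}. The only difference is that you spell out the routine closing steps (openness of the union and downward closure of the residually null ideal) that the paper leaves implicit.
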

\begin{proof}
Let $\mathscr E = \{ E_\xi : \xi < \kappa \}$ be a collection of EBP-sets and pick $U_\xi$ open so that $E_\xi \triangle U_\xi$ is residually null for each $\xi < \kappa$.
Then, observe that
\[
	\left[ \bigcup\{E_\xi : \xi < \kappa \} \right] \triangle \left[ \bigcup\{U_\xi : \xi < \kappa \} \right]
	\subseteq \bigcup \{ E_\xi \triangle U_\xi : \xi < \kappa \}
\]
which, by appealing to Theorem \ref{thm:MAForRN}, completes the proof.
\end{proof}

Recall that the density of a topological space $X$ is the least cardinal $\kappa$ so that there exists a $\kappa$-sized dense subset of $X$.
We will let $\text{den}(X)$ denote the density of $X$.

\begin{corollary}
Let $G$ be a Polish group, $H$ be a topological group with $\text{den}(H) < \mathfrak c$, and $\phi : G \mapsto H$ be a homomorphism which is EBP-measurable.
Assuming Martin's Axiom, $\phi$ is continuous.
\end{corollary}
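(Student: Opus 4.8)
The plan is to run the proof of Theorem \ref{thm:R-setContinuity} essentially verbatim, with the single structural change that the countable dense subset of $H$ is replaced by a $\kappa$-sized one, where $\kappa = \text{den}(H) < \mathfrak c$, and the place where countability of the resulting cover was exploited is replaced by an appeal to Theorem \ref{thm:MAForRN}, whose Martin's Axiom hypothesis is exactly what is assumed here. As before, it suffices to prove continuity of $\phi$ at $1_G$, since $\phi$ is a homomorphism of topological groups.

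So I would fix an open neighborhood $U$ of $1_H$, choose an open neighborhood $V$ of $1_H$ with $V^{-1}V \subseteq U$, and fix a dense subset $\{ h_\xi : \xi < \kappa\}$ of $H$. Exactly as in the separable case, for each $h \in H$ the set $hV^{-1}$ is a neighborhood of $h$, so density yields $\xi < \kappa$ and $p \in V^{-1}$ with $h_\xi = hp$, giving $h \in h_\xi V$; thus $H = \bigcup\{h_\xi V : \xi < \kappa\}$ and consequently $G = \bigcup\{\phi^{-1}[h_\xi V] : \xi < \kappa\}$. Each $h_\xi V$ is open (it is a translate of an open set), so each $\phi^{-1}[h_\xi V]$ is an EBP-set by EBP-measurability of $\phi$.

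The key step---and the only genuine deviation from the separable argument---is to extract one index $\xi$ for which $\phi^{-1}[h_\xi V]$ is not residually null. Here $G$, being a non-empty open subset of itself, is not residually null by Lemma \ref{lem:open_positive_dense}; were every $\phi^{-1}[h_\xi V]$ residually null, then since $\kappa < \mathfrak c$ and Martin's Axiom holds, Theorem \ref{thm:MAForRN} would force their union $G$ to be residually null, a contradiction. This is where I expect the real content to sit: in the separable case a countable union of residually null sets is residually null for free, whereas here one must pay with Martin's Axiom through Theorem \ref{thm:MAForRN}. With such a $\xi$ in hand, I would finish as in Theorem \ref{thm:R-setContinuity}: Pettis' generalization (Theorem \ref{thm:R-setPettis}) gives that $(\phi^{-1}[h_\xi V])^{-1}\phi^{-1}[h_\xi V]$ contains a neighborhood of $1_G$, and the inclusions
\[
(\phi^{-1}[h_\xi V])^{-1}\phi^{-1}[h_\xi V] \subseteq \phi^{-1}\bigl[(h_\xi V)^{-1} h_\xi V\bigr] = \phi^{-1}[V^{-1}V] \subseteq \phi^{-1}[U]
\]
show $\phi^{-1}[U]$ contains a neighborhood of $1_G$, establishing continuity at $1_G$ and hence everywhere.
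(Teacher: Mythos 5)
Your proposal is correct and follows exactly the paper's approach: the paper's proof likewise instructs to rerun the argument of Theorem \ref{thm:R-setContinuity} with the countable dense set replaced by a $\kappa$-sized dense set $\{h_\xi : \xi < \kappa\}$, and then invokes Theorem \ref{thm:MAForRN} to produce some $\xi$ with $\phi^{-1}[h_\xi V]$ not residually null. Your write-up simply fills in the details that the paper leaves implicit (the covering argument, the contradiction via Lemma \ref{lem:open_positive_dense}, and the final appeal to Theorem \ref{thm:R-setPettis}), all of which are carried out correctly.
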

\begin{proof}
In the proof of Theorem \ref{thm:R-setContinuity}, replace the dense set $\{ h_n : n \in \omega\}$ with a dense set $\{ h_\xi : \xi < \kappa \}$.
Then use Theorem \ref{thm:MAForRN} to see that there has to be some $\xi < \kappa$ so that $\phi^{-1}[h_\xi V]$ is not residually null.
\end{proof}

\providecommand{\bysame}{\leavevmode\hbox to3em{\hrulefill}\thinspace}
\providecommand{\MR}{\relax\ifhmode\unskip\space\fi MR }
\providecommand{\MRhref}[2]{%
  \href{http://www.ams.org/mathscinet-getitem?mr=#1}{#2}
}
\providecommand{\href}[2]{#2}

\end{document}